\definecolor{Maroon}{HTML}{ad2231}
\definecolor{webgreen}{HTML}{008000}
\newtheorem{corollary}{Corollary}
\newtheorem{proposition}{Proposition}
\newtheorem{lemma}{Lemma}
\newtheorem{remark}{Remark}
\newtheorem{theorem}{Theorem}
\newtheorem{example}{Example}
\newtheorem{conj}{Conjecture}
\theoremstyle{definition}
\newtheorem*{general*}{General assumption}
\newcommand{\Gabriel}[1]{\todo[color=orange!70,inline]{Gabriel: #1}}
\begin{document}
\title{Branching processes with pairwise interactions}
\author{Gabriel Berzunza Ojeda\footnote{ {\sc Department of Mathematical Sciences, University of Liverpool, United Kingdom.} E-mail: gabriel.berzunza-ojeda@liverpool.ac.uk}\, \, and \, \, Juan Carlos Pardo \footnote{ {\sc Centro de Investigaci\'on en Matem\'aticas A.C., Mexico.} E-mail: jcpardo@cimat.mx}}

\maketitle

\vspace{0.1in}

\begin{abstract} 
In this manuscript, we are interested in the long-term behaviour of branching processes  with pairwise interactions (BPI-processes).  A process in this class behaves as a pure branching process with the difference that competition and cooperation events between pairs of individuals are also allowed. Here, we provide a series of integral tests that explain how  competition and cooperation regulate the long-term behaviour of BPI-processes. In particular, such integral tests describe the events of explosion and  extinction; and provide conditions under which the process comes down from infinity. Moreover, we also determine whether the process admits, or not,  a stationary distribution. Our arguments use a random time change representation in terms of a modified branching process with immigration and moment duality. The moment dual of BPI-processes  turns out to be a family of diffusions taking values on $[0,1]$ which are interesting in their own right  and that we introduce as generalised Wright-Fisher diffusions. \\

\noindent {\sc Key words and phrases}: Branching processes with interactions, moment duality, generalised Wright-Fisher diffusions, Lamperti transform, explosion, extinction, stability, coming down from infinity.

\bigskip

\noindent MSC 2020 subject classifications: 60J80, 60K35.
\end{abstract}

\section{Introduction and main results}

A branching process or Bienaym\'e-Galton-Watson process (BGW) is a continuous-time and discrete state-space Markov chain where particles or individuals (of the same nature) give birth independently at some constant rate to a random number of offspring or die independently at constant rate  (see for e.g., the monographs of  Harris \cite{Harris2002} or Athreya and Ney \cite{Ath2004} for further details). Besides their elegance and tractability, it is  well-known  that branching processes are offset by the difficulty of predicting that the population will, with probability one, either die out or grow without bound. On the event of dying out, the process is absorbed at $\{0\}$ in finite time. On the other hand, if the process does not die out, it can grow indefinitely or be absorbed at $\{\infty\}$ in finite time. The latter event is known as explosion and occurs typically when the process performs infinitely many large jumps in  finite time with positive probability.
 
In order to circumvent various unrealistic properties of pure branching processes, as their degenerate long-term behaviour, several authors have introduced generalisations of these processes by incorporating  interactions between individuals. Following the studies of Kalinkin \cite{Ka82, Ka99, Ka01,Ka02, Ka02-1}, Lambert \cite{La2005} (see also Jagers \cite{Jagers1994}) and the recent manuscript of Gonz\'alez Casanova et al.\ \cite{Pa2017}, one approach consists in generalise the birth and death rates by considering polynomial rates as functions of the population size  that can be interpreted as a different type of interactions between individuals. For example, {\sl competition} pressure  and {\sl cooperation} which are ubiquitous fundamental mechanisms in biology  in various space and time scales, and thus deserve special attention. It is important to mention that continuous-time BGW processes with polynomial rates similar to the one we focus on here (but not the same) can also be found in Chen \cite{Chen1997} and Pakes \cite{Pakes2007}. The authors in these papers make stronger integrability assumptions on the offspring distributions than the ones we consider here. Moreover, the approach we develop and the results we obtain are quite different.

Formally speaking, we study continuous-time and discrete state-space Markov chains, in which {\sl natural births} and {\sl natural deaths} both occur at rates proportional to  current population size (as in the continuous-time BGW process) but  also  we consider additional birth and death events, due to cooperation and competition, that occur at  rates proportional to the square of the population size. We call these Markov chains branching processes with pairwise interaction  (or BPI-processes for short). More precisely, consider the parameters $c, d \geq 0,  (b_{i})_{i \geq 1}$ and $(\pi_{i})_{i \geq 1}  $ such that  $b_{i}, \pi_{i} \geq 0 $, for $i\ge 1$, and
\begin{align} \label{eq7}
b \coloneqq \sum_{i \geq 1} b_{i}  < \infty \qquad \text{and} \qquad \rho \coloneqq \sum_{i \geq 1} \pi_{i}  < \infty.
\end{align}

\noindent The BPI-process $Z=(Z_{t}, t \geq 0)$ is a continuous-time Markov process whose dynamics are described through its  infinitesimal generator  matrix $Q = (q_{i,j})_{i,j \in \mathbb{N}_{0}}$ which is given by
\begin{align} \label{BPIQmatrix}
q_{i,j} = \left\{ \begin{array}{ll}
              i \pi_{j-i} + i(i-1)b_{j-i} & \mbox{  if }  i \geq 1 \, \, \text{and} \, \, j >i, \\
              di +c i (i-1)  & \mbox{  if }  i \geq 1 \, \, \text{and} \, \, j = i-1, \\
              -i(d + \rho + (b + c) (i-1)  )  & \mbox{  if }  i \geq 1 \, \, \text{and} \, \, j =i,  \\
              0 &  \text{otherwise}. \\
              \end{array}
    \right.
\end{align}
\noindent Here $\mathbb{N}_{0} \coloneqq \mathbb{N} \cup \{0\}$. Note that a BPI-process can also be understood as the counting process of a particle system where particles may coalesce (i.e.\ competition event) or fragment  with or without collisions (i.e.\ cooperation and branching events,  respectively). We denote by $\mathbb{P}_{z}$ the law of $Z$ when issued from $z \in\mathbb{N}_{0}$. 

Observe  that the total natural birth rate, given that the size of the population is $z \in \mathbb{N}$, is thus $\rho z$ and the natural death rate $dz$. In particular,  when $\rho>0$, the parameter $\pi_{i}/\rho$ represents the probability of having $i$ new individuals born at each reproduction event. Then, the function
\begin{align} \label{eq9}
\Psi_{d,\rho}(u) \coloneqq d - (\rho +d)u + \sum_{i \geq 1} \pi_{i}u^{i+1}, \hspace*{5mm} u \in [0,1],
\end{align}
\noindent characterises completely the underlying branching mechanism of the BPI-process, i.e.\ when $c = b=0$, the BPI-process becomes a continuous-time BGW process whose branching mechanism is given by $\Psi_{d, \rho}$. In the sequel, we refer to  $\Psi_{d,\rho}$  defined in  \eqref{eq9} as the  {\it branching mechanism}.

On the other hand, the extra death rate $cz(z - 1)$ models competition pressure, i.e., deaths may occur if one of the $z$ individuals selects another from the remaining $z-1$ ones at constant rate $c$ and then kills it. The extra birth rate $bz(z-1)$ corresponds to the  interaction coming from cooperation, i.e.\ when $b>0$,  each pair of individuals interact and produce $i$ new individuals with probability $b_{i}/b$. These interactions motivate the introduction of the function 
\begin{align} \label{eq15}
\Phi_{c,b}(u) \coloneqq  c-(c+b)u + \sum_{i \geq 1} b_{i} u^{i+1}, \hspace*{5mm} u \in [0,1],
\end{align}
\noindent which characterises the cooperation and competition mechanisms of the BPI-process. In the sequel, we refer to $\Phi_{c,b}$ as the {\it interaction mechanism}. 

An interesting and special case of the already introduced family of processes is the so-called logistic branching process (or LB-process for short), i.e., when $c >0$ and $b=0$. The LB-process was introduced and studied deeply by Lambert in \cite{La2005} under the following log-moment condition 
\begin{align}\label{logmoment}
\sum_{i \geq 1} \pi_{i}  \ln (i) < \infty.
\end{align} 
Another important example is  the corresponding counting block process of the so-called Kingman's coalescent \cite{Kingman1982}  which corresponds to the absence of natural deaths, branching and cooperation  events in our model (i.e., $d=\rho = b=0$). 

Models with cooperation have appeared in the literature before, for instance we mention  Sturm and Zwart \cite{Sturm2015} and more recently  Gonz\'alez Casanova et al.\ \cite{Pa2017}, where  a more general model is considered which includes   other types of interactions  besides competition and cooperation, namely {\it annihilation} (which is a pairwise interaction event) and {\it catastrophes} (which is a multiple interaction event). Despite the fact that  annihilation is a pairwise interaction event, we exclude it from our model due to its complexity (it produces negative jumps of size two) and it cannot be covered with the techniques we develop here.    

According to the infinitesimal generator $Q$ in \eqref{BPIQmatrix}, we have the following classification of states of BPI-processes.
\begin{table}[h]
\centering
\begin{tabular}{ |c|c|c|c|} 
 \hline
 $\{0 \}$ & $\{1\}$ & If $\rho >0$ & If $\rho = 0$ and $b>0$  \\ 
 is absorbing & is absorbing & $Z$ is irreducible  &  $Z$ is irreducible  \\ 
 & when  $d=\rho=0$ & on $\mathbb{N}$ whenever &  on $\mathbb{N} \setminus \{1\}$ whenever  \\  
   & & $d>0$ or $c>0$ & $d > 0$ or $c >0$\\
 \hline
\end{tabular}
\caption{Classification of  states of  BPI-processes.}
\end{table}

\noindent Let $\zeta_{\infty}$ be the (first) explosion time of a BPI-process $Z$, i.e $\zeta_{\infty} \coloneqq \inf \{t  \geq 0: Z_{t} = \infty \}$, with the usual convention that $\inf \varnothing = \infty$. In this work, we shall consider that the BPI-process is sent to a cemetery point, let us  say $\infty$, after explosion and remains there forever (i.e.\ $Z_{t} = \infty$ if $t \geq \zeta_{\infty}$); we refer to Chapters 2 and 3 of  the monograph of Norris \cite{Norris1998} and references therein for background on continuous-time Markov chains.

The aim of this manuscript is to study the long-term behaviour of BPI-processes with  general branching and interaction mechanisms; and the precise understanding of how  competition and cooperation regulates their growth. In other words, we are interested in  studying the nature of the absorbing states $\{0 \}$ (extinction of the population), $\{1\}$ and $\{\infty\}$ (explosion of the population) as well as whether or not the processes have a stationary distribution.  In this direction, we define 
\begin{align*} 
\zeta_{0} \coloneqq \inf \{t  \geq 0: Z_{t} = 0 \} \qquad \text{and} \qquad \zeta_{1} \coloneqq \inf \{t  \geq 0: Z_{t} = 1 \}, 
\end{align*}
\noindent the first hitting times of the states $\{0\}$ and $\{1\}$, respectively, with the usual convention that $\inf \varnothing = \infty$. 
We say that the BPI-process $Z$ explodes with positive probability  if and only if $\mathbb{P}_{z}(\zeta_{\infty} < \infty) > 0$ (otherwise, we say it is conservative) and that it becomes extinct  with positive probability if and only if $\mathbb{P}_{z}(\zeta_{0} < \infty) > 0$. A rather surprising phenomenon is that  competition does not always prevent explosion while cooperation can also not prevent extinction. Nevertheless, the competition pressure as the population size increases could cause a compensation near $\{\infty\}$ in order to allow the population to survive forever, that is,  the process will never reach the absorbing states $\{0\}$ and $\{\infty\}$.  

In the case of  LB-processes (i.e., $c>0$ and $b=0$),  Lambert \cite{La2005}  found that  the log-moment condition \eqref{logmoment}  is sufficient (but not necessary) to guarantee that the process never explodes; see Theorems 2.2 and  2.3 in \cite{La2005}. Moreover,  under \eqref{logmoment}, it is observed in \cite{La2005}   that the competition alone has no impact on the extinction of the population. Indeed,  when $d=0$, the process admits a stationary distribution and, when $d>0$, the process gets absorbed at $\{0\}$ almost surely. The event of explosion of LB-processes was not formally studied in \cite{La2005} and one of our aims is to describe such behaviour.

Recently, Foucart \cite{Cle2017}  studied the event of extinction for the continuous-state  branching process with competition (or logistic CSBP) improving the results of Lambert \cite{La2005}  in the continuous-time and continuous-state setting. Furthermore, Foucart \cite{Cle2017} established a criterion for  explosion using the fact that the underlying process in the Lamperti transform of the logistic CSBP is a generalised Ornstein-Uhlenbeck process. The explicit knowledge of the   Laplace transform of generalised Ornstein-Uhlenbeck processes  is a fundamental tool to determine the event of explosion of the logistic CSBP. However, the LB-process cannot be seen as a logistic CSBP. Specifically, competition in \cite{Cle2017} (see Definition 2.1 there) appears as a negative non-linear drift while in our case competition corresponds to an extra negative jump which makes the  study of the asymptotic behaviour of LB-processes more involved. In fact, such negative jump implies that the generating function of the underlying process of the LB-process, in the Lamperti transform,  cannot be computed  explicitly except for two particular cases, see Proposition \ref{Pro1} below. In the general scenario, i.e.\ when there is cooperation, Gonz\'alez Casanova et al.\ \cite{Pa2017} (see Theorems 1 and 3) have proven similar results under the rather strong assumption that the mean of the offspring size at each birth time is finite, i.e.\ $\sum_{i \geq 1} i \pi_{i} < \infty$. In this case,  the study of the non-explosion probability  is based on a Foster-Lyapunov criteria developed by Meyn and Tweedie \cite{MT93} where the finiteness of the mean  of the offspring distribution plays a crucial role. We emphasize that the case considered  in \cite{Pa2017} includes  \textit{annihilation} and {\it catastrophes} events that we cannot cover  with the techniques developed in this manuscript. Nevertheless,  our results  provide a straightforward  stochastic bounds (via a coupling argument) for  the model considered in \cite{Pa2017} in  some of the events that we are interested on.\\

In this manuscript,  we  provide  integral tests under which a BPI-process explodes in finite time,   becomes extinct or absorbed at $\{1\}$. 
Moreover, we  also give an integral test for  the existence of a stationary distribution and a 
condition under which the process comes down from infinity.  Throughout this work and unless we specify otherwise, we always assume that 
\begin{equation*}  \label{main}
c > 0 \hspace{5mm} \text{and} \hspace*{5mm} \varsigma \coloneqq-c+\sum_{i \geq 1} ib_{i} \leq 0. \tag{${\bf H}$}
\end{equation*}
Observe that when $\varsigma<0$, we necessarily have that $c>0$. When $\varsigma=0$ and $c=0$, the process $Z$ is nothing but a BGW-process whose long-term behaviour has been well studied. In other words, under assumption (\ref{main}), we always assume that there is competition and that the mean of the offspring at each birth event given by cooperation is bounded by the competition. 

Following the terminology in Gonz\'alez Casanova et al.\ \cite{Pa2017}, we say that  a BPI-process is in the  {\it supercritical, critical }or {\it subcritical cooperative }regime accordingly as  $\varsigma>0$, $\varsigma=0$  or $\varsigma<0$.  Here, the supercritical cooperative regime is not considered  since the approach we develop basically  cannot be applied in this case as we will see later. Nevertheless, Theorem 1 in \cite{Pa2017} shows that in this regime, and when $\sum_{i \geq i} i \pi_{i} < \infty$,  the process may explode in finite time with positive probability. On the other hand,  the long-term behaviour in the critical cooperative regime has not been treated in \cite{Pa2017} but during the preparation of this manuscript we knew that some new developments have been done in the general setting (i.e.\ including annihilation and catastrophes as in \cite{Pa2017}) by Gonz\'alez Casanova et al.\ \cite{Pa20192} under the conditions $\sum_{i \geq 1} i \pi_{i} < \infty$ and $\sum_{i \geq 1} i^2 b_{i} < \infty$.

Before we present our main results, we introduce some notation. For  $x,y\in(0,1)$, we define
\begin{eqnarray*}
\mathcal{Q}_{d,\rho}^{c,b}(y;x)\coloneqq \int_{y}^{x} \frac{\Psi_{d,\rho}(w)}{w\Phi_{c,b}(w)} {\rm d} w, \qquad \mathcal{J}_{d,\rho}^{c,b}(y;x)\coloneqq \exp \left( \mathcal{Q}_{d,\rho}^{c,b}(y;x) \right), \qquad \mathcal{S}_{d,\rho}^{c,b}(y;x)\coloneqq \int_y^x \frac{{\rm d} u}{\mathcal{J}_{d,\rho}^{c,b}(y;u)} ,
\end{eqnarray*}
\begin{eqnarray*}
\mathcal{R}_{d,\rho}^{c,b}(y; x)\coloneqq \int_{y}^{x} \frac{1}{u\Phi_{c,b}(u)} \mathcal{J}_{d,\rho}^{c,b}(y;u){\rm d} u, \qquad \mathcal{E}_{d,\rho}^{c,b} (y; x)\coloneqq\int_{y}^{x}  \mathcal{R}_{d,\rho}^{c,b}(u; x) {\rm d} u,
\end{eqnarray*}

\noindent and 
\begin{eqnarray*}
\mathcal{I}_{d, \rho}^{c, b}(y; x) \coloneqq  \int_{y}^{x} \frac{1}{\mathcal{J}_{d, \rho}^{c, b}(y; u)} \mathcal{R}_{d, \rho}^{c, b}(y; u) {\rm d}u. 
\end{eqnarray*}

\noindent The above quantities are always finite whenever $x,y\in(0,1)$. For $\theta \in (0,1)$ and $i=0,1$, we also define 
\begin{eqnarray*}
\mathcal{Q}_{d,\rho}^{c,b}(\theta;i)\coloneqq \lim_{x \rightarrow i} \mathcal{Q}_{d,\rho}^{c,b}(\theta;x), \qquad \mathcal{J}_{d,\rho}^{c,b}(\theta;i)\coloneqq \lim_{x \rightarrow i} \mathcal{J}_{d,\rho}^{c,b}(\theta;x), \qquad \mathcal{S}_{d,\rho}^{c,b}(\theta;i)\coloneqq \lim_{x \rightarrow i} \mathcal{S}_{d,\rho}^{c,b}(\theta;x),
\end{eqnarray*}
\begin{eqnarray*}
\mathcal{R}_{d,\rho}^{c,b}(\theta;i)\coloneqq \lim_{x \rightarrow i} \mathcal{R}_{d,\rho}^{c,b}(\theta;x), \qquad \mathcal{E}_{d,\rho}^{c,b}(\theta;i)\coloneqq \lim_{x \rightarrow i} \mathcal{E}_{d,\rho}^{c,b}(\theta;x) \qquad \text{and} \qquad \mathcal{I}_{d,\rho}^{c,b}(\theta;i)\coloneqq \lim_{x \rightarrow i} \mathcal{I}_{d,\rho}^{c,b}(\theta;x).
\end{eqnarray*}

These last quantities may be finite or infinite. In particular, if they are finite (resp.\ infinite) for some $\theta \in (0,1)$ then they are finite (resp.\ infinite) for all $\theta \in (0,1)$. Most of the integral test that we present below will be in terms of the above quantities. 

\subsection{Explosion}

Before we start our exposition, we remark that the question of explosion for  continuous-time Markov chains is quite important, specially when we are interested in uniqueness. For instance, when there are no explosions, the solutions of the forward and backward equations are unique. There is a well-known necessary and sufficient condition for conservativeness in terms of the embedded jump chain which is usually hard to check. The integral tests that we present here are in terms of the parameters of the process and are quite sharp specially in the subcritical cooperative regime. We will present some illustrative examples, see  Section \ref{Sectionex} below, which show the applicability  of  our results  as well as the complexity of the event of explosion specially in the critical cooperative case, i.e.\ when  $\varsigma=0$.
    
Our main first result considers subcritical cooperative BPI-processes, i.e. when $\varsigma<0$, and provides a necessary and sufficient condition for the probability of explosion in the case when $d=0$. For the case $d>0$, we provide conditions under which the process is conservative or it explodes with positive probability.
\begin{theorem}\label{Theo1}
Let $z \in \mathbb{N}$, $\rho>0$,  $b\ge0$ and  $\varsigma< 0$. 
\begin{itemize}
\item[(i)]  For $d=0$, the associated BPI-process explodes with positive probability (i.e.\ $\mathbb{P}_{z}(\zeta_{\infty} < \infty) >0$) or it is conservative accordingly as $\mathcal{R}_{0,\rho}^{c,b}(\theta; 1)$, for some $\theta \in (0,1)$, is finite or infinite.

\item[(ii)] Set $\tilde{c}=c\lor d$. The associated BPI-process explodes with positive probability (i.e.\ $\mathbb{P}_{z}(\zeta_{\infty} < \infty) >0$) if $\mathcal{R}_{\tilde{c},\rho}^{\tilde{c},0} (\theta; 1)< \infty$, for some $\theta \in (0,1)$.

\item[(iii)] If $d >0$ and $\mathcal{R}_{d,\rho}^{c,b}(\theta; 1) = \infty$, for some $\theta \in (0,1)$, then the associated BPI-process is conservative.
\end{itemize}
\end{theorem}

 The proof of the above result relies in a Lamperti-type argument similarly as in Foucart \cite{Cle2017} and Leman and Pardo \cite{Leman}. Unfortunately here the situation is very different, and in some sense more complicated, than in \cite{Cle2017} or in \cite{Leman} since   the moment generating function of the underlying Markov chain, in the Lamperti-type time change, can only be computed explicitly in the following two cases: when $d=0$ or $c=d$ and $b=0$; see  Proposition \ref{Pro1}. In other words, with our methods, we can only get  a necessary and sufficient condition when $d=0$ and when $c=d$.  For the case $d>0$,  we use a coupling argument which depends on the values of the parameters  $c$ and $d$; and  explains the differences between parts (ii) and (iii). 

We also remark that  Theorem  \ref{Theo1} also  includes the case of  LB-processes, i.e.\ when $b=0$, which is the model studied in \cite{La2005}. Up to our knowledge, the extinction event of LB-processes has not been studied yet. In this particular case, the previous result, when combined with Lemma \ref{Pro3} below, can be simplified and we get a necessary and sufficient condition for the case when the natural death rate  $d$ is smaller or equal to the competition rate $c$. We state such a result as a Corollary for future reference.

\begin{corollary}[LB-processes] \label{newcoro}
Suppose that $z \in \mathbb{N}$, $c>0$, $b=0$ and $\rho >0$. Then, 
\begin{itemize}
\item[(i)] if $c\ge d \geq 0$, the associated BPI-process explodes with positive probability (i.e.\ $\mathbb{P}_{z}(\zeta_{\infty} < \infty) > 0$)  if and only if $\mathcal{R}_{d,\rho}^{c,0}(\theta; 1) < \infty$, for some $\theta \in (0,1)$.
\item[(ii)] if $c<d$, the associated BPI-process explodes with positive probability  if $\mathcal{R}_{d, \rho}^{d,0}(\theta; 1)< \infty$, for some $\theta \in (0,1)$, or it is conservative if $\mathcal{R}_{d,\rho}^{c,0}(\theta; 1) = \infty$, for some $\theta \in (0,1)$.
\end{itemize}
\end{corollary}

Moreover, we can determine in some cases when the BPI-process explodes almost surely.

\begin{proposition}\label{lemma3}
Suppose that $z \in \mathbb{N}$, $\rho>0$ and $\varsigma< 0$. Then, we have the following:
\begin{itemize}
\item[(i)] Suppose that $d=0$ and that $\mathcal{R}_{0,\rho}^{c,b}(\theta; 1) < \infty$,  for some $\theta \in (0,1)$. Then, the associated BPI-process explodes almost surely (i.e.\ $\mathbb{P}_{z}(\zeta_{\infty} < \infty) =1$).

\item[(ii)] Suppose that $b=0$ (i.e., the LB-process case), $c \geq d \geq 0$ and that $\mathcal{R}_{d,\rho}^{c,0} (\theta; 1)< \infty$, for some $\theta \in (0,1)$. Then, the associated BPI-process explodes almost surely if and only if $d=0$.
\end{itemize}
\end{proposition}

Unlike the subcritical cooperative case,  the explosion event in the critical cooperative case (\ $\varsigma=0$)   seems to be rather complicated.   In fact a new condition appears that depends on the value of $\mathcal{E}_{\cdot,\rho}^{\cdot,\cdot}(\theta; 1)$. For technical reasons, we cannot capture completely the event of explosion as in the subcritical cooperative regime. More precisely, our next result may exclude some simple cases, as we will explain below, and cannot cover completely Theorem 1 in \cite{Pa2017} where it is assumed that $\sum_{i \geq i} i \pi_{i} < \infty$.  Nonetheless, it includes some interesting cases where $\sum_{i \geq i} i \pi_{i} $ may be  infinite, see for instance Examples \ref{example2new} and \ref{example3new} below. In some sense, our next result complements  Theorem 1 in \cite{Pa2017}. We also point out that in this case the event of explosion depends on the values of  $\mathcal{R}_{\cdot,\rho}^{\cdot,\cdot} (\theta; 1)$ and $\mathcal{E}_{\cdot,\rho}^{\cdot,\cdot}(\theta; 1)$.

\begin{theorem} \label{Theo3}
Let $z \in \mathbb{N}$, $\rho, b>0$ and  $\varsigma=0$.  
\begin{itemize}
\item[(i)]  For $d=0$, the associated BPI-process explodes almost surely (i.e.\ $\mathbb{P}_{z}(\zeta_{\infty} < \infty) =1$)  if $\mathcal{R}_{0,\rho}^{c,b}(\theta; 1) < \infty$ and $\mathcal{E}_{0,\rho}^{c,b}(\theta; 1) < \infty$, for some $\theta \in (0,1)$. 

\item[(ii)] Set $\tilde{c}=c\lor d$. The associated BPI-process explodes with positive probability if $\mathcal{R}_{\tilde{c},\rho}^{\tilde{c},0} (\theta; 1)< \infty$, for some $\theta \in (0,1)$.

\item[(iii)] If $d \ge 0$ and $\mathcal{R}_{0,\rho}^{c,b}(\theta; 1) = \infty$, for some $\theta \in (0,1)$, then the associated BPI-process is conservative. 
\end{itemize}
\end{theorem}
Similarly as in the proof of Theorem \ref{Theo1}, our arguments relies in a Lamperti-type technique and a coupling argument. We observe that in (ii) and (iii), the value of $\mathcal{E}_{0,\rho}^{c,b}(\theta; 1)$ do not appear but it is implicit there as we will explain below.  In (ii), the critical cooperative BPI-process is stochastically lower bounded by a subcritical cooperative BPI-process with $b=0$. Then  according to Lemma \ref{NEWlemma1} below, the condition $\mathcal{R}_{\tilde{c},\rho}^{\tilde{c},0} (\theta; 1)< \infty$ implies that $\mathcal{E}_{\tilde{c},\rho}^{\tilde{c},0} (\theta; 1)< \infty$ and thus, in this case, only the finiteness of $\mathcal{R}_{\tilde{c},\rho}^{\tilde{c},0} (\theta; 1)$ is necessary. In (iii), we use Remark \ref{remark8}  below which says that  whenever $\mathcal{R}_{0,\rho}^{c,b}(\theta; 1)=\infty$, we necessarily have  $\mathcal{E}_{0,\rho}^{c,b} (\theta; 1)=\infty$. In other words, we have that $\mathcal{R}_{0,\rho}^{c,b}(\theta; 1)=\infty$ is enough to get that the associated BPI-process is conservative.  Indeed, in the subcritical cooperative regime, the value of $\mathcal{E}_{\cdot,\cdot}^{\cdot,\cdot} (\theta; 1)$ also plays a role in Theorem \ref{Theo1}, but Lemma \ref{NEWlemma1} and Remark \ref{remark8} imply that  its value is always in accordance with the value of $\mathcal{R}_{\cdot,\cdot}^{\cdot,\cdot} (\theta; 1)$.



\begin{proposition} \label{Pro5}
Suppose that $z \in \mathbb{N}$ and $\rho = 0$, then the associated BPI-process is conservative. In particular, 
\begin{itemize}
\item[(i)] if  $d >0$, then  $\mathbb{P}_{z}(\zeta_{0} < \infty) = 1$, 
\item[(ii)] if  $d = 0$, then  $\mathbb{P}_{z}(\zeta_{0} < \infty) = 0$ and $\mathbb{P}_{z}(\zeta_{1} < \infty) = 1$. 
\end{itemize}
\end{proposition}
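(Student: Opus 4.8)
The plan is to base everything on the observation that, when $\rho=0$, the linear function $V(n)=n$ is superharmonic for $Q$ under the standing assumption (H). Writing $QV(n)=\sum_{j\neq n}q_{n,j}(j-n)$ and splitting into the cooperative up-jumps and the (size one) down-jumps produced by natural death and competition, a direct computation gives, for $n\ge 1$,
\[
QV(n)=n(n-1)\sum_{i\ge 1}i\,b_{i}-dn-cn(n-1)=\varsigma\,n(n-1)-dn .
\]
This is finite because (H) forces $\sum_{i\ge 1}i\,b_{i}\le c<\infty$, and it is non-positive since $\varsigma\le 0$ and $d\ge 0$.

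First I would prove conservativeness. As $V(n)\to\infty$ when $n\to\infty$ and $QV\le 0\le\lambda V$ for every $\lambda>0$, the classical Lyapunov (Foster--Reuter type) non-explosion criterion for continuous-time Markov chains applies and yields $\mathbb{P}_{z}(\zeta_{\infty}<\infty)=0$ for all $z\in\mathbb{N}$; hence $(Z_{t})$ is finite for all $t$.

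Next, knowing the chain is honest, the bound $QV\le 0$ makes $(Z_{t})$ a non-negative supermartingale: integrability follows from $\mathbb{E}_{z}[Z_{t}]\le z$ (obtained by localising Dynkin's formula over exit times of finite sets and passing to the limit by monotone convergence), and the supermartingale inequality follows the same way. The supermartingale convergence theorem then gives $Z_{t}\to Z_{\infty}$ almost surely with $Z_{\infty}<\infty$. Since the path is integer-valued and convergent, it is eventually equal to $Z_{\infty}$, so $Z_{\infty}$ must be an absorbing state (any state with strictly positive total jump rate is left after an almost surely finite holding time, by the strong Markov property).

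It then remains to locate the absorbing states, the total jump rate out of $n\ge 1$ being $n\big(d+(b+c)(n-1)\big)$. If $d>0$ this vanishes only at $0$, so $Z_{\infty}=0$ and $\mathbb{P}_{z}(\zeta_{0}<\infty)=1$, which is (i). If $d=0$ it vanishes exactly at $0$ and $1$; since all down-jumps come from competition and have size one, the state $0$ can be reached only through the absorbing state $1$, whence $\mathbb{P}_{z}(\zeta_{0}<\infty)=0$ and therefore $Z_{\infty}=1$ almost surely, giving $\mathbb{P}_{z}(\zeta_{1}<\infty)=1$; this is (ii). I expect the main obstacle to be the middle step—justifying that $(Z_{t})$ is a genuine supermartingale (rather than only having non-increasing mean) and that almost sure convergence of the integer-valued path forces absorption at a single finite state rather than escape towards infinity. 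This is exactly where conservativeness, established in the first step, is indispensable.
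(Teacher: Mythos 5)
Your proposal is correct, but it follows a genuinely different route from the paper. The paper deduces Proposition \ref{Pro5} from the Lamperti time-change machinery of Section \ref{sec1}: it couples $Z$ with the BPI-process $Z^{0,0}$ (mechanisms $\Psi_{0,0}$ and $\Phi_{c,b}$), and for that process Proposition \ref{Pro6} (iv) computes the generating function of the auxiliary chain $X$ explicitly ($G_{z}(u,t)=u(\tilde{\Upsilon}(t+\Upsilon(u)))^{z-1}$), reads off $\mathbf{P}_{z}(\sigma_{1}\leq t)=(\tilde{\Upsilon}(t))^{z-1}\to 1$, and transfers this through the time-change and the coupling; absorption at $0$ when $d>0$ then follows because from state $1$ the only available jump is to $0$. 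You instead work directly on $Z$ with the linear Lyapunov function $V(n)=n$, whose drift $QV(n)=\varsigma n(n-1)-dn\le 0$ under (\ref{main}) gives non-explosion by the Foster--Lyapunov criterion and then the supermartingale convergence theorem forces the integer-valued path to be eventually constant at an absorbing state, which you correctly identify as $\{0\}$ (if $d>0$) or $\{1\}$ (if $d=0$, since $0$ is shielded by the absorbing state $1$). Your argument is more elementary and self-contained -- it needs none of Section \ref{sec1} -- while the paper's approach yields sharper quantitative information (the explicit law of the hitting time of $1$ for $X$) and keeps the proof inside the unified time-change framework used for all the explosion results. The one point you rightly flag as delicate, the supermartingale property of $(Z_{t})$, does go through: localizing at the exit times $T_{N}$ of $\{0,\dots,N\}$, the overshoot $Z_{T_{N}}-Z_{T_{N}^{-}}$ has expectation bounded by $\sum_{i\ge1}ib_{i}/(b+c)$ uniformly in the pre-jump state, so Dynkin's formula applies to the stopped process even though only the first moment $\sum_{i\ge1}ib_{i}\le c$ (and no higher moment) of the cooperation offspring is available, and Fatou's lemma then gives $\mathbb{E}_{z}[Z_{t}]\le z$.
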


The following corollary is useful and follows from Theorems \ref{Theo1} and \ref{Theo3} together with Lemmas \ref{lemma13} and \ref{Pro3} which appear below. It provides a simpler condition for a subcritical cooperative BPI-process  to be conservative.  For simplicity of exposition, we defer its proof to the Appendix.
\begin{corollary}  \label{corollary4} 
\begin{itemize}
\item[(i)] If $\varsigma<0$ and $\mathcal{Q}_{d, \rho}^{c,b}(\theta;1) \in (-\infty, \infty)$, for some $\theta \in (0,1)$,  then the associated BPI-process issued from $z \in \mathbb{N}$ is conservative.
\item[(ii)]  If $\varsigma=0$  and $\mathcal{Q}_{0, \rho}^{c,b}(\theta;1) \in (-\infty, \infty)$, for some $\theta \in (0,1)$, then the associated BPI-process issued from $z \in \mathbb{N}$ is conservative.
\end{itemize}
\end{corollary}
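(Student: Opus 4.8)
The plan is to reduce the non-explosion claim to the divergence of the relevant $\mathcal{E}$-integral and then invoke the explosion criteria already proved. If $\rho=0$ the process is conservative by Proposition~\ref{Pro5}, so I may assume $\rho>0$ and hence apply Theorems~\ref{Theo1} and~\ref{Theo3}. I split according to $b$ and $d$. When $b=0$ we have $\varsigma=-c<0$, so only part~(i) is relevant, and Theorem~\ref{Theo1} (both in the regime $c\ge d$ and $c<d$) gives conservativeness once $\mathcal{E}_{d,\rho}^{c,0}(\theta;1)=\infty$. When $b>0$ and $d=0$, Theorem~\ref{Theo3}(i) gives it from $\mathcal{E}_{0,\rho}^{c,b}(\theta;1)=\infty$; and when $b>0$ and $d>0$, Theorem~\ref{Theo3}(iv) gives it from the same divergence $\mathcal{E}_{0,\rho}^{c,b}(\theta;1)=\infty$. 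In every case it therefore suffices to show that the appropriate $\mathcal{E}$-integral diverges.

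The analytic engine is the boundary behaviour of $\Phi_{c,b}$ at $1$. Using $1-w^{i}=(1-w)(1+\cdots+w^{i-1})$ one gets the factorisation $\Phi_{c,b}(w)=(1-w)h(w)$ with $h(w)=c-\sum_{i\ge1}b_i(w+\cdots+w^{i})$, and under assumption~\eqref{main} one checks $0<h(w)\le c$ on $(0,1)$ with $h(1)=-\varsigma$ (positivity uses $\sum_i i b_i\le c$, which is exactly $\varsigma\le0$). In particular $\Phi_{c,b}(w)\le c(1-w)$, so $\tfrac{1}{w\Phi_{c,b}(w)}\ge\tfrac{1}{c\,w(1-w)}$ and hence $\int_{\theta}^{1}\tfrac{{\rm d}w}{w\Phi_{c,b}(w)}=\infty$ for every $\theta\in(0,1)$, uniformly over the whole regime $\varsigma\le0$ and with no moment assumption on $(\pi_i)$ or $(b_i)$. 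On the other hand, finiteness of the relevant $\mathcal{Q}(\theta;1)$ forces $\mathcal{Q}(\theta;u)$ to stay bounded as $u\uparrow1$, so $\mathcal{J}(\theta;u)=\exp(\mathcal{Q}(\theta;u))$ is bounded above and below by strictly positive constants on $[\theta,1)$; by additivity of $\mathcal{Q}$ in its base point the same holds with any base point $u\in[\theta,1)$. Combining, $\mathcal{R}(u;1)\ge m_u\int_{u}^{1}\tfrac{{\rm d}w}{w\Phi_{c,b}(w)}=\infty$ for each $u$, where $m_u>0$ lower-bounds $\mathcal{J}(u;\cdot)$, and monotone convergence applied to $\mathcal{E}(\theta;x)=\int_{\theta}^{x}\mathcal{R}(u;x){\rm d}u$ then yields $\mathcal{E}(\theta;1)=\int_{\theta}^{1}\mathcal{R}(u;1){\rm d}u=\infty$. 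These are the quantitative relations I would package through Lemmas~\ref{lemma13} and~\ref{Pro3}.

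The only remaining point is to match the subscript of $\mathcal{Q}$ in the hypothesis to the $\mathcal{E}$-integral named by the applicable theorem, which is where the asymmetry between the two parts originates. From the identity $\Psi_{d,\rho}(w)=\Psi_{0,\rho}(w)+d(1-w)$ one gets $\mathcal{Q}_{d,\rho}^{c,b}(\theta;1)=\mathcal{Q}_{0,\rho}^{c,b}(\theta;1)+d\int_{\theta}^{1}\tfrac{1}{w\,h(w)}{\rm d}w$, since $\tfrac{1-w}{w\Phi_{c,b}(w)}=\tfrac{1}{w\,h(w)}$. In part~(i), where $\varsigma<0$, we have $h(1)=-\varsigma>0$, so $\tfrac{1}{w\,h(w)}$ is bounded near $1$, the correction term is finite, and the conditions ``$\mathcal{Q}_{d,\rho}^{c,b}(\theta;1)$ finite'' and ``$\mathcal{Q}_{0,\rho}^{c,b}(\theta;1)$ finite'' coincide; thus for $b=0$ the paragraph above gives $\mathcal{E}_{d,\rho}^{c,0}(\theta;1)=\infty$ directly, while for $b>0$ one first passes to $\mathcal{Q}_{0,\rho}^{c,b}$ and obtains $\mathcal{E}_{0,\rho}^{c,b}(\theta;1)=\infty$. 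In part~(ii), where $\varsigma=0$, the concavity of $h$ together with $h(1)=0$ makes $\tfrac{1}{w\,h(w)}$ non-integrable at $1$, so the correction term is $+\infty$ as soon as $d>0$; this is exactly why the statement must be phrased with $\mathcal{Q}_{0,\rho}^{c,b}$, and starting from its finiteness the previous paragraph again gives $\mathcal{E}_{0,\rho}^{c,b}(\theta;1)=\infty$. Feeding these divergences into Theorems~\ref{Theo1} and~\ref{Theo3} closes both parts.

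I expect the main obstacle to be bookkeeping rather than analysis: the divergence of the $\mathcal{E}$-integral is robust because the comparison $\tfrac{1}{w\Phi_{c,b}(w)}\ge\tfrac{1}{c\,w(1-w)}$ holds throughout $\varsigma\le0$, but one must carefully verify that the $\mathcal{Q}$-hypothesis actually controls the specific $\mathcal{E}$-integral appearing in the applicable part of Theorem~\ref{Theo1} or~\ref{Theo3}, and track the $\varsigma<0$ versus $\varsigma=0$ dichotomy for the correction integral, which is what forces the subscript $d$ in part~(i) and the subscript $0$ in part~(ii). The subsidiary technical checks---that $0<h(w)\le c$ under~\eqref{main}, and that $\mathcal{J}$ remains bounded away from $0$ and $\infty$ under a change of base point---are routine.
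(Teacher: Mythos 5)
Your proof is correct and follows essentially the same route as the paper's: the paper likewise splits $\Psi_{d,\rho}(w)=\Psi_{0,\rho}(w)+d(1-w)$ and uses $\Phi_{c,b}(w)\geq-\varsigma(1-w)$ to show that, for $\varsigma<0$, the integrals $\mathcal{Q}_{d,\rho}^{c,b}(\theta;1)$ and $\mathcal{Q}_{0,\rho}^{c,b}(\theta;1)$ have the same nature, and then invokes Lemma \ref{lemma13} (whose appendix proof is exactly your ``analytic engine'': $\mathcal{J}$ bounded away from $0$, $\Phi_{c,b}(w)\leq c(1-w)$, hence $\mathcal{R}=\infty$ and $\mathcal{E}=\infty$) together with Theorems \ref{Theo1} and \ref{Theo3}; your explicit treatment of $\rho=0$ via Proposition \ref{Pro5} is a point the paper's proof glosses over.

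One caveat: your side remark for part (ii), that concavity of $h$ and $h(1)=0$ force $\int_{\theta}^{1}\frac{{\rm d}w}{w\,h(w)}=\infty$, is false in general. Concavity gives $h(w)\geq h(\theta)\frac{1-w}{1-\theta}$ on $[\theta,1]$, i.e.\ an \emph{upper} bound on $1/h$, which is the wrong direction; non-integrability would require $h(w)\leq C(1-w)$, which holds only when $h'(1^-)=-\sum_j j\overline{b}_j$ is finite (equivalently $\sum_i i^2 b_i<\infty$). If instead $\overline{b}_j\propto j^{-1-\beta}$ with $\beta\in(0,1)$ (normalised so that $\sum_j\overline{b}_j=c$, hence $\varsigma=0$), then $h(w)\asymp(1-w)^{\beta}$ and the correction integral converges. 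Fortunately this claim is purely motivational in your write-up: the proof of part (ii) starts from finiteness of $\mathcal{Q}_{0,\rho}^{c,b}(\theta;1)$ and never uses it, so the argument stands as is.
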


Note that  a classical coupling argument between continuous-time Markov chains (see for e.g., \cite[Chapters IV and V]{Lindvall1992}) shows that, under our main hypothesis (\ref{main}), Theorems \ref{Theo1} and \ref{Theo3} may also imply similar results
for more general classes of subcritical cooperative branching processes with interactions (i.e., including annihilation and catastrophes) as those studied in  \cite{Pa2017}. \\

We conclude our exposition about the explosion event with the following remark in the subcritical regime. The integral tests in Theorem \ref{Theo1}, and in Corollary \ref{newcoro}, can be simplified. Indeed for $0 < \theta < x < 1$, we define
\begin{align*}
\widetilde{\mathcal{Q}}_{\rho}^{c, b}(\theta; x) = \frac{1}{\varsigma}\int_{\theta}^{x} \sum_{i \geq 1} \overline{\pi}_i w^{i-1} {\rm d} w, 
\end{align*}
\noindent which is always finite. For $\theta \in (0,1)$, we also define $\widetilde{\mathcal{Q}}_{\rho}^{c, b}(\theta; 1) = \lim_{x \uparrow 1}\widetilde{\mathcal{Q}}_{\rho}^{c, b}(\theta; x)$ which may be finite or infinite and observe that  it is equivalent to $\mathcal{Q}_{\rho}^{c, b}(\theta; 1)$.
\begin{lemma} \label{lemma1Red}
Suppose that $\varsigma<0$, then, for some $\theta \in (0,1)$, $\mathcal{Q}_{d, \rho}^{c, b}(\theta; 1) \in  (- \infty, \infty)$ if and only if $\widetilde{\mathcal{Q}}_{\rho}^{c, b}(\theta; 1) \in (-\infty, \infty)$. Moreover, there exists $\theta^{\prime} \in (0,1)$ such that for $\theta^{\prime} < \theta$, $\mathcal{Q}_{d, \rho}^{c, b}(\theta; x) \sim \widetilde{\mathcal{Q}}_{\rho}^{c, b}(\theta; x)$, as $x \uparrow 1$. 
\end{lemma}

In particular, by Corollary~\ref{corollary4} part (i), a subcritical cooperative BPI process is conservative if $\widetilde{\mathcal{Q}}_{\rho}^{c, b}(\theta; 1) \in (-\infty, \infty)$, for some $\theta \in (0,1)$. Indeed, the finiteness of $\widetilde{\mathcal{Q}}_{\rho}^{c, b}(\theta; 1)$ does not depend on the interaction parameters and the natural death parameter, but only on the natural birth parameter. On the other hand, 
for $0 < \theta < x < 1$, define
\begin{align*}
\widetilde{\mathcal{R}}_{\rho}^{c}(\theta; x)=\int_\theta^x \frac{1}{1-u}\exp\left(-\frac{1}{c}\sum_{i\ge 1}\frac{\overline{\pi}_i u^i}{i}\right){\rm d} u, 
\end{align*}
\noindent which is always finite. For $\theta \in (0,1)$, define also $\widetilde{\mathcal{R}}_{\rho}^{c}(\theta; 1) = \lim_{x \uparrow 1} \widetilde{\mathcal{R}}_{\rho}^{c}(\theta; x)$, which may be finite or infinite and observe that  it is equivalent to $\mathcal{R}_{\rho}^{c, b}(\theta; 1)$.

\begin{lemma} \label{lemma2Red}
$\mathcal{R}_{d, \rho}^{c, 0}(\theta; 1) < \infty$, for some $\theta \in (0,1)$, if and only if $\widetilde{\mathcal{R}}_{\rho}^{c}(\theta; 1) < \infty$, for some $\theta \in (0,1)$. 
\end{lemma}

In particular, the conditions in Corollary~\ref{newcoro} can be simplified. The proofs of Lemmas \ref{lemma1Red} and \ref{lemma2Red} are given in Appendix. These last two lemmas suggest that the death rate $d$ must be irrelevant in the subcritical cooperative regime. Moreover, we believe that the following conjecture must hold.
\begin{conj}  
A subcritical cooperative BPI-process explodes with positive probability or it is conservative accordingly as $\widetilde{\mathcal{R}}_{\rho}^{c,b}(\theta; 1)$, for some $\theta \in (0,1)$, is finite or infinite, where
\begin{align*}
\widetilde{\mathcal{R}}_{\rho}^{c,b}(\theta; 1)=\int_\theta^1 \frac{1}{1-u}\exp\left(\frac{1}{\varsigma}\sum_{i\ge 1}\frac{\overline{\pi}_i u^i}{i}\right){\rm d} u.
\end{align*}
\end{conj}

For the ease of readability,we will present some illustrative examples of the applicability of the previous results in Section \ref{Sectionex}. We then continue with the presentation of our results on the probability of extinction, the stationary distribution and the phenomena of coming down from infinity.\\

In what follows, we will assume that  one of the following conditions is satisfied 
\begin{equation*}  \label{mainE1}
 b=0  \quad\text{and} \quad \mathcal{R}_{d,\rho}^{c,0}(\theta;1) =  \infty  \quad \text{or} \quad
 b >0  \quad\text{and}\quad  \mathcal{R}_{0,\rho}^{c,b}(\theta;1) =  \infty. \tag{${\bf E}$}
\end{equation*}
\noindent That is, according to Theorems \ref{Theo1} and \ref{Theo3}, assumption \eqref{mainE1} implies that the associated BPI-process is always conservative. Therefore, under assumption \eqref{mainE1}, it is natural to study the event of extinction and the existence of a stationary distribution.

\subsection{Extinction}

The main result of this section identifies the conditions under which subcritical and critical cooperative BPI-processes become extinct almost surely.

\begin{theorem}[Extinction] \label{lemma4}
\noindent Suppose that $d >0$ and $z \in \mathbb{N}$. 
\begin{itemize}
\item[(i)] If $\mathcal{S}_{d,\rho}^{c,b}(\theta,1)< \infty$, for some $\theta \in (0,1)$, then $\mathbb{P}_{z} (\zeta_{0} < \infty)= 1$. 
\item[(ii)] Otherwise, $\mathbb{P}_{z} (\zeta_{0} < \infty)=0$. 
\end{itemize}
\end{theorem}

Next, following Lambert \cite{La2005}, we provide a deeper insight into the law of the extinction time $\zeta_{0}$. With this purpose in mind, we will show how to extend the results established in \cite{La2005} to our case. The proofs are based in a slight adaptation to those of \cite{La2005}, and thus, we leave the details to the interested reader to keep this article of a reasonable size; see Section \ref{sec4} below. Nevertheless, we believe that these results are important in their own right and must be reported for future reference.

For what follows, we assume that $\mathcal{Q}_{d,\rho}^{c,b}(\theta; 1)  \in (-\infty, \infty)$, for some $\theta \in (0,1)$, which clearly implies that $\mathcal{S}_{d,\rho}^{c,b}(\theta; 1) < \infty$. Hence if $d>0$, from Theorem \ref{lemma4}, we have that $Z$ gets absorbed at $0$ in finite time almost surely. This allows us to define the function $m(u) \coloneqq \mathcal{Q}_{d,\rho}^{c,b}(u; 1)$, for $u \in (0,1]$. We also define  the nonnegative decreasing function $\tau : (0,1] \rightarrow [0, \xi)$ by
\[
\tau(u) \coloneqq \int_{u}^{1}e^{m(v)} {\rm d}v, \quad \textrm{for} \quad u \in (0,1], 
\]
where $\xi \coloneqq \int_{0}^{1}e^{m(v)} {\rm d}v \in (0, \infty]$. The mapping $\tau$ is a bijection whose inverse on $[0, \xi)$ is denoted  by $\varphi$. In particular $\varphi^{\prime}(u) = - e^{-m(\varphi(u))}$, for $u \in [0, \xi)$. Finally, we introduce the function
\begin{eqnarray*}
H_{q,z}(u) \coloneqq \int_{0}^{\infty} e^{-qt} \mathbb{E}_{z}[u^{Z_{t}}] {\rm d}t, \hspace*{4mm} \text{for} \hspace*{2mm} z \in \mathbb{N}_{0}, \hspace*{2mm} u \in [0,1] \hspace*{2mm} \text{and} \hspace*{2mm} q >0. 
\end{eqnarray*}

\noindent If $u = e^{-\lambda}$, for $\lambda \geq 0$, then the function $H_{q,z}$ is just the Laplace transform of the $q$-resolvent measure of the BPI-process $Z$ started at $z$. But since $Z$ is an integer-valued process, it is more convenient to work with the current definition of $H_{q,z}$. The next result corresponds to Lemmas 2.1, 3.6 and 3.7 in \cite{La2005}. 

\begin{lemma} \label{lemma11}
Suppose that $\mathcal{Q}_{d,\rho}^{c,b}(\theta; 1)  \in (-\infty, \infty)$, for some $\theta \in (0,1)$, $d > 0$,  $z \in \mathbb{N}_{0}$ and $q >0$. 
\begin{itemize}
\item[(i)] As a function of $u \in [0,1)$, $H_{q,z} \in C^{2}([0,1), \mathbb{R})$ and it solves the second-order linear differential equation 
\begin{equation} \label{eq1}
- u \Phi_{c,b}(u) y^{\prime \prime}(u) - \Psi_{d,\rho}(u)y^{\prime}(u) + q y(u) = u^{z}. 
\end{equation} 
\end{itemize}

\noindent Let $(E_{q}^{h})$ be the homogeneous differential equation associated to (\ref{eq1}). Then,
\begin{itemize}
\item[(ii)] for any solution $(I, f_{q})$ of $(E_{q}^{h})$, for any open interval $I_{0} \subset I$ on which $f_{q}(\cdot)$ does not vanish, the function $g_{q}(u)  \coloneqq f^{\prime}_{q}(u)/f_{q}(u)$, for $u \in I_{0}$, solves on $I_{0}$ the Riccati differential equation 
\begin{equation} \label{eq3}
y^{\prime}(u) + y^{2}(u) + \frac{\Psi_{d,\rho}(u)}{u \Phi_{c,b}(u)} y(u) = \frac{q}{u \Phi_{c,b}(u)}. 
\end{equation} 

\item[(iii)] For any solution $(J, g_{q})$ of (\ref{eq3}), the function $h_{q}(u)  \coloneqq \exp(-m(\varphi(u)))g_{q}(\varphi(u))$, for $u \in \tau(J)$, solves on $\tau(J)$ the Riccati differential equation 
\begin{equation} \label{eq14}
y^{\prime}(u) - y^{2}(u)  = -q r^{2}(u), 
\end{equation} 

\noindent where $r(u) = |\varphi^{\prime}(u)| (\varphi(u) \Phi_{c,b}(\varphi(u)) )^{-1/2}$, for $u \in (0,\xi)$. 

\item[(iv)]  The equation (\ref{eq14}) has a unique nonnegative solution $w_{q}(\cdot)$ defined on $(0, \xi^-)$ and vanishing at $\xi^-$. In addition, $w_{q}$ is positive on $(0, \xi)$, and for any $u$ sufficiently small or large, $w_{q}(u) < \sqrt{q} r(u)$. As a consequence, $\int_{0}^{\xi}w_{q}(w) {\rm d} w$ converges, and $w_{q}$ decreases initially and ultimately. 
\end{itemize}
\end{lemma}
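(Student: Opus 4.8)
The plan is to establish the four assertions in order; parts (i)–(iii) are essentially computations, while part (iv) carries the genuine analytic content.

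For part (i) I would first compute the action of the (formal) generator $\mathcal{A}$ associated with $Q$ on the test functions $z\mapsto u^{z}$, for fixed $u\in[0,1)$. Using the transition rates $q_{i,j}$ and the identities $z\,u^{z}=u\,\partial_{u}u^{z}$ and $z(z-1)u^{z}=u^{2}\partial_{u}^{2}u^{z}$ to absorb the linear and quadratic-in-$z$ contributions coming from natural branching, competition and cooperation, one obtains
\[
\mathcal{A}(u^{\bullet})(z)=\bigl(uA(u)+d(1-u)\bigr)\partial_{u}u^{z}+\bigl(u^{2}B(u)+cu(1-u)\bigr)\partial_{u}^{2}u^{z},
\]
where $A(u)=\sum_{k\ge1}\pi_{k}(u^{k}-1)$ and $B(u)=\sum_{k\ge1}b_{k}(u^{k}-1)$. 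From the definitions \eqref{eq9} and \eqref{eq15} one checks the algebraic identities $uA(u)+d(1-u)=\Psi_{d,\rho}(u)$ and $u^{2}B(u)+cu(1-u)=u\Phi_{c,b}(u)$. Since $u^{Z_{s}}\le1$ is bounded and, by the standing hypothesis $\mathcal{Q}_{d,\rho}^{c,b}(\theta;1)\in(-\infty,\infty)$ together with Corollary \ref{corollary4}, the process $Z$ is conservative, the quantity $u^{Z_{t}}-u^{z}-\int_{0}^{t}\mathcal{A}(u^{\bullet})(Z_{s})\,{\rm d}s$ is a martingale with integrand bounded on compacts. Taking expectations, then Laplace transforms in $t$ and integrating the time derivative by parts against $e^{-qt}$, yields $qH_{q,z}(u)-u^{z}=\Psi_{d,\rho}(u)H_{q,z}'(u)+u\Phi_{c,b}(u)H_{q,z}''(u)$, which is \eqref{eq1}. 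The regularity $H_{q,z}\in C^{2}([0,1))$ and all interchanges of $\partial_{u}$ with $\mathbb{E}_{z}$ and with $\int_{0}^{\infty}e^{-qt}{\rm d}t$ are justified by observing that $\mathbb{E}_{z}[u^{Z_{t}}]=\sum_{n}\mathbb{P}_{z}(Z_{t}=n)u^{n}$, and hence $H_{q,z}(u)=\sum_{n}\bigl(\int_{0}^{\infty}e^{-qt}\mathbb{P}_{z}(Z_{t}=n)\,{\rm d}t\bigr)u^{n}$, are power series with nonnegative coefficients and radius of convergence at least $1$, so they are real-analytic on $[0,1)$.

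Part (ii) is the logarithmic-derivative reduction: with $f_{q}$ solving the homogeneous equation of \eqref{eq1} and $g_{q}=f_{q}'/f_{q}$ on $I_{0}$, one has $g_{q}'=f_{q}''/f_{q}-g_{q}^{2}$, and substituting $f_{q}''/f_{q}=(q-\Psi_{d,\rho}g_{q})/(u\Phi_{c,b})$ gives \eqref{eq3} after rearranging. Part (iii) is the corresponding time-change computation: writing $v=\varphi(u)$ and using $\varphi'(u)=-e^{-m(v)}$, $m'(v)=-\Psi_{d,\rho}(v)/(v\Phi_{c,b}(v))$ and the Riccati identity \eqref{eq3} for $g_{q}$, differentiation of $h_{q}(u)=e^{-m(v)}g_{q}(v)$ collapses the drift term and leaves exactly $h_{q}'-h_{q}^{2}=-q\,e^{-2m(v)}/(v\Phi_{c,b}(v))=-q\,r^{2}(u)$, since $r^{2}(u)=|\varphi'(u)|^{2}(\varphi(u)\Phi_{c,b}(\varphi(u)))^{-1}=e^{-2m(v)}/(v\Phi_{c,b}(v))$. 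The only points requiring care are that $\Phi_{c,b}>0$ on $(0,1)$, guaranteed by \eqref{eq02} when $\varsigma<0$ and by \eqref{Reveq02} when $\varsigma=0$, so that $r$ is real, and that the correct domain is $\tau(J)$.

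Part (iv) is the crux. The key device is the linearisation of \eqref{eq14}: if $\psi$ solves the linear equation $\psi''=q\,r^{2}\psi$ on $(0,\xi)$ without vanishing, then $w=-\psi'/\psi$ solves \eqref{eq14}. Because $q\,r^{2}\ge0$, every positive solution is convex, so the equation is disconjugate and possesses a principal (minimal) solution at the endpoint $\xi$. I would construct $\psi$ by solving $\psi''=q\,r^{2}\psi$ on $(a,b)\subset(0,\xi)$ with boundary data $\psi(b)=1$, $\psi'(b)=0$ and letting $b\uparrow\xi$, a monotonicity argument producing a nontrivial limit even when $\xi=\infty$. Convexity then forces $\psi'\le0$ and $\psi>0$ on $(0,\xi)$, so $w_{q}=-\psi'/\psi\ge0$ with $w_{q}(\xi^{-})=0$ and $w_{q}>0$ on $(0,\xi)$, giving existence. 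Uniqueness follows because the difference $\delta=w_{1}-w_{2}$ of two nonnegative solutions vanishing at $\xi^{-}$ satisfies the linear equation $\delta'=(w_{1}+w_{2})\delta$, so $|\delta|$ is nondecreasing; letting the argument tend to $\xi^{-}$, where $\delta\to0$, forces $\delta\equiv0$.

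It remains to prove the comparison $w_{q}(u)<\sqrt{q}\,r(u)$ for $u$ near $0$ and near $\xi$ and to deduce the consequences. Since \eqref{eq14} reads $w_{q}'=w_{q}^{2}-q\,r^{2}$, the inequality $w_{q}<\sqrt{q}\,r$ is exactly equivalent to $w_{q}'<0$, so the comparison and the claim that $w_{q}$ \emph{decreases initially and ultimately} are one and the same statement; both are obtained from a sign analysis of the nullcline $w=\sqrt{q}\,r$, using that near $\xi$ the solution $w_{q}$, being positive but vanishing at $\xi^{-}$, must enter and remain in $\{w<\sqrt{q}\,r\}$, while near $0$ it cannot cross above $\sqrt{q}\,r$ without being driven to $+\infty$. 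For the convergence of $\int_{0}^{\xi}w_{q}$ I would use $\int_{a}^{b}w_{q}\,{\rm d}u=\ln(\psi(a)/\psi(b))$, so that, with the normalisation $\psi(\xi^{-})=1$, convergence is equivalent to $\psi(0^{+})<\infty$; near $\xi$ this is immediate from $w_{q}\le\sqrt{q}\,r$ and the integrability of $r$ there (after $u=\tau(v)$ this reduces to $\int_{0}{\rm d}v/\sqrt{v\Phi_{c,b}(v)}$ with $\Phi_{c,b}(v)\to c>0$), whereas near $0$ it requires the finer decay of $r$. I expect this endpoint analysis to be the main obstacle: pinning down the precise asymptotics of $r$ as $u\to0$ (where $\varphi(u)\to1$ and $\Phi_{c,b}\to0$) and as $u\to\xi^{-}$ (where $\varphi(u)\to0$), controlling them against the Riccati dynamics to fix the sign of $w_{q}'$, and doing so uniformly across the subcritical ($\varsigma<0$) and critical ($\varsigma=0$) regimes and the cases $\xi<\infty$ and $\xi=\infty$.
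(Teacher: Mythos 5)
Parts (i)--(iii) of your proposal are correct and follow essentially the same route as the paper: the paper also derives \eqref{eq1} by applying the generator to $u^{z}$ (its identity \eqref{eq13} is exactly your $\mathcal{A}(u^{\bullet})(z)=\Psi_{d,\rho}(u)\partial_{u}u^{z}+u\Phi_{c,b}(u)\partial_{u}^{2}u^{z}$) and then passing to the resolvent via the forward Kolmogorov equation, and it obtains (ii) and (iii) by the same logarithmic-derivative and time-change computations you perform. Your power-series justification of the $C^{2}$ regularity is a harmless variant of the paper's appeal to dominated convergence.

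For part (iv), however, your proposal has a genuine gap. The paper disposes of (iv) by citing the proof of Lemma 2.1 in Lambert's article, so a self-contained reconstruction is welcome, and your skeleton is the right one: the linearisation $w=-\psi'/\psi$ with $\psi''=q\,r^{2}\psi$, the principal solution at $\xi$, and uniqueness via $\delta'=(w_{1}+w_{2})\delta$ are all sound and are indeed how Lambert argues. But you stop exactly where the content of (iv) lies: you do not establish the comparison $w_{q}<\sqrt{q}\,r$ near the two endpoints, nor the convergence of $\int_{0}^{\xi}w_{q}$, and you say so yourself. The endpoint $u\to 0$ is not a formality. There $\varphi(u)\to 1$ and $\Phi_{c,b}(\varphi(u))\to\Phi_{c,b}(1)=0$, so $r$ blows up; whether $r$ is integrable at $0$ (which is what your reduction ``convergence is equivalent to $\psi(0^{+})<\infty$'' together with $w_{q}\le\sqrt{q}\,r$ would require) depends on the regime. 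In the subcritical case $\Phi_{c,b}(v)\asymp 1-v$ by \eqref{eq02} and, using that $\mathcal{Q}_{d,\rho}^{c,b}(\theta;1)\in(-\infty,\infty)$ makes $m$ bounded near $1$ and hence $u=\tau(v)\asymp 1-v$, one gets $r(u)\asymp u^{-1/2}$, which is integrable; in the critical case $\Phi_{c,b}(v)$ can be of order $(1-v)^{2}$ by \eqref{Reveq02}, which would make $r(u)\asymp u^{-1}$ and destroy the ``as a consequence'' step, so one must either check that the standing hypothesis $\mathcal{Q}_{d,\rho}^{c,b}(\theta;1)\in(-\infty,\infty)$ excludes this degeneracy or supply a different argument near $0$. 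Until this asymptotic analysis is actually carried out (and the nullcline argument for the sign of $w_{q}'$ near $0$ is made precise --- note that ``$w_{q}$ decreasing on $(0,\varepsilon)$'' does not by itself prevent $w_{q}$ from blowing up at $0^{+}$), part (iv) is not proved.
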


The next result corresponds to Theorem 3.9 in \cite{La2005} and it determines the law of $\zeta_0$.

\begin{theorem}\label{Lamtype}
Suppose that  $\mathcal{Q}_{d,\rho}^{c,b}(\theta; 1)  \in (-\infty, \infty)$, for some $\theta \in (0,1)$, $d > 0$, $z \in \mathbb{N}_{0}$ and $q >0$. Recall the function $w_{q}$ defined in Lemma \ref{lemma11} part (iv) and the notation $\xi \coloneqq \int_{0}^{1}e^{m(v)} {\rm d}v$. We have that 
\begin{eqnarray*}
q H_{q,z}(u) = 1-\int_{0}^{\tau(u)} e^{-\int_{t}^{\tau(u)}w_{q}(x) {\rm d}x }\left( \int_{t}^{\xi} qr^{2}(y)(1-\varphi^{z}(y)) e^{-\int_{t}^{y}w_{q}(x) {\rm d}x } {\rm d}y \right) {\rm d}t,
\end{eqnarray*}

\noindent for $u \in [0,1]$. In particular, we have
\begin{eqnarray*}
\mathbb{E}_{z}[e^{-q \zeta_{0}}] = 1-\int_{0}^{\xi} e^{-\int_{t}^{\xi}w_{q}(x) {\rm d}x }\left( \int_{t}^{\xi} qr^{2}(y)(1-\varphi^{z}(y)) e^{-\int_{t}^{y}w_{q}(x) {\rm d}x } {\rm d}y \right) {\rm d}t.
\end{eqnarray*}

\noindent Furthermore, the expectation of $\zeta_{0}$ is finite and is equal to
\begin{eqnarray*}
\mathbb{E}_{z}[\zeta_{0}] = \int_{0}^{\xi} x r^{2}(x)(1-\varphi^{z}(x)) {\rm d}x = \int_{0}^{1} \frac{1-y^{z}}{y \Phi_{c,b}(y)} e^{-m(y)} \left( \int_{y}^{1} e^{m(x)} {\rm d}x \right) {\rm d}y.
\end{eqnarray*}
\end{theorem}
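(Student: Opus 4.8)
The plan is to follow the strategy of Theorem 3.9 in \cite{La2005}, transporting the linear ODE \eqref{eq1} satisfied by $H_{q,z}$ into the coordinate in which the branching and interaction mechanisms have been absorbed. Concretely, I would set $s=\tau(u)$ and $F(s):=H_{q,z}(\varphi(s))$, and compute $F',F''$ using $\tau'(u)=-e^{m(u)}$, $\varphi'(s)=-e^{-m(\varphi(s))}$ and $m'(u)=-\Psi_{d,\rho}(u)/(u\Phi_{c,b}(u))$. Substituting into \eqref{eq1}, the two first-derivative contributions cancel exactly because $u\Phi_{c,b}(u)m'(u)=-\Psi_{d,\rho}(u)$, and, recalling $r^{2}(s)=(\varphi(s)\Phi_{c,b}(\varphi(s))e^{2m(\varphi(s))})^{-1}$, one is left with the Sturm--Liouville form $F''(s)-qr^{2}(s)F(s)=-\varphi^{z}(s)r^{2}(s)$ on $(0,\xi)$. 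This cancellation is the reason for introducing $\tau$ and $\varphi$, and it is the computational heart of the argument; it also explains the chain of Riccati transformations in Lemma \ref{lemma11}(ii)--(iii).

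Next I would linearise through the solution $w_{q}$ of \eqref{eq14} provided by Lemma \ref{lemma11}(iv). Writing $P:=qF$ and $\tilde P:=1-P$, the function $\tilde P$ solves $\tilde P''-qr^{2}\tilde P=-qr^{2}(1-\varphi^{z})$, and the Riccati identity $w_{q}'-w_{q}^{2}=-qr^{2}$ lets me factor the operator as $(\partial_{s}-w_{q})(\partial_{s}+w_{q})$. Setting $W:=\tilde P'+w_{q}\tilde P$, the problem splits into two first-order linear equations, which I integrate with the integrating factors $e^{\mp\int w_{q}}$. The two constants are fixed by the boundary data: conservativeness of $Z$ (guaranteed by Corollary \ref{corollary4} under the standing hypothesis $\mathcal{Q}_{d,\rho}^{c,b}(\theta;1)\in(-\infty,\infty)$) gives $H_{q,z}(1)=1/q$, hence $P(0)=1$ and $\tilde P(0)=0$; at the other endpoint, since $d>0$ forces $m(0)=+\infty$ and therefore $\varphi'(\xi)=-e^{-m(0)}=0$, the regularity $H_{q,z}\in C^{2}([0,1))$ from Lemma \ref{lemma11}(i) together with $w_{q}(\xi)=0$ yields $W(\xi)=\tilde P'(\xi)=0$. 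Integrating $W$ from $\xi$ and then $\tilde P$ from $0$ produces exactly the nested-integral expression for $qH_{q,z}(\varphi(s))=P(s)$, and uniqueness for this boundary value problem identifies it with the resolvent.

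The Laplace transform of $\zeta_{0}$ then follows by specialising $u=0$, i.e.\ $s=\tau(0)=\xi$: since $\{0\}$ is absorbing when $d>0$, one has $0^{Z_{t}}=\mathbf{1}_{\{Z_{t}=0\}}$ and thus $H_{q,z}(0)=\int_{0}^{\infty}e^{-qt}\mathbb{P}_{z}(\zeta_{0}\le t)\,{\rm d}t=q^{-1}\mathbb{E}_{z}[e^{-q\zeta_{0}}]$ by an integration by parts, so $\mathbb{E}_{z}[e^{-q\zeta_{0}}]=qH_{q,z}(0)=P(\xi)$, which is the stated formula. For the mean I would use $\mathbb{E}_{z}[\zeta_{0}]=\lim_{q\downarrow0}q^{-1}(1-\mathbb{E}_{z}[e^{-q\zeta_{0}}])$, cancel one factor $q$ against the $q$ inside the inner integral, and let $q\downarrow0$. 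Here $w_{q}\downarrow0$ monotonically and $\int_{0}^{\xi}w_{q}<\infty$ (Lemma \ref{lemma11}(iv)), so monotone/dominated convergence removes all exponential factors; Fubini's theorem, via $\int_{0}^{\xi}{\rm d}t\int_{t}^{\xi}{\rm d}y=\int_{0}^{\xi}y\,{\rm d}y$, then gives $\mathbb{E}_{z}[\zeta_{0}]=\int_{0}^{\xi}y\,r^{2}(y)(1-\varphi^{z}(y))\,{\rm d}y$, and the substitution $y=\tau(x)$ (so ${\rm d}y=-e^{m(x)}{\rm d}x$, $r^{2}(\tau(x))=(x\Phi_{c,b}(x)e^{2m(x)})^{-1}$, $\varphi(\tau(x))=x$) converts this into the second displayed expression; finiteness is inherited from $\mathcal{Q}_{d,\rho}^{c,b}(\theta;1)\in(-\infty,\infty)$.

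The main obstacle is the justification of the boundary behaviour at the endpoint $s=\xi$ (equivalently $u=0$): one must argue carefully that the probabilistic resolvent corresponds to the particular analytic solution selected by the nonnegative, $\xi^{-}$-vanishing Riccati solution $w_{q}$ of Lemma \ref{lemma11}(iv), rather than to a competing growing homogeneous solution, and that the resulting boundary value problem is well posed. A secondary technical point is the interchange of the limit $q\downarrow0$ with the double integral in the computation of $\mathbb{E}_{z}[\zeta_{0}]$, which I would control using the monotonicity of $q\mapsto w_{q}$ and the integrability $\int_{0}^{\xi}w_{q}<\infty$.
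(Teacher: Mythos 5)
Your proposal is correct and follows essentially the same route as the paper: the authors simply defer to the proof of Theorem 3.9 in Lambert's work, noting only that the hypothesis $\mathcal{Q}_{d,\rho}^{c,b}(\theta;1)\in(-\infty,\infty)$ guarantees conservativeness and almost sure absorption at $\{0\}$ (via Corollary \ref{corollary4} and Theorem \ref{lemma4}), and what you have written out is precisely that adaptation — the change of variables through $\tau$ and $\varphi$, the Sturm--Liouville reduction, the factorisation via the Riccati solution $w_q$, and the boundary conditions at $s=0$ and $s=\xi$. The details check out, including the cancellation of the first-order terms and the identification $\mathbb{E}_z[e^{-q\zeta_0}]=qH_{q,z}(0)$ using that $\{0\}$ is absorbing for $d>0$.
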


\subsection{Stationary distribution}

As we mentioned earlier, according to Theorem 2.2 in \cite{La2005},  when $d=0$ and the log-moment condition \eqref{logmoment} is satisfied, the LB-process is positive recurrent in $\mathbb{N}$ and it admits a stationary distribution. Thus, it is natural to ask under which conditions a (sub)critical cooperative BPI-process is positive recurrent in $\mathbb{N}$ and if it also admits a stationary distribution.

Our next result deals with the existence of a stationary distribution for (sub)critical cooperative BPI-processes. We first note that when $d = 0$, we necessarily have  $|\mathcal{S}_{0,\rho}^{c,b}(\theta; 0)| < \infty$. To see this, it is enough to observe that $d=0$ implies $\Psi_{0,\rho}(u) \leq 0$, for all $u \in [0,1]$. Let us assume that $d = 0$ and we write $\mathcal{S}_{0,\rho}^{c,b}(0;1) \coloneqq \mathcal{S}_{0,\rho}^{c,b}(\theta;1) - \mathcal{S}_{0,\rho}^{c,b}(\theta;0)$, for some $\theta \in (0,1)$, which is well-defined and may be infinite.

\begin{theorem}\label{lemma6}
Suppose that $d = 0$ and $z \in \mathbb{N}$. 
\begin{itemize}
\item[(i)] If $\mathcal{S}_{0,\rho}^{c,b}(\theta; 1)  < \infty$, for some $\theta \in (0,1)$, then $Z$ converges in law, as $t \rightarrow \infty$, towards a random variable $Z_{\infty}$ on $\mathbb{N}$ whose probability generating function is given by
\begin{eqnarray*}
\mathbb{E}\left[u^{Z_{\infty}} \right] \coloneqq  \frac{\mathcal{S}_{0,\rho}^{c,b}(0;u) }{\mathcal{S}_{0,\rho}^{c,b}(0;1)}, \hspace*{5mm} \quad \textrm{for}\quad u \in [0,1]. 
\end{eqnarray*}

In particular, if $\rho >0$, then the support of $Z_{\infty}$ is $\mathbb{N}$. If $\rho =0$, then $Z_{\infty} \equiv 1$. 

\item[(ii)] If $\mathcal{S}_{0,\rho}^{c,b}  (\theta, 1)=  \infty$ for some (and then for all) $\theta \in (0,1)$, then $\mathbb{P}_{z} \left(\lim_{t \rightarrow \infty} Z_{t} = \infty \right)= 1$. 
\end{itemize}
\end{theorem}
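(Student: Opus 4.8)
The plan is to run everything through the moment duality established earlier in the paper, which identifies the dual of $Z$ as the generalised Wright--Fisher diffusion $U=(U_t, t\ge 0)$ on $[0,1]$ with generator $\mathcal{A}g(u)=u\Phi_{c,b}(u)g''(u)+\Psi_{0,\rho}(u)g'(u)$ and duality relation $\mathbb{E}_z[u^{Z_t}]=\mathbb{E}_u[U_t^z]$ for $z\in\mathbb{N}_0$ and $u\in[0,1]$. The key observation is that the scale density of $U$ is $\exp(-\mathcal{Q}_{0,\rho}^{c,b}(\theta;\cdot))=1/\mathcal{J}_{0,\rho}^{c,b}(\theta;\cdot)$, so that $s:=\mathcal{S}_{0,\rho}^{c,b}(\theta;\cdot)$ is a scale function for $U$ and $s(U_t)$ is a local martingale. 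Since $d=0$ forces $\Psi_{0,\rho}\le 0$ while hypothesis (${\bf H}$) gives $\Phi_{c,b}\ge 0$ on $[0,1]$ (with $u\Phi_{c,b}(u)>0$ on $(0,1)$ and both factors vanishing at $1$), the endpoints $0$ and $1$ are the only candidate limit points of $U$, and $s(0)=\mathcal{S}_{0,\rho}^{c,b}(\theta;0)$ is finite. Throughout, assumption (${\bf E}$) is in force, so by Theorem \ref{Theo3} the process $Z$ is conservative.

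For (i), assume $\mathcal{S}_{0,\rho}^{c,b}(\theta;1)<\infty$, i.e.\ $s(1^-)<\infty$. Then $s(U_t)$ is a bounded martingale, hence converges almost surely, and so does $U_t$; its limit $U_\infty$ cannot lie in the interior, since a diffusion with $\sigma^2(u)=2u\Phi_{c,b}(u)>0$ on $(0,1)$ would accumulate infinite quadratic variation near any interior limit point (incompatible with the convergence of the continuous local martingale $s(U_t)$), so $U_\infty\in\{0,1\}$ almost surely. Optional stopping for the bounded martingale $s(U_t)$ then yields the classical scale formula $\mathbb{P}_u(U_\infty=1)=(s(u)-s(0))/(s(1)-s(0))=\mathcal{S}_{0,\rho}^{c,b}(0;u)/\mathcal{S}_{0,\rho}^{c,b}(0;1)$, using the difference definition of $\mathcal{S}_{0,\rho}^{c,b}(0;\cdot)$ so the common normalising factor cancels. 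As $U_\infty^z=\mathbf{1}_{\{U_\infty=1\}}$ for every $z\ge 1$, bounded convergence and duality give $\lim_{t\to\infty}\mathbb{E}_z[u^{Z_t}]=\mathbb{P}_u(U_\infty=1)=:G(u)$ for all $u\in[0,1]$, with $G(1)=1$ and $G(0)=0$. The continuity theorem for probability generating functions upgrades this to $Z_t\Rightarrow Z_\infty$ with $\mathbb{E}[u^{Z_\infty}]=G(u)$, and $G(0)=0$ shows $Z_\infty\in\mathbb{N}$. When $\rho>0$ the chain is irreducible on $\mathbb{N}$ and the $z$-independent limit law is the unique stationary distribution, hence charges every state; when $\rho=0$ one has $\Psi_{0,0}\equiv 0$, so $\mathcal{J}\equiv 1$, $\mathcal{S}_{0,\rho}^{c,b}(0;u)=u$ and $G(u)=u$, i.e.\ $Z_\infty\equiv 1$, consistent with Proposition \ref{Pro5}(ii).

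For (ii), assume $\mathcal{S}_{0,\rho}^{c,b}(\theta;1)=\infty$, i.e.\ $s(1^-)=+\infty$ while $s(0)$ remains finite. Now $s(U_t)-s(0)\ge 0$ is a nonnegative local martingale, hence a supermartingale, and converges almost surely to a finite limit; thus $s(U_\infty)<\infty$, which forces $U_\infty\ne 1$, and the same interior-exclusion argument gives $U_\infty=0$ almost surely. Consequently, by bounded convergence and duality, $\mathbb{E}_z[u^{Z_t}]=\mathbb{E}_u[U_t^z]\to 0$ for every $u\in[0,1)$ (recall $z\ge 1$); extracting coefficients shows $\mathbb{P}_z(Z_t=k)\to 0$ for each fixed $k$, and therefore $Z_t\to\infty$ in probability.

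The main obstacle is promoting this to $\mathbb{P}_z(\lim_{t\to\infty}Z_t=\infty)=1$. I would argue through the recurrence/transience dichotomy for the irreducible chain $Z$, noting that in case (ii) necessarily $\rho>0$ (since $\rho=0$ gives $\Psi_{0,0}\equiv 0$ and $\mathcal{S}_{0,\rho}^{c,b}(\theta;1)=1-\theta<\infty$), and that $d=0$ makes $\{0\}$ inaccessible, so the irreducible class is all of $\mathbb{N}$. Positive recurrence is excluded at once, since it would force $Z_t$ to converge in law to a proper stationary distribution on $\mathbb{N}$, contradicting $Z_t\to\infty$ in probability; the delicate point is to rule out \emph{null} recurrence, that is, to prove transience. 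Once transience is known, conservativeness together with irreducibility on the unbounded set $\mathbb{N}$ implies each state is visited only finitely often, whence $Z_t\to\infty$ almost surely. To establish transience I would invoke the random time-change (Lamperti-type) representation of $Z$ in terms of the modified branching process with immigration developed earlier: on the conservative event the time change is almost surely a bijection of $[0,\infty)$ onto itself, so $\lim_t Z_t=\infty$ is equivalent to the divergence of the driving process, for which the dichotomy is classical. Alternatively one can prove transience directly, either by exhibiting a bounded non-constant $Q$-harmonic function on $\{n_0,n_0+1,\dots\}$ or by showing the Green function $\int_0^\infty\mathbb{P}_{n_0}(Z_t=n_0)\,\mathrm{d}t=\lim_{q\downarrow 0}[u^{n_0}]H_{q,n_0}(u)$ is finite via the resolvent analysis. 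I expect this transience step, separating the transient regime from a hypothetical null-recurrent one purely at the level of $Z$, to be the crux, whereas the distributional statements follow cleanly from the scale-function martingale analysis of the dual diffusion.
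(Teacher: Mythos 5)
Your part (i) follows essentially the same route as the paper: the paper channels everything through the moment duality of Theorem \ref{Theo2} and then invokes Proposition \ref{lemma5}, whose proof is precisely your scale-function analysis of the dual diffusion ($\mathcal{S}_{0,\rho}^{c,b}(\theta;U_t)$ is a bounded continuous martingale, a Dubins--Schwarz/quadratic-variation argument excludes interior limit points, and optional stopping gives the exit probability $\mathcal{S}_{0,\rho}^{c,b}(0;u)/\mathcal{S}_{0,\rho}^{c,b}(0;1)$). Your identification of the support when $\rho>0$ and of the degenerate case $\rho=0$ also matches. This half is complete and correct.

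Part (ii) contains a genuine gap. You correctly observe that $\lim_{t\to\infty}\mathbb{E}_z[u^{Z_t}]=\mathbb{Q}_u(U_\infty=1)=0$ for $u\in[0,1)$ only yields convergence of $Z_t$ to $\infty$ in probability (Fatou even gives $\limsup_t Z_t=\infty$ a.s.\ for free, but not the $\liminf$), and you then \emph{defer} the upgrade to almost-sure divergence to a transience statement for $Z$ that you never prove: you list three possible routes (``the dichotomy is classical'', a bounded non-constant harmonic function, a Green-function computation) and carry out none of them, explicitly calling this step ``the crux''. Worse, the most concrete of your routes --- the Lamperti time-change combined with Proposition \ref{Pro2}(i) --- requires transience of the driving process $X$, i.e.\ $\mathcal{R}_{0,\rho}^{c,b}(\theta;1)<\infty$, whereas the hypothesis of (ii) is $\mathcal{S}_{0,\rho}^{c,b}(\theta;1)=\infty$; the implication $\mathcal{S}_{0,\rho}^{c,b}(\theta;1)=\infty\Rightarrow\mathcal{R}_{0,\rho}^{c,b}(\theta;1)<\infty$ is nontrivial and is nowhere addressed in your argument (if it failed, $X$ would be recurrent, the time-changed process would revisit low states at arbitrarily large times, and the a.s.\ statement would be in jeopardy). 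So the decisive step of (ii) is asserted rather than proved. For comparison, the paper closes (ii) entirely on the dual side, passing directly to the almost-sure limit via $\lim_t\mathbb{E}_z[u^{Z_t}]=\mathbb{E}_z[u^{\lim_t Z_t}]=\mathbb{E}_{\mathbb{Q}_u}[U_\infty^z]$ as in (\ref{eq37}) together with Proposition \ref{lemma5}(ii), and never invokes the recurrence/transience dichotomy for $Z$; your scruple about the in-probability versus almost-sure distinction is legitimate, but as written your proposal does not supply the missing argument.
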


The result in Theorem \ref{lemma6} improves those in Theorem 2.2 of \cite{La2005} and Theorem 3 of \cite{Pa2017}, which only deal with the subcritical cooperative case. Indeed, both results make strong assumptions on the offspring distribution, the former assumes the log-moment condition \eqref{logmoment} and the latter a first-moment condition. In  both cases, the imposed assumptions imply  $\mathcal{S}_{0,\rho}^{c,b}(\theta, 1)  < \infty$, for some $\theta \in (0,1)$. To see this, recall that $|\mathcal{Q}_{0,\rho}^{c,b}(\theta, 1)|  < \infty$, for some $\theta \in (0,1)$, whenever $\varsigma <0$ which clearly implies that \eqref{logmoment} is satisfied. 

Moreover, one can see from the probability generating function of the stationary distribution in Theorem \ref{lemma6} that $d=0$ and the log-moment condition \eqref{logmoment} are necessary and sufficient conditions for the stationary distribution to admit a first moment in the subcritical cooperative regime. In the critical cooperative regime, the latter assertion does not always hold. For instance, consider the BPI-process with parameters $d=0$, $c=1$, $\pi_{1} = \rho\in (0,1)$, $b_{1} = b =1$ and $\pi_{i}=b_{i} = 0$ for all $i \geq 2$. By Example \ref{Example4} below, assumption \eqref{mainE1} is satisfied, i.e.\ the associated BPI-process is conservative. Moreover  the stationary distribution is such that 
\begin{align}
\mathbb{P}(Z_{\infty}= k)=\frac{\Gamma(\rho -1+k)}{\Gamma(\rho -1)k!}, \qquad \textrm{for}\quad k \in \mathbb{N}, 
\end{align}
\noindent and whose probability generating function is given by $\mathbb{E}[u^{Z_{\infty}}]=1-(1-u)^{1-\rho}$, for $u \in [0,1]$. In other words,  $Z_\infty$ is distributed as a Sibuya distribution with parameter $1-\rho$. From the form of the distribution of $Z_\infty$, it is clear that it does not admit a first moment. This particular BPI-process appears in \cite{PaMi2019} in connection with the   Wright-Fisher diffusion with efficiency which is described as the unique strong solution, taking values on $[0,1]$, of the following SDE
\begin{eqnarray*}
{\rm d} U_t= -\rho U_t(1-U_t){\rm d}t+\sqrt{2U_t(1-U_t)^2}{\rm d}B_t, \quad \mathrm{with } \quad U_0=x,
\end{eqnarray*}
where $B=(B_t, t\ge 0)$ is a standard Brownian motion and $x \in [0,1]$; see also Theorem \ref{Theo2} in Section \ref{sec3} below for a precise statement. 

\subsection{Coming-down from infinity}

Finally, our last main result provides a condition for the (sub)critical cooperative BPI-process to come-down from infinity. We consider the usual definition of (instantaneously) coming-down from infinity, which means that the state $\{\infty\}$ is an entrance boundary for the process $Z$; see e.g., Definition 2.1 in \cite{Ban2016}. We say that a BPI-branching process (instantaneously) comes down from infinity if there exists  $t > 0$ such that $\lim_{a \rightarrow \infty} \lim_{z \rightarrow \infty } \mathbb{P}_{z}(\zeta_{a} < t) > 0$, where $\zeta_{a}  \coloneqq \inf \{t \geq 0: Z_{t} = a\}$ for $a \in \mathbb{N}_{0}$.
\begin{theorem}[Coming-down from infinity] \label{lemma9}
Suppose that $\mathcal{I}_{d,\rho}^{c,b}(\theta; 1)  < \infty$, for some $\theta \in (0,1)$. Then a (sub)critical cooperative BPI-process (instantaneously) comes down from infinity. 
\end{theorem}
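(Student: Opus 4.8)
The plan is to exploit the moment duality between the BPI-process $Z$ and the generalised Wright--Fisher diffusion $U=(U_t,t\ge 0)$ on $[0,1]$ whose generator acts on smooth $g$ by $\mathcal{G}g(u)=u\Phi_{c,b}(u)g''(u)+\Psi_{d,\rho}(u)g'(u)$, namely the dual appearing in Theorem \ref{Theo2}. A direct generator computation gives $(Q\,u^{\bullet})(z)=u^{z-1}\big(z\Psi_{d,\rho}(u)+z(z-1)\Phi_{c,b}(u)\big)$, which matches $\mathcal{G}(u\mapsto u^{z})$, so that $\mathbb{E}_z[u^{Z_t}]=\mathbb{E}_u[U_t^{z}]$ for $u\in[0,1]$. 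Under assumption (\ref{main}) with $\varsigma\le 0$, the bounds (\ref{eq02}) and (\ref{Reveq02}) yield $\Phi_{c,b}>0$ on $(0,1)$ while $\Phi_{c,b}(1)=\Psi_{d,\rho}(1)=0$, so $U$ is a non-degenerate diffusion on $(0,1)$ for which $\{1\}$ is an absorbing boundary. Letting $z\to\infty$ in the duality and using that $U_t^{z}\downarrow \mathbf 1_{\{U_t=1\}}$ (monotone/bounded convergence) gives the key identity $\lim_{z\to\infty}\mathbb{E}_z[u^{Z_t}]=\mathbb{P}_u(U_t=1)$ for every $u\in(0,1)$. Thus coming-down from infinity for $Z$ will follow once I show that $\{1\}$ is \emph{accessible} for $U$, i.e. $U$ started in the interior reaches $1$ in finite time with positive probability.

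Next I would translate the hypothesis $\mathcal{I}_{d,\rho}^{c,b}(\theta;1)<\infty$ into Feller's accessibility test for $U$ at $1$. With scale density $\mathfrak{s}(u)=\mathcal{J}_{d,\rho}^{c,b}(\theta;u)^{-1}$ and speed density $\mathfrak{m}(u)=\mathcal{J}_{d,\rho}^{c,b}(\theta;u)/(u\Phi_{c,b}(u))$, the scale and speed functions are $S=\mathcal{S}_{d,\rho}^{c,b}(\theta;\cdot)$ and $M=\mathcal{R}_{d,\rho}^{c,b}(\theta;\cdot)$, so that $\mathcal{I}_{d,\rho}^{c,b}(\theta;1)=\int_\theta^1 (M(y)-M(\theta))\,dS(y)$. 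A Tonelli interchange (all integrands are nonnegative) rewrites this as $\int_\theta^1 (S(1^-)-S(y))\,dM(y)$, which is exactly the integral governing accessibility of $1$: its finiteness is equivalent to $1$ being regular or exit, and in either case $U$ hits $1$ from the interior with positive probability in finite time. In particular $\mathcal{I}_{d,\rho}^{c,b}(\theta;1)<\infty$ forces $S(1^-)<\infty$, so $1$ sits at finite scale distance. Consequently $\mathbb{P}_u(U_t=1)>0$ for every $u\in(0,1)$ once $t$ is large enough.

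Finally I would transfer this to the hitting times of $Z$. Since the only negative jumps of $Z$ have size one, $Z$ is downward skip-free, so for $z>a$ the event $\{Z_{t/2}\le a\}$ forces $\zeta_a\le t/2<t$, giving $\mathbb{P}_z(\zeta_a<t)\ge\mathbb{P}_z(Z_{t/2}\le a)$. Splitting the generating function, for $u\in(0,1)$ one has $\mathbb{E}_z[u^{Z_{t/2}}]\le \mathbb{P}_z(Z_{t/2}\le a)+u^{a+1}$, hence $\mathbb{P}_z(\zeta_a<t)\ge \mathbb{E}_z[u^{Z_{t/2}}]-u^{a+1}$. Letting $z\to\infty$ via the key identity and then $a\to\infty$ yields $\lim_{a\to\infty}\liminf_{z\to\infty}\mathbb{P}_z(\zeta_a<t)\ge \mathbb{P}_u(U_{t/2}=1)$, which is strictly positive for a suitable $u\in(0,1)$ and large $t$; a monotone coupling in the initial value (using skip-freeness) shows the inner limit exists, so this is precisely the quantity in the definition of coming-down from infinity.

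The main obstacle I expect is the rigorous boundary and limiting-duality analysis rather than the probabilistic bookkeeping. One must confirm that $U$ is well posed on $[0,1]$ with $1$ absorbing (this is where $\varsigma\le 0$ enters, via (\ref{eq02})--(\ref{Reveq02})), that the moment duality of Theorem \ref{Theo2} remains valid up to and including explosion of $Z$ (so that $u^{Z_t}$ is read as $0$ on $\{Z_t=\infty\}$), and that the Tonelli identity matches $\mathcal{I}_{d,\rho}^{c,b}(\theta;1)$ with the Feller accessibility integral in all the degenerate cases where $S(1^-)$ or $M(1^-)$ is infinite. Once these are secured, the skip-free bound, the generating-function split, and the dominated convergence step are routine.
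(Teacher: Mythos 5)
Your proposal is essentially correct and shares its first half with the paper's argument: both start from the moment duality of Theorem \ref{Theo2}, let $z\to\infty$ to obtain $\lim_{z\to\infty}\mathbb{E}_{z}[u^{Z_{t}}]=\mathbb{Q}_{u}(U_{t}=1)$, and use the Feller classification (Lemma \ref{lemma15} together with Remark \ref{remark9} and the standing assumption \eqref{mainE1}) to see that $\mathcal{I}_{d,\rho}^{c,b}(\theta;1)<\infty$ makes $1$ an \emph{exit} boundary for $U$, whence $\mathbb{Q}_{u}(U_{t}=1)=\mathbb{Q}_{u}(\tau_{1}\leq t)>0$. Where you genuinely diverge is in the final step. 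The paper upgrades the pointwise limit to weak convergence of the laws of $Z_{t}$ under $\mathbb{P}_{z}$ towards a probability measure $\eta_{t}$ on $\mathbb{N}_{0}$ via L\'evy's continuity theorem (using $\lim_{u\to1}\mathbb{Q}_{u}(\tau_{1}\leq t)=1$), extends $(P_{t})$ to a Feller semigroup on $\mathbb{N}_{0,\infty}$ with $P_{t}f(\infty)=\sum_{z}f(z)\eta_{t}(\{z\})$, and invokes the discrete analogue of Foucart's Lemma 6.3 to get $\mathbb{P}_{\infty}(\zeta=0)=1$ (Lemma \ref{lemma20}). You instead give a direct, elementary estimate: downward skip-freeness gives $\mathbb{P}_{z}(\zeta_{a}<t)\geq\mathbb{P}_{z}(Z_{t/2}\leq a)\geq\mathbb{E}_{z}[u^{Z_{t/2}}]-u^{a+1}$, and the double limit in $a$ and $z$ (both monotone by skip-free couplings) is bounded below by $\mathbb{Q}_{u}(U_{t/2}=1)>0$, which is exactly the definition used in the paper. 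Your route is shorter and self-contained for the bare statement; the paper's route buys more, namely the explicit construction of the entrance law $\mathbb{P}_{\infty}$ and of the process started from infinity, which is needed elsewhere (e.g.\ for the formula for $\mathbb{E}_{\infty}[e^{-q\zeta_{0}}]$). One caveat to make explicit in your write-up: mere accessibility of $1$ ("regular or exit") is not enough for $\mathbb{Q}_{u}(U_{t}=1)>0$ — a regular \emph{reflecting} boundary would spoil the identity — so you must use that under \eqref{mainE1} one has $\mathcal{E}_{d,\rho}^{c,b}(\theta;1)=\infty$ and hence that $1$ is exit (absorbing); you flag this in your list of obstacles, and it is precisely what Lemma \ref{lemma15} and Remark \ref{remark9} deliver.
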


It is important to note that in the subcritical cooperative regime (i.e., $\varsigma <0$), we always have that $\mathcal{I}_{d,\rho}^{c,b}(\theta; 1)  < \infty$, for some $\theta \in (0,1)$; see proof of Lemma \ref{lemma15} in the Appendix. 

In Section \ref{sec5}, we will construct, under the assumptions given in Theorem \ref{lemma9}, the law of the (sub)critical cooperative BPI-process starting from infinity, say $\mathbb{P}_{\infty}$. In particular, if $\mathcal{Q}_{d,\rho}^{c,b}(\theta; 1)  \in (-\infty, \infty)$, for some $\theta \in (0,1)$, Theorem \ref{Lamtype} implies that
\begin{eqnarray*}
\mathbb{E}_{\infty}[e^{-q \zeta_{0} }] = \exp \left( -\int_{0}^{\xi} w_{q}(x) {\rm d} x \right), \hspace*{4mm} q >0,
\end{eqnarray*}
\noindent where $w_{q}(\cdot)$ is defined in Lemma \ref{lemma11} part (iv); this follows by using a similar identity as for (14) in \cite{La2005}.\\

The previous results  leave some open questions that we are planning to answer in the near future. A natural and interesting question in this setting is the speed at which the process comes down from infinity; see for instance Aldous \cite{Aldous1999} for the case of the counting block of the Kingman coalescent or Bansaye et al.\ \cite{Ban2016} for general birth-death processes. On the other hand, Kyprianou, et al.\ \cite{Kyprianou2017} studied the excursions from infinity of the counting block of a ``fast'' fragmentation-coalescence process that is a version of the BPI-process with $d=\rho=b=0$, $c>0$ and an extra-birth rate that sends the process to $\{\infty\}$. Their goal was to analyse how the extreme form of fragmentation ``competes'' against the Kingman's coalescent rate (i.e., the competition rate in the BPI-process). Following \cite{Kyprianou2017}, it would be of interest to study the excursions of the BPI-process (in the general setting) from infinity in order to see how the cooperation mechanism acts against the Kingman's coalescent rate.\\

In this work, we follow a different route than in Lambert \cite{La2005} and in Gonz\'alez Casanova et al. \cite{Pa2017} to deduce our results and instead we study directly the semigroup of the BPI-process. This route is closer to the approach used by Foucart \cite{Cle2017} and  Leman and Pardo \cite{Leman},  where the  CSBP with logistic competition with fixed environment and in a Brownian environment are  studied, respectively, but with the difference that competition is now represented as an extra jump downwards, and besides that our process possesses a cooperation mechanism, events that are not considered in \cite{Cle2017}. On the other hand, note that  BPI-processes do not satisfy the branching property and thus most of the traditional arguments for branching processes are not applicable. 

Our arguments are based on a random time-change in Lamperti's fashion. Indeed we show  that any BPI-process is a time-change  of a modified branching process with migration, which is introduced and studied in this work. The motivation for introducing this process comes from  Caballero et al.\ \cite{MAC09} (CSBP case),  Lambert \cite{La2005} (CSBP with logistic competition) and Lambert \cite{Lamb2008} where a particular case of the underlying  process has appeared (without cooperation, i.e.\ $b=0$); note that no formal proof or further properties of such process have been reported to the best of our knowledge.

We deeply analyse the behaviour of this modified branching process with migration in order to understand the behaviour of the associated BPI-process. Then, we deduce Theorems \ref{Theo1} and  \ref{Theo3};  and  Propositions \ref{lemma3} and \ref{Pro5}.  Our second tool to analyse the long term behaviour of BPI-processes is a remarkable duality connection with a generalisation of the Wright-Fisher diffusion. This duality relationship has been established in \cite[Theorem 2]{Pa2017} for the subcritical BPI-process to study similar problems. A more precise definition and construction of this dual will be given later in this work. Next, we show that this duality is valid even in the critical regime and prove some new properties of such generalised Wright-Fisher diffusions which may be of independent interest. We then use these results to show Theorems \ref{lemma4}, \ref{lemma6} and \ref{lemma9}.  

 The remainder of this manuscript  is organised as follows. In Section \ref{Sectionex}, we present some examples that show the applicability of our results for the event of explosion. In Section \ref{sec1}, we introduce the modified branching process with migration and prove that it is the underlying process in the Lamperti-type  representation of BPI-processes. In this section, we also show some useful properties of this modified branching process with migration. Section \ref{sec2} is devoted to the proofs of Theorem \ref{Theo1} and  \ref{Theo3}, and Propositions \ref{lemma3} and \ref{Pro5} which are derived from the aforementioned time-change. Section \ref{sec3} deals with the duality relationship between  BPI-processes and generalisations of the Wright-Fisher diffusion as well as some path properties of the latter which will be useful for the sequel. In Section \ref{sec4}, we prove Theorems \ref{lemma4} and  \ref{lemma6}, and we underline the main ideas of the proofs of Lemma \ref{lemma11} and Theorem \ref{Lamtype}. Finally, we establish Theorem \ref{lemma9} in Section \ref{sec5}. 

\section{Examples}\label{Sectionex}
Before we present the proofs of our main results, we present some examples that show the applicability of Theorems \ref{Theo1} and \ref{Theo3}. The first example  concerns to subcritical cooperative BPI-processes.

\begin{example}
Subcritical cooperative BPI-processes (including LB-processes) never explodes under the log-moment condition \eqref{logmoment}. We introduce the notation, 
\begin{equation}\label{cebolla0}
\overline{\pi}_{k} := \sum_{i \geq k} \pi_{i} \qquad\quad \textrm{and}\quad \qquad \overline{b}_{k} := \sum_{i \geq k} b_{i}, \qquad \textrm{for}\quad k \in \mathbb{N}.
\end{equation} 
We also note that the branching  and the interaction mechanisms can be rewritten as follows 
 \begin{equation} \label{cebolla}
 \Psi_{d,\rho}(u) = (1-u)\left(d- \sum_{i \geq 1} \overline{\pi}_{i} u^{i}\right) \quad \textrm{ and } \quad\Phi_{c,b}(u) = (1-u)\left(c- \sum_{i \geq 1}  \overline{b}_{i} u^{i}\right),
 \end{equation}
\noindent for $u \in [0,1]$. From (\ref{cebolla}), we see that
\begin{eqnarray} \label{eq02}
-\varsigma(1-u) \leq \Phi_{c,b}(u) \leq c(1-u), \hspace*{5mm} \text{for}  \hspace*{3mm} u \in [0,1].
\end{eqnarray}
Note that \eqref{logmoment} is equivalent to $\sum_{k \geq 1} k^{-1}\overline{\pi}_{k} < \infty$. If $\varsigma < 0$, then \eqref{eq02} implies that $|\mathcal{Q}_{d, \rho}^{c,b}(\theta;1)| < \infty$, for $\theta \in (0,1)$, and by Corollary \ref{corollary4}, subcritical cooperative BPI-processes do not explode whenever \eqref{logmoment} is satisfied. In particular, the stronger condition $\sum_{i \geq 1} i \pi_{i}  < \infty$ made in \cite{Pa2017} is clearly a sufficient condition for nonexplosion of subcritical cooperative BPI-processes. Indeed, Lemma \ref{lemma1Red} allows us to directly deduce the above conclusion.
\end{example}

The next examples deal with the critical cooperative regime and consider cases where $\sum_{i\ge 1}i\pi_i$ may be finite or infinite.
\begin{example} \label{example2new}
Assume that $d=0$, $\pi_{1} = 0 $ and $\pi_i=\frac{1}{i(i-1)}$, for $i=2, 3, \ldots$. In this case, we have that $\rho=1$ and  $\sum_{i\ge 1}i\pi_i=\infty$. Moreover, by  Taylor  expansion of $\log(1-u)$, we deduce  
\begin{align*}
\Psi_{0,1}(u)=-u(1-u)\left(1-\log(1-u)\right),\qquad \textrm{for} \quad u\in[0,1].
\end{align*}
\noindent Suppose that $c=\sum_{i\ge 1}ib_i$ (i.e., the critical cooperative regime) and observe  that $\sum_{i\ge 1}ib_i = \sum_{i \geq 1} \overline{b}_{i}$, where $\overline{b}_{i}$ is defined in (\ref{cebolla0}). Then, it follows from  (\ref{cebolla}) that $\Phi_{c,b}$ can be rewritten as 
\begin{align} \label{manzana1}
\Phi_{c,b}(u)= (1-u) \sum_{i \geq 1} (1-u^{i}) \overline{b}_{i} = (1-u)^2 \sum_{i\ge 0}b_{i}^{\ast} u^i, \qquad \textrm{for} \quad u\in[0,1],
\end{align}
where $b_{k}^{\ast}=\sum_{j>k} \overline{b}_j$, for $k \in \mathbb{N}_{0}$. If we assume that
\begin{align} \label{manzana2}
b_{n} \sim \frac{C\alpha (\alpha -1)}{\Gamma(2-\alpha)} n^{-\alpha-1}, \qquad \textrm{as} \quad n\to \infty,
\end{align}
\noindent for $\alpha\in(1,2)$ and $C>0$ a constant, then, by applying \cite[Lemma 4.4] {Steffanson2015} twice, we deduce 
\begin{align*}
b_{n}^{\ast} \sim Cn^{1-\alpha}/\Gamma(2-\alpha), \qquad \textrm{as}\quad n\to \infty;
\end{align*}
\noindent where $\Gamma(\cdot)$ denotes the so-called gamma function. On the other hand, \cite[Theorem XIII.5.5]{Feller1966} implies that
\begin{align*}
\sum_{i \ge 0} b_{i}^{\ast} u^{i} \sim C(1-u)^{\alpha-2}, \qquad \textrm{as} \quad u\to 1,
\end{align*}
\noindent and thus, from (\ref{manzana1}), we have 
\begin{align*}
\Phi_{c,b}(u)\sim C(1-u)^\alpha, \qquad \textrm{as} \quad u\to 1.
\end{align*}
\noindent Therefore, for all $\varepsilon \in (0,1)$, there exists $\theta \in (0,1)$ such that, for $u \in (\theta, 1]$,
\begin{align}
(1- \varepsilon) C (1-u)^{\alpha} & \leq  \Phi_{c,b}(u) \leq (1+\varepsilon) C (1-u)^{\alpha}  \label{pera2}. 
\end{align}
\noindent In particular, 
\begin{align*}
|\mathcal{Q}_{0, 1}^{c,b}(\theta;1)|\le \frac{1}{(1-\epsilon)C}\int_{\theta}^1 \frac{1-\log(1-u)}{(1-u)^{\alpha-1}}{\rm d} u<\infty,
\end{align*}
\noindent and by  Corollary \ref{corollary4}, the critical cooperative BPI-process with the above parameters is conservative.
\end{example}

\begin{example} \label{example3new}
Assume again that $d=0$ and  $\rho =1$. Furthermore, we assume that 
$\pi_{n} \sim \frac{C^{\prime}(1-\beta)}{\Gamma(\beta)}n^{\beta-2}$, as $n \rightarrow \infty$, for
$\beta\in(0,1)$ and a constant $C^{\prime}>0$. In this case, $\sum_{i\ge 1}i\pi_i=\infty$. On the one hand, it follows from \cite[Lemma 4.4] {Steffanson2015} 
\[
\overline{\pi}_{n} \sim \frac{C^{\prime}}{\Gamma(\beta)}n^{\beta-1},\qquad  \textrm{as}\qquad n \rightarrow \infty,
\]
 where $\overline{\pi}_{n}$ is defined in (\ref{cebolla0}). On the other hand, by \eqref{cebolla} and \cite[Theorem XIII.5.5]{Feller1966}, we have
\begin{align*}
\Psi_{0,1}(u)\sim - C^{\prime}u(1-u)^{1-\beta},\qquad \textrm{as}\quad u\to 1.
\end{align*}
\noindent As in Example \ref{example2new}, we suppose that $c=\sum_{i\ge 1}ib_i$ (i.e., the critical cooperative regime) and that \eqref{manzana2} is satisfied. Thus, we have that  
$\Phi_{c,b}(u)\sim C(1-u)^\alpha$, as $u\to 1$, where $\alpha \in (1,2)$. 

We split our analysis in three different cases depending on the values of $\alpha$ and $\beta$. In order to do so, we observe that for all $\varepsilon \in (0,1)$, there exists $\theta \in (0,1)$ such that, for $u \in (\theta, 1]$,
\begin{align} \label{pera}
 - (1+ \varepsilon) C^{\prime} u (1-u)^{1-\beta} & \leq  \Psi_{0,1}(u)  \leq - (1- \varepsilon) C^{\prime} u (1-u)^{1-\beta},
\end{align}
\noindent and \eqref{pera2} holds. Then,  it follows from \eqref{pera2} and \eqref{pera} that for every $\varepsilon \in (0,1)$ there exists $\theta \in (0,1)$ such that 
\begin{align} \label{zanahoria}
-\frac{1+\varepsilon}{1-\varepsilon}\frac{C^{\prime}}{C}\int_{\theta}^{x} (1-u)^{1-\beta-\alpha}{\rm d} u \leq \mathcal{Q}_{0, 1}^{c,b}(\theta;x) \leq -\frac{1-\varepsilon}{1+\varepsilon}\frac{C^{\prime}}{C}\int_{\theta}^{x} (1-u)^{1-\beta-\alpha}{\rm d} u, \quad \text{for} \quad x \in (\theta, 1). 
\end{align}

{\bf Case 1.} If $2-\alpha>\beta$, it follows from \eqref{zanahoria} that there exists $\theta \in (0,1)$ such that $|\mathcal{Q}_{0, 1}^{c,b}(\theta;1)|< \infty$.

\noindent Then, by  Corollary \ref{corollary4}, we deduce that the associated BPI-process is conservative.

{\bf Case 2.} Suppose that $2-\alpha=\beta$. Then, it follows from \eqref{zanahoria} that $\mathcal{Q}_{0, 1}^{c,b}(\theta;1)=-\infty$, for some $\theta \in (0,1)$. 
On the other hand, for every $\varepsilon \in (0,1)$ there exists $\theta \in (0,1)$ such that, by \eqref{pera2} and \eqref{zanahoria}, 
\begin{align*}
\mathcal{R}_{0, 1}^{c,b}(\theta ;x) &\ge  \frac{1}{(1+\epsilon)C}\int_{\theta}^{x}\frac{1}{u(1-u)^\alpha}\exp\left(-\left(\frac{1+\epsilon}{1-\epsilon}\right)\frac{C^\prime}{C}\log\left(\frac{1-\theta}{1-u}\right)\right){\rm d} u\\
&=\frac{(1-\theta)^{-\left(\frac{1+\epsilon}{1-\epsilon}\right)\frac{C^\prime}{C}}}{(1+\epsilon)C} \int_{\theta}^{x} u^{-1}(1-u)^{\left(\frac{1+\epsilon}{1-\epsilon}\right)\frac{C^\prime}{C}-\alpha} {\rm d} u,
\end{align*}
\noindent for $x \in (\theta, 1)$. Thus, if $1+\left(\frac{1+\epsilon}{1-\epsilon}\right)\frac{C^\prime}{C}\le \alpha$, we have $\mathcal{R}_{0, 1}^{c,b}(\theta;1)=\infty$. In particular, if $2-\alpha=\beta$ and $1+\frac{C^\prime}{C}< \alpha$, from the previous inequalities and  Theorem \ref{Theo3} we deduce that  the associated BPI process is conservative.

Similarly, for every $\varepsilon \in (0,1)$ there exists $\theta \in (0,1)$ such that, by \eqref{pera2} and \eqref{zanahoria}, 
\begin{align*}
\mathcal{R}_{0, 1}^{c,b}(\theta ;x) & \le  \frac{1}{(1-\epsilon)C}\int_{\theta}^{x}\frac{1}{u(1-u)^\alpha}\exp\left(-\left(\frac{1-\epsilon}{1+\epsilon}\right)\frac{C^\prime}{C}\log\left(\frac{1-\theta}{1-u}\right)\right){\rm d} u\\
 & = \frac{(1-\theta)^{-\left(\frac{1-\epsilon}{1+\epsilon}\right)\frac{C^\prime}{C}}}{(1-\epsilon)C} \int_{\theta}^{x} u^{-1}(1-u)^{\left(\frac{1-\epsilon}{1+\epsilon}\right)\frac{C^\prime}{C}-\alpha}{\rm d} u,
\end{align*}
\noindent for $x \in (\theta, 1)$. If $1+\left(\frac{1-\epsilon}{1+\epsilon}\right)\frac{C^\prime}{C}> \alpha$, we then have $\mathcal{R}_{0, 1}^{c,b}(\theta;1)<\infty$.   On the other hand, a similar computation shows that, for every $\varepsilon \in (0,1)$ there exists $\theta \in (0,1)$ such that
\begin{align*}
\mathcal{R}_{0, 1}^{c,b}(u;x) \leq \frac{(1-u)^{-\left(\frac{1-\epsilon}{1+\epsilon}\right)\frac{C^\prime}{C}}}{(1-\epsilon)C}   \frac{1}{1+\left(\frac{1-\epsilon}{1+\epsilon}\right)\frac{C^\prime}{C}-\alpha} u^{-1}\left( (1-u)^{1+\left(\frac{1-\epsilon}{1+\epsilon}\right)\frac{C^\prime}{C}-\alpha } - (1-x)^{1+\left(\frac{1-\epsilon}{1+\epsilon}\right)\frac{C^\prime}{C}-\alpha} \right),
\end{align*}
\noindent for $\theta < u \leq x < 1$, which implies that 
\begin{align*}
\mathcal{E}_{0, 1}^{c,b}(\theta;1)\le \frac{1}{(1-\epsilon)C(1+\left(\frac{1-\epsilon}{1+\epsilon}\right)\frac{C^\prime}{C}-\alpha)} \frac{C_{\alpha,\epsilon}}{\theta}\int_{\theta}^{1} u^{-1} (1-u)^{1-\alpha}{\rm d} u < \infty.
\end{align*}
\noindent In other words, if $2-\alpha=\beta$ and $1+\frac{C^\prime}{C}>\alpha$, we have that $\mathcal{E}_{0, 1}^{c,b}(\theta;1)$ is finite and by Theorem \ref{Theo3} the associated BPI-process explodes almost surely. 

{\bf Case 3.} Finally, we consider the case $2-\alpha<\beta$. In this case, similarly as before, for every $\varepsilon \in (0,1)$ there exists $\theta \in (0,1)$ such that, by \eqref{pera2} and \eqref{zanahoria},
\begin{align*}
\mathcal{R}_{0, 1}^{c,b}(\theta;x) & \leq \frac{1}{(1-\epsilon)C} \int_{\theta}^{x}\frac{1}{u(1-u)^\alpha}\exp\left(-  \frac{1-\varepsilon}{1+\varepsilon}\frac{C^{\prime}}{C(2-\beta-\alpha)}\left((1-\theta)^{2-\beta-\alpha}-(1-u)^{2-\beta-\alpha}\right)\right) {\rm d} u \nonumber \\
& = C(\varepsilon, \theta, \alpha, \beta)\int_{\theta}^{x}\frac{1}{(1-u)^\alpha}\exp\left(-  \frac{1-\varepsilon}{1+\varepsilon}\frac{C^{\prime} (1-u)^{2-\beta-\alpha}}{C(\beta+\alpha-2)} \right) {\rm d} u, \qquad \textrm{for}\quad x \in (\theta,1),
\end{align*}
where
\[
C(\varepsilon, \theta, \alpha, \beta)=\frac{1}{(1-\epsilon)\theta C} \exp\left(-  \frac{1-\varepsilon}{1+\varepsilon}\frac{C^{\prime}}{C(2-\beta-\alpha)}(1-\theta)^{2-\beta-\alpha}\right).
\]
By making the change of variables $y=\frac{1-\varepsilon}{1+\varepsilon}\frac{C^{\prime} (1-u)^{2-\beta-\alpha}}{C(\beta+\alpha-2)}$, we obtain that
\begin{align*}
\mathcal{R}_{0, 1}^{c,b}(\theta;1) & \leq C(\varepsilon, \theta, \alpha, \beta)(\beta+\alpha-2)^{\frac{\beta-1}{2-\beta-\alpha}} \left(\frac{1+\varepsilon}{1-\varepsilon} \frac{C}{C^{\prime}} \right)^{\frac{1-\alpha}{2-\beta-\alpha}} \int_{\frac{1-\varepsilon}{1+\varepsilon}\frac{C^{\prime} (1-\theta)^{2-\beta-\alpha}}{C(\beta+\alpha-2)}}^{\infty} y^{\frac{\alpha-1}{\beta+\alpha-2}-1}e^{-y}{\rm d} y \nonumber \\
 & = C(\varepsilon, \theta, \alpha, \beta)(\beta+\alpha-2)^{\frac{\beta-1}{2-\beta-\alpha}} \left(\frac{1+\varepsilon}{1-\varepsilon} \frac{C}{C^{\prime}} \right)^{\frac{1-\alpha}{2-\beta-\alpha}}  \Gamma\left(\frac{\alpha-1}{\beta+\alpha-2};\frac{1-\varepsilon}{1+\varepsilon}\frac{C^{\prime} (1-\theta)^{2-\beta-\alpha}}{C(\beta+\alpha-2)}\right),
\end{align*}
\noindent where $\Gamma(a;z)=\int_z^\infty u^{a-1}e^{-u}{\rm d}u$, for $a,z \geq 0$, 
is the so-called incomplete Gamma function. In particular, $\mathcal{R}_{0, 1}^{c,b}(\theta;1)$ is finite. 

On the other hand, a similar computation shows that, for every $\varepsilon \in (0,1)$ there exists $\theta \in (0,1)$ such that, by \eqref{pera2} and \eqref{zanahoria}, 
\begin{align*}
\mathcal{R}_{0,1}^{c,b}(u;1) & \geq \frac{e^{\frac{1+\varepsilon}{1-\varepsilon}\frac{C^{\prime} (1-u)^{2-\beta-\alpha}}{C(\beta+\alpha-2)}} }{(1+\epsilon)C} \int_{u}^{1}\frac{1}{w(1-w)^\alpha}\exp\left(-  \frac{1+\varepsilon}{1-\varepsilon}\frac{C^{\prime} (1-w)^{2-\beta-\alpha}}{C(\beta+\alpha-2)} \right) {\rm d} w \nonumber \\
& \geq e^{\frac{1+\varepsilon}{1-\varepsilon}\frac{C^{\prime} (1-u)^{2-\beta-\alpha}}{C(\beta+\alpha-2)}}K(\varepsilon, \alpha, \beta) \Gamma\left(\frac{\alpha-1}{\beta+\alpha-2};\frac{1+\varepsilon}{1-\varepsilon}\frac{C^{\prime} (1-u)^{2-\beta-\alpha}}{C(\beta+\alpha-2)}\right),
\end{align*}
\noindent for $u \in (\theta, 1)$ and where
\[
K(\varepsilon, \alpha, \beta)=\frac{ (\beta+\alpha-2)^{\frac{\beta-1}{2-\beta-\alpha}}}{(1+\epsilon)C} \left(\frac{1-\varepsilon}{1+\varepsilon} \frac{C}{C^{\prime}} \right)^{\frac{1-\alpha}{2-\beta-\alpha}}. 
\]
The previous inequality implies that 
\begin{align*}
\mathcal{E}_{0, 1}^{c,b}(\theta;1)\ge  K(\varepsilon, \alpha, \beta) \int_{\theta}^{1} e^{\frac{1+\varepsilon}{1-\varepsilon}\frac{C^{\prime} (1-u)^{2-\beta-\alpha}}{C(\beta+\alpha-2)}} \Gamma\left(\frac{\alpha-1}{\beta+\alpha-2};\frac{1+\varepsilon}{1-\varepsilon}\frac{C^{\prime} (1-u)^{2-\beta-\alpha}}{C(\beta+\alpha-2)}\right) {\rm d} u.
\end{align*}
\noindent By making the change of variables $y=\frac{1+\varepsilon}{1-\varepsilon}\frac{C^{\prime} (1-u)^{2-\beta-\alpha}}{C(\beta+\alpha-2)}$, we obtain that
\begin{align*}
\mathcal{E}_{0, 1}^{c,b}(\theta;1)\ge K(\varepsilon, \alpha, \beta)  \left(\frac{1-\varepsilon}{1+\varepsilon} \frac{C (\beta+\alpha-2)}{C^{\prime}} \right)^{\frac{\beta+\alpha-1}{2-\beta-\alpha}} \int_{\frac{1+\varepsilon}{1-\varepsilon}\frac{C^{\prime} (1-\theta)^{2-\beta-\alpha}}{C(\beta+\alpha-2)}}^{\infty} y^{-\frac{\beta+\alpha-1}{\beta+\alpha-2}}e^{y}\Gamma\left(\frac{\alpha-1}{\beta+\alpha-2};y\right) {\rm d} y.
\end{align*}
On the other hand, since $\Gamma(a;y)\sim y^{a-1}e^{-y}$, as $y \rightarrow \infty$, for $a >0$ (see for instance \cite{Temme75}), the above inequality implies that $\mathcal{E}_{0, 1}^{c,b}(\theta;1)=\infty$ and we cannot conclude from our results if the associated BPI-process is conservative or explodes with positive probability.
\end{example}

\begin{example}\label{Example4}
Assume that $d=0$ and $\pi_{1} = \rho$. Then, it is clear that $\Psi_{0,1}(u)=-\rho u(1-u)$, for  $u\in[0,1]$. Suppose also that $c=1$ and $b=b_1=1$ (i.e., the associated BPI-process is critical cooperative). Then, $\Phi_{1,1}(u)= (1-u)^2$, for $u\in[0,1]$. Therefore, for $\theta \in (0,1)$,
\begin{align*}
\mathcal{R}_{0, \rho}^{1,1}(\theta;1)=\int_\theta^1 \frac{1}{u(1-u)^2}\exp\left(\rho \log\left(\frac{1-u}{1-\theta}\right)\right){\rm d} u=\frac{1}{(1-\theta)^{\rho}}\int_\theta^{1} u^{-1}(1-u)^{\rho-2}{\rm d}u.
\end{align*}
\noindent If $\rho \leq 1$, then $\mathcal{R}_{0, \rho}^{1,1}(\theta;1)=\infty$ and by Theorem \ref{Theo3}, we conclude that the associated BPI process is conservative. On the other hand, if $\rho>1$ we have that $\mathcal{R}_{0, \rho}^{1,1}(\theta;1)$ is finite, for any $\theta\in(0,1)$. Similarly, we observe $\mathcal{E}_{0, \rho}^{1,1}(\theta;1) \geq (\rho-1)^{-1} \int_\theta^1 (1-u)^{-1} {\rm d}u=\infty$, and we cannot conclude from our results. Nonetheless from Theorem 1 in \cite{Pa2017}, we know that the associated BPI-process is conservative.
\end{example}

\begin{example}
Assume that $ \sum_{i \geq 1} i \pi_{i} < \infty$ and $d=0$. As in Example \ref{example2new}, we also suppose that $c=\sum_{i\ge 1}ib_i$ (i.e., the critical cooperative regime) and that \eqref{manzana2} is satisfied. Thus, we have that  
$\Phi_{c,b}(u)\sim C(1-u)^\alpha$, as $u\to 1$, where $\alpha \in (1,2)$. In this case, since $\mathbf{m}:=\sum_{i \geq 1} i \pi_{i}= \sum_{i \geq 1} \overline{\pi}_{i}$ and $\Psi_{0,\rho}(u)\sim -\mathbf{m}(1-u)$, as $u\to 1$ (see the first identity in  \eqref{cebolla}), we have that
\begin{align}
\lim_{u \uparrow 1} \frac{(1-u)^{-1+\alpha}\Psi_{0,\rho}(u)}{u\Phi_{c,b}(u)} = -\frac{\mathbf{m}}{C}.
\end{align}
\noindent In particular, the above implies that $| \mathcal{Q}_{0, \rho}^{c,b}(\theta;1)| < \infty$, for some $\theta \in (0,1)$. Then, by  Corollary \ref{corollary4}, we deduce that the associated BPI-process is conservative.
\end{example}

\begin{example}
Once again, suppose that $c=\sum_{i\ge 1}ib_i$ (i.e., the critical cooperative regime). By \eqref{eq02}, we observe that, for $0<\theta \leq x < 1$,
\begin{align*} 
\mathcal{Q}_{0,\rho}^{c,b}(\theta; x) & = -\int_{\theta}^{x} \frac{d(1-w)}{w \Phi_{c,b}(w)} {\rm d} w  + \mathcal{Q}_{d,\rho}^{c,b}(\theta; x) \geq -\int_{\theta}^{x} \frac{d}{cw} {\rm d} w  + \mathcal{Q}_{d,\rho}^{c,b}(\theta; x)   \geq -\frac{d}{c} \ln(1/\theta) + \mathcal{Q}_{d,\rho}^{c,b}(\theta; x).
\end{align*}
\noindent In particular, it follows that there exists a constant $0 < C_{\theta}<\infty$ such that
\begin{align}  \label{cactus1}
\mathcal{R}_{0,\rho}^{c,b}(\theta; x) \geq C_{\theta} \int_{\theta}^{x} \frac{1}{u \Phi_{c,b}(u)} \exp \Big( \int_{\theta}^{u} \frac{\Psi_{d,\rho}(w)}{w \Phi_{c,b}(w)} {\rm d} w \Big) {\rm d} u.
\end{align}
\noindent Suppose first that $\mathbf{m}_d:=-d + \sum_{i \geq 1} i \pi_{i}  \leq 0$. Then, $\Psi_{d,\rho}(u) \geq 0$, for all $u \in [0,1]$. Moreover, since $c=\sum_{i\ge 1}ib_i$, we also have that $\Phi_{c,b}(u) \geq 0$, for all $u \in [0,1]$. Then,
\begin{align*} 
\mathcal{R}_{0,\rho}^{c,b}(\theta; 1) \geq C_{\theta} \int_{\theta}^{x} \frac{{\rm d} u}{u \Phi_{c,b}(u)}.
\end{align*}
\noindent In particular, the right-hand side of \eqref{eq02} implies that $\mathcal{R}_{0,\rho}^{c,b}(\theta; 1) = \infty$, for some $\theta \in (0,1)$, whenever $\mathbf{m}_d \leq 0$. That is, by Theorem \ref{Theo3}, the associated BPI-process is conservative.

Define $\sigma_{c,b}^{2} = c+ \sum_{i \geq 1} i^{2} b_{i}$ and suppose that 
\begin{align} \label{cactus2}
0 < \mathbf{m}_d < \frac{\sigma_{c,b}^{2}}{2} < \infty,
\end{align}
\noindent Recall that $b_{k}^{\ast}=\sum_{j>k} \overline{b}_j$, for $k \in \mathbb{N}_{0}$. Since $\mathbf{m}_d < \infty$ and $\sum_{i \geq 0}b_{i}^{\ast} = \sigma_{c,b}^{2}/2$, it follows from \eqref{cebolla} and \eqref{manzana1} that
\begin{align*}
\lim_{u \uparrow 1} \frac{(1-u)^{2}}{u\Phi_{c,b}(u)} = \frac{2}{\sigma_{c,b}^{2}} \qquad \text{and} \qquad \lim_{u \uparrow 1} \frac{(1-u)\Psi_{d,\rho}(u)}{u\Phi_{c,b}(u)} = -\frac{2 \mathbf{m}_d}{\sigma_{c,b}^{2}}.
\end{align*}
\noindent In particular, for all $\varepsilon \in (0, \min(2/\sigma_{c,b}^{2}, 1-2\mathbf{m}_d/\sigma_{c,b}^{2}))$, there exists $\theta^{\prime} \in (0,1)$  such that, for $u \in (\theta^{\prime},1)$,
\begin{align*}
\frac{\Psi_{d,\rho}(u)}{u\Phi_{c,b}(u)} \geq \Big(- \varepsilon -\frac{2 \mathbf{m}_d}{\sigma_{c,b}^{2}}\Big) \frac{1}{1-u} \qquad \text{and} \qquad  \frac{1}{u\Phi_{c,b}(u)} \geq \Big(- \varepsilon +\frac{2 }{\sigma_{c,b}^{2}}\Big) \frac{1}{(1-u)^{2}}.
\end{align*}
\noindent Then, by \eqref{cactus1}, there exists a constant $0 < C_{\varepsilon, \theta}<\infty$ such that, for $0 < \theta^{\prime} < \theta \leq x < 1$,
\begin{align*} 
\mathcal{R}_{0,\rho}^{c,b}(\theta; x) \geq C_{\varepsilon, \theta} \int_{\theta}^{x} (1-u)^{-2+\varepsilon+ (2 \mathbf{m}_d)/\sigma_{c,b}^{2}}  {\rm d} u.
\end{align*}
\noindent Therefore, we conclude that $\mathcal{R}_{0,\rho}^{c,b}(\theta; 1) = \infty$, for some $\theta \in (0,1)$, whenever \eqref{cactus2} is satisfied, and by Theorem \ref{Theo3}, the associated BPI-process is conservative.
\end{example}

\section{A modified branching process with migration and the Lamperti transform} \label{sec1}

In this section, we introduce a modified branching process with migration that plays an important role in our analysis. We  shall study its behaviour and establish some useful properties. In particular, we prove that BPI-processes can be constructed from the aforementioned family of processes via a Lamperti-type transformation. Let $X = (X_{t}, t \geq 0)$ be a continuous-time Markov chain whose infinitesimal generator matrix $A= (a_{i,j} )_{i,j \in \mathbb{N}_{0}}$ is given by
\begin{eqnarray*}
a_{i,j}  = \left\{ \begin{array}{lcl}
               \pi_{j-i} + (i-1)b_{j-i}  & \mbox{  if } & i \geq 1 \, \, \text{and} \, \, j >i, \\
                \pi_{j-i} & \mbox{  if } & i = 0 \, \, \text{and} \, \, j >i, \\
              d + c (i-1)  & \mbox{  if } & i \geq 1 \, \, \text{and} \, \, j = i-1, \\
              -(d + \rho + (b+c) (i-1) ) & \mbox{  if } & i \geq 1 \, \, \text{and} \, \, j =i,  \\
               - \rho  & \mbox{  if } & i = 0 \, \, \text{and} \, \, j =i,  \\
              0 & \mbox{  if } & \text{otherwise},  \\
              \end{array}
    \right.
\end{eqnarray*}

\noindent where the parameters $d, c, \rho, b, (\pi_{i})_{i \geq 1}$ and $ (b_{i})_{i \geq 1}$ are defined in (\ref{eq7}). We denote by $\mathbf{P}_{z}$ the law of $X$ when issued from $z \in \mathbb{N}_{0}$. The classification of states of the process $X$ is given in Table \ref{Table2} (recall that we always assume that $c >0$ by (\ref{main})). Let $\sigma_\infty$ be the (first) explosion time of the process $X$, i.e., $\sigma_\infty \coloneqq \inf \{t  \geq 0: X_{t} = \infty \}$, with the usual convention that $\inf \varnothing = \infty$, and observe that $X$ does not explode. Indeed, since we have assumed $\pi = \sum_{i \geq 1} \pi_{i} < \infty$, it follows from (\ref{main}) and \cite[Theorem 2.7.1]{Norris1998} that $\mathbf{P}_{z}(\sigma_{\infty}<\infty)=0$, for $z \in \mathbb{N}_{0}$.


\begin{table}[h!] \label{Table2}
\centering
\begin{tabular}{ |c|c|c|c|c|c|} 
 \hline
  $\{0 \}$ & $\{1\}$ & If $\rho >0$ & If $\rho >0$ & If $\rho = 0$ and $b>0$  \\ 
 is absorbing & is absorbing & $X$ is irreducible & $X$ is irreducible   &  $X$ is irreducible \\ 
 if $\rho = 0$  & when   & on $\mathbb{N}_{0}$ when & on $\mathbb{N}$ when &  on $\mathbb{N} \setminus \{1\}$  \\  
  & $\rho=0$  & $d>0$ & $d=0$ & \\
 \hline
\end{tabular}
\caption{Classification of the states of the process $X$.}
\label{Table2}
\end{table}

Informally, the dynamics of the process $X$ are similar to that of a branching process with migration (emigration/immigration). This type of processes has appeared in \cite{Lamb2008} (see pp.\ 97-98) in connection with LB-processes (i.e., BPI-processes with no cooperation). 

We equip the space $\mathbb{N}_{0}$ with the discrete topology. Let $\mathbb{N}_{0,\infty} = \mathbb{N}_{0} \cup \{\infty\}$ be the one-point compactification of $\mathbb{N}_{0}$. Let $B(\mathbb{N}_{0,\infty}, \mathbb{R})$ be the set of bounded functions from $\mathbb{N}_{0,\infty}$ to $\mathbb{R}$. The generator of the Markov process X is the linear operator $(\mathscr{D}(\mathscr{L}^{{\rm mp}}), \mathscr{L}^{{\rm mp}})$, where
\begin{align} \label{eq10}
\mathscr{L}^{{\rm mp}} f(z) & \coloneqq  \sum_{i \geq 1} \pi_{i} \left( f \left(z+ i \right) - f(z) \right) \mathbf{1}_{\{0 \leq z < \infty\}}  +  d \left( f \left(z-1  \right) - f(z) \right) \mathbf{1}_{\{1 \leq z < \infty \}}  \nonumber \\
& \hspace*{10mm} +  c (z-1)  \left( f \left(z-1 \right) - f(z) \right) \mathbf{1}_{\{1 \leq z < \infty \}} + (z-1)\sum_{i \geq 1} b_{i}(f(z+i)-f(z))\mathbf{1}_{\{1 \leq z < \infty\}} \\
& = \sum_{i = -1}^{\infty} a_{z,z+i}(f(z+i) - f(z))\mathbf{1}_{\{0 \leq z < \infty\}}, \nonumber
\end{align}
\noindent for $f \in \mathscr{D}(\mathscr{L}^{{\rm mp}})$ and $ z \in \mathbb{N}_{0, \infty}$ (with the convention $\infty\cdot 0 = 0$) such that
\begin{eqnarray*}
\mathscr{D}(\mathscr{L}^{{\rm mp}}) \coloneqq \left \{ f \in B(\mathbb{N}_{0, \infty}, \mathbb{R}): \sup_{z \in \mathbb{N}_{0}}\sum_{i = -1}^{\infty} |a_{z,z+i}||f(z+i) - f(z)|< \infty \right\};
\end{eqnarray*}
\noindent see for instance Section 4.2 of \cite{Et1986} (or the result in Problem 15, Section 4.11 of \cite{Et1986}).  

We define the probability generating function $G_{z}(u, t)$ of $X$ started at $z \in \mathbb{N}$, as follows
\begin{eqnarray*}
G_{z}(u, t) \coloneqq \mathbf{E}_{z}[u^{ X_{t}}], \qquad \text{for} \qquad u \in [0,1] \quad \text{ and } \quad t \geq 0.
\end{eqnarray*}
Our next goal is to  compute $G_{z}(u, t)$. First, observe that our main assumption (\ref{main}) implies that 
\[ 
\int_{0}^{u} \frac{{\rm d} w}{\Phi_{c,b}(w)} < \infty,\qquad \textrm{for}\quad  u \in [0,1),
\]
 where $\Phi_{c,b}$ is the interaction  mechanism defined in (\ref{eq15}). Thus, we define the mapping $\Upsilon: [0,1) \rightarrow \mathbb{R}_{+}$ by
\begin{eqnarray*}
\Upsilon(u) \coloneqq \int_{0}^{u} \frac{{\rm d} w}{\Phi_{c,b}(w)}, \qquad \text{for} \quad  u \in [0,1),
\end{eqnarray*}

\noindent which is clearly continuous and bijective. Note also that we can continuously extend $\Upsilon$ on $[0,1]$ by taking $\Upsilon(1) \coloneqq  \lim_{u \uparrow 1} \Upsilon(u)= \infty$. We denote such extension by $\Upsilon$ with a slight abuse of notation (observe that now $\Upsilon: [0,1] \rightarrow \mathbb{R}_{+} \cup \{\infty\}$) and we write $\tilde{\Upsilon}$ for its inverse mapping.

For instance,  when $b=0$ (no cooperation), we have that $\Upsilon(u) = - c^{-1} \ln (1-u)$ for  $u \in [0,1]$, and thus, $\tilde{\Upsilon}(w) = 1-e^{-cw}$, for  $w \in\mathbb{R}_{+} \cup \{\infty\}$. Next, we take $c >0$, $b_{1} = b \leq c$ and $b_{i} = 0$ for all $i \geq 2$. In this case,  we have for $u \in [0,1]$ and $w \in \mathbb{R}_{+} \cup \{\infty\}$,  that 
\begin{eqnarray*}
\Upsilon(u)  = \left\{ \begin{array}{lcl}
               \frac{1}{c-b}\ln \left(\frac{c-bu}{c(1-u)} \right)  & \mbox{  if } & b < c, \\
               \frac{1}{c} \frac{u}{1-u} & \mbox{  if } & b = c, \\
              \end{array}
\right.
\, \, \qquad\text{and} \qquad\, \,
\tilde{\Upsilon}(w)  = \left\{ \begin{array}{lcl}
               \frac{ce^{(c-b)w}-c}{ce^{(c-b)w}-b}  & \mbox{  if } & b < c, \\
               \frac{cw}{1+cw} & \mbox{  if } & b = c. \\
              \end{array}
    \right.
\end{eqnarray*}

\noindent As we will show in Section \ref{sec3}, the associated BPI-process of the previous example is in a duality relationship with the so-called Wright-Fisher diffusion with efficiency studied recently in \cite{PaMi2019}. 

\begin{proposition} \label{Pro1}
Suppose that either $d=0$ or $c=d$ and $b=0$. Then 
\begin{eqnarray*} 
G_{z}(u, t)   =  \Big(\tilde{\Upsilon}(t + \Upsilon(u))  \Big)^{z} \exp \left(\int_{u}^{\tilde{\Upsilon}(t + \Upsilon(u)) } \frac{\Psi_{d,\rho}(w) - \Phi_{c,b}(w)}{w\Phi_{c,b}(w)} {\rm d} w \right),  
\end{eqnarray*}
for $ z \in \mathbb{N}$, $ u \in [0,1]$ and $t \geq 0.$
\end{proposition}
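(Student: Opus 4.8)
The plan is to turn the discrete generator $\mathscr{L}^{{\rm mp}}$ into a first-order differential operator acting on $u$, thereby reducing the computation of $G_z(u,t)$ to solving a linear transport equation whose characteristics are governed precisely by the map $\Upsilon$. First I would apply $\mathscr{L}^{{\rm mp}}$ to the test function $z \mapsto u^{z}$, which belongs to $\mathscr{D}(\mathscr{L}^{{\rm mp}})$ for $u \in (0,1]$ by Remark \ref{remarkG} (write $u = e^{-\lambda}$ with $\lambda \geq 0$). Using $\sum_{i\geq 1}\pi_i = \rho$, $\sum_{i\geq 1}b_i = b$ and the definitions \eqref{eq9} and \eqref{eq15} of $\Psi_{d,\rho}$ and $\Phi_{c,b}$, a direct computation gives, for every $z \geq 1$,
\[
\mathscr{L}^{{\rm mp}}(u^{\cdot})(z) = \frac{u^{z}}{u}\Big(\Psi_{d,\rho}(u) + (z-1)\Phi_{c,b}(u)\Big) = \Phi_{c,b}(u)\,\frac{\partial}{\partial u}u^{z} + \frac{\Psi_{d,\rho}(u) - \Phi_{c,b}(u)}{u}\,u^{z},
\]
where the last step uses $z\,u^{z} = u\,\partial_u u^{z}$. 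This identity is the engine of the proof.

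The hypothesis enters because the displayed identity holds only at states $z \geq 1$: at $z = 0$ the generator reduces to the pure immigration term $\sum_{i\geq 1}\pi_i(u^{i}-1)$. When $d = 0$, the process issued from $z \in \mathbb{N}$ never visits $0$ (from state $1$ the death and competition rates both vanish, so the only available jumps are the upward immigration jumps), and hence the value of $\mathscr{L}^{{\rm mp}}(u^{\cdot})$ at $0$ never contributes. When $c = d$ and $b = 0$, one checks that $(\Psi_{d,\rho}(u) - \Phi_{c,b}(u))/u = \sum_{i\geq 1}\pi_i u^{i} - \rho$, so that the displayed identity extends to $z = 0$ as an exact equality. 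In either case the identity is valid along the whole trajectory of $X$.

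Next, by Dynkin's formula applied to $f = u^{\cdot}$, I would write $G_z(u,t) = u^{z} + \int_0^t \mathbf{E}_z[\mathscr{L}^{{\rm mp}}(u^{\cdot})(X_s)]\,{\rm d}s$, substitute the identity, and use the interchange $\mathbf{E}_z[X_s u^{X_s-1}] = \partial_u G_z(u,s)$ (licit for $u \in [0,1)$ since $x \mapsto x u^{x-1}$ is bounded there) to obtain the linear transport equation
\[
\frac{\partial}{\partial t}G_z(u,t) = \Phi_{c,b}(u)\,\frac{\partial}{\partial u}G_z(u,t) + \frac{\Psi_{d,\rho}(u) - \Phi_{c,b}(u)}{u}\,G_z(u,t), \qquad G_z(u,0) = u^{z}.
\]
I would then solve this by characteristics: the curves $s \mapsto u(s)$ solving $u'(s) = -\Phi_{c,b}(u(s))$ satisfy $\tfrac{{\rm d}}{{\rm d}s}\Upsilon(u(s)) = -1$, so $\Upsilon(u(s)) = \Upsilon(u(0)) - s$ and the characteristic reaching $(u,t)$ emanates from $u(0) = \tilde{\Upsilon}(t+\Upsilon(u))$. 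Along it $\tfrac{{\rm d}}{{\rm d}s}\log G = (\Psi_{d,\rho}-\Phi_{c,b})/u$; integrating from $0$ to $t$ and changing variables $s \leftrightarrow w = u(s)$ via ${\rm d}s = -{\rm d}w/\Phi_{c,b}(w)$ yields
\[
\log G_z(u,t) = z\log \tilde{\Upsilon}(t+\Upsilon(u)) + \int_u^{\tilde{\Upsilon}(t+\Upsilon(u))} \frac{\Psi_{d,\rho}(w) - \Phi_{c,b}(w)}{w\,\Phi_{c,b}(w)}\,{\rm d}w,
\]
which is the asserted formula; the boundary values $u \in \{0,1\}$ then follow by continuity.

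I expect the main obstacle to be the rigorous justification of the two analytic interchanges—differentiating the Dynkin integral equation in $t$ to recover the PDE, and differentiating under the expectation in $u$—together with checking that these remain valid up to but excluding $u = 1$, where both $\Upsilon$ and the exponent diverge. The genuinely conceptual point is the identification of assumption ``$d=0$ or ($c=d$, $b=0$)'' as exactly the condition under which the transport identity may be propagated through (or past) the state $0$; the algebra behind it is short, but it is what makes the clean characteristic computation legitimate.
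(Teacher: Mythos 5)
Your proposal is correct and follows essentially the same route as the paper: apply the generator to $u^{z}$, use Dynkin's formula to derive the first-order transport equation for $G_{z}(u,t)$ (with the hypothesis ``$d=0$ or $c=d$ and $b=0$'' serving exactly to kill the boundary contribution at the state $0$, since $\Phi_{c,0}(u)-d(1-u)=0$ in the one case and $\mathbb{P}_{z}(X_{t}=0)=0$ in the other), and solve by characteristics via $\Upsilon$. The only cosmetic difference is that you integrate along the characteristics explicitly, whereas the paper simply verifies that the stated formula satisfies the equation.
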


\begin{proof}
For $u \in [0,1]$ and $z \in \mathbb{N}$, define the function $H(z,u) \coloneqq u^{z}$ and  note that $H(\cdot, u) \in  \mathscr{D}(\mathscr{L}^{{\rm mp}})$. Hence
\begin{eqnarray*}
\mathscr{L}^{{\rm mp}} H(\cdot, u)(z) = \Big( \Psi_{d,\rho}(u) + \Phi_{c,b}(u)(z-1) \Big) u^{z-1}\mathbf{1}_{\{z  \geq 0 \}} +\frac{1}{u} \Big( \Phi_{c,b}(u)-d(1-u)\Big)\mathbf{1}_{\{z  = 0 \}}.
\end{eqnarray*}

\noindent Furthermore, Dynkin's theorem (Proposition 4.1.7 in \cite{Et1986}) implies that the process 
\[
\left( H(X_{t},u) -  H(z,u) - \int_{0}^{t}  \mathscr{L}^{{\rm mp}} H(\cdot, u)(X_{s})  {\rm d}s, t \geq 0 \right),
\] is a martingale. Then, 
\begin{align*}
 G_{z}(u, t) - G_{z}(u, 0) & - \int_{0}^{t} \mathbb{E}_{z} \left[ \Big( \Psi_{d,\rho}(u)  + \Phi_{c,b}(u)  (X_{s}-1) \Big)u^{X_{s}-1} \right]{\rm d}s \\
& \hspace*{20mm} + \frac{1}{u}\Big(\Phi_{c,b}(u)-d(1-u)\Big) \int_{0}^{t} \mathbf{P}_{z}(X_{s}  = 0)  {\rm d}s = 0.
\end{align*}

\noindent On the one hand, $c=d$ and $b=0$ imply that 
$\Phi_{c,b}(u)-d(1-u) =0$. On the other hand, $d=0$ implies that $\mathbf{P}_{z}(X_{t}  = 0)=0$, for $t \geq 0$. Thus, we deduce that $G_{z}(u, t)$ satisfies the following equation
\begin{eqnarray*}
- \Phi_{c,b}(u) \frac{\partial}{\partial u} G_{z}(u, t)  + 
\frac{\partial}{\partial t} G_{z}(u, t)  = \left( \Psi_{d,\rho}(u) - \Phi_{c,b}(u)\right)u^{-1} G_{z}(u, t), 
\end{eqnarray*}

\noindent for $t \geq 0$, $z \in \mathbb{N}$ and $u \in (0,1]$, with initial condition $G_{z}(u,0) = u^{z}$. Then, note that the function defined in Proposition \ref{Pro1} satisfies the above equation. Indeed, one could also use the method of characteristics to solve the above equation and note that the resulting function can be continuously extended on $[0,1]$. 
\end{proof}

In the supercritical cooperative regime (i.e., when $\varsigma>0$), the integral $ \int_{0}^{u} \frac{{\rm d} w}{\Phi_{c,b}(w)}$, for $u \in [0,1)$, is not always finite. This is because there may exist $u^{\ast} \in (0,1)$ such that $\Phi_{c,b}(u^{\ast}) = 0$, which prevents us from defining the function $\Upsilon$ on the whole interval $[0,1]$. Therefore, it seems complicated to deduce a similar result as in Proposition \ref{Pro1} in the supercritical cooperative regime.




We explicitly compute the Laplace transform of the hitting time $\sigma_{a} := \inf \{t  \geq 0:  X_{t} \leq a\}$, for $a \in \mathbb{N}_{0}$, in the subcritical cooperative regime. This result will be of special interest in the forthcoming analysis.

\begin{proposition} \label{lemma1}
Assume that $d > 0$ and $\varsigma <0$. For any  $z \geq a\ge 0$ and $\mu > 0$,
\begin{align*}
\mathbf{E}_{z}[e^{-\mu \sigma_{a}}] = \frac{f_{\mu}(z)}{f_{\mu}(a)},
\end{align*}
\noindent where $f_{\mu}(y) \coloneqq \int_{0}^{\infty}e^{-yx}g_{\mu}(x){\rm d} x$, for $y \in \mathbb{N}_{0,\infty}$, and 
\begin{align*}
g_{\mu}(x) \coloneqq \frac{1}{\Phi_{c,b}(e^{-x})} \exp\left( \int_{\theta}^{e^{-x}} \frac{\Psi_{d, \rho}(w)-\mu w}{w \Phi_{c,b}(w)} {\rm d}w \right), \hspace*{5mm} x \geq 0 \hspace*{3mm} \text{and} \hspace*{3mm} \theta \in (0,1).
\end{align*}
\end{proposition}

Before we prove the above result, we require the following lemma that will also play an important role in the proofs of Theorems \ref{Theo1} and \ref{Theo3}. Its proof is deferred to the Appendix to simplify the reading.

\begin{lemma} \label{lemma12}
Suppose that $\varsigma < 0$. Then, for $\theta \in (0,1)$, we have that $\mathcal{Q}_{d, \rho}^{c,b}(\theta;1) \in [-\infty, \infty)$.
\end{lemma}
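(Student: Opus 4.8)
The plan is to reduce the integrand to a form in which the boundary factor $(1-w)$ at the upper endpoint cancels, and then to control the sign of what remains. Using the factorisations in \eqref{cebolla}, I write $\Psi_{d,\rho}(w) = (1-w)h(w)$ and $\Phi_{c,b}(w) = (1-w)g(w)$ for $w\in[0,1)$, where $h(w) := d - \sum_{i\geq1}\overline{\pi}_i w^i$ and $g(w) := c - \sum_{i\geq1}\overline{b}_i w^i$ with $\overline{\pi}_i,\overline{b}_i$ as in \eqref{cebolla0}. Since $1-w>0$ on $(\theta,1)$, the two factors cancel, so that
$\Psi_{d,\rho}(w)/(w\Phi_{c,b}(w)) = h(w)/(w g(w))$ and hence $\mathcal{Q}_{d,\rho}^{c,b}(\theta;x) = \int_\theta^x h(w)/(w g(w))\,{\rm d}w$.

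The role of the hypothesis $\varsigma<0$ is to keep the denominator bounded away from $0$: by \eqref{eq02}, $g(w) = \Phi_{c,b}(w)/(1-w) \geq -\varsigma > 0$ for all $w\in[0,1)$, so $w g(w)\geq \theta(-\varsigma) > 0$ on $[\theta,1)$. Meanwhile $h$ is continuous and non-increasing on $[0,1)$, since the subtracted power series has nonnegative coefficients, and therefore $h(w)\leq h(\theta)$ there. First I would combine these two facts to bound the integrand from above by a finite constant: setting $M := \max(h(\theta),0)/(\theta(-\varsigma))$, one checks $h(w)/(w g(w))\leq M$ for every $w\in[\theta,1)$ (when $h(w)\geq 0$ divide the inequalities $h(w)\leq h(\theta)$ and $w g(w)\geq\theta(-\varsigma)$; when $h(w)<0$ the ratio is negative, hence $\leq M$).

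To conclude, I would subtract off this bound: the map $w\mapsto M - h(w)/(w g(w))$ is nonnegative on $[\theta,1)$, so $x\mapsto\int_\theta^x\big(M - h(w)/(w g(w))\big)\,{\rm d}w$ is non-decreasing and therefore converges in $[0,\infty]$ as $x\uparrow 1$. Writing $\mathcal{Q}_{d,\rho}^{c,b}(\theta;x) = M(x-\theta) - \int_\theta^x\big(M - h(w)/(w g(w))\big)\,{\rm d}w$ then shows that the limit $\mathcal{Q}_{d,\rho}^{c,b}(\theta;1)$ exists and lies in $[-\infty, M(1-\theta)]\subset[-\infty,\infty)$, which is the assertion.

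The only genuinely delicate point is the behaviour near $w=1$ when the offspring mean is infinite, i.e.\ $\sum_{i\geq1} i\pi_i = \sum_{i\geq1}\overline{\pi}_i = \infty$; then $h(w)\downarrow -\infty$ and the integral may indeed diverge, but only to $-\infty$, never to $+\infty$, which is exactly the content of the lemma. The subtract-a-constant step is what lets me treat the finite- and infinite-mean cases uniformly and, crucially, guarantees that the limit genuinely exists rather than merely that its $\limsup$ is finite; the finite upper bound $M$ furnished by $\varsigma<0$ is what excludes the value $+\infty$.
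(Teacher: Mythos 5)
Your proof is correct and follows essentially the same route as the paper's: both exploit the lower bound $\Phi_{c,b}(w)\ge -\varsigma(1-w)$ from \eqref{eq02} together with an upper bound on $\Psi_{d,\rho}$ (the paper uses $\Psi_{d,\rho}(w)\le d(1-w)$, which in your notation is $h(w)\le h(0)=d$, where you use $h(w)\le h(\theta)$) to bound the integrand above by a finite constant on $[\theta,1)$. Your explicit subtract-a-constant and monotone-convergence step justifying that the limit genuinely exists in $[-\infty,\infty)$ is a detail the paper leaves implicit, but the underlying argument is the same.
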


\begin{proof}[Proof of Proposition \ref{lemma1}]
We first check that $ \int_{0}^{\infty} g_{\mu}(x) {\rm d}x < \infty$. Recall from \eqref{eq02}, that 
\[
-\varsigma(1-w) \leq \Phi_{c,b}(w) \leq c(1-w),\qquad \textrm{for }\quad w \in [0,1], 
\]
provided that $\varsigma < 0$.  In other words, there is a constant $C >0$ such that  
\begin{eqnarray} \label{eq5}
\exp\left(- \int_{\theta}^{u} \frac{\mu}{\Phi_{c,b}(w)} {\rm d}w \right) \leq  \exp\left( -\int_{\theta}^{u} \frac{\mu}{c(1-w)} {\rm d}w \right) \leq  C (1-u)^{\frac{\mu}{c}},
\end{eqnarray}

\noindent for $u \in [\theta, 1)$. Similarly, one can check that there is another constant $C_1 >0$ satisfying
\begin{eqnarray} \label{eq6}
\exp\left(- \int_{\theta}^{u} \frac{\mu}{\Phi_{c,b}(w)} {\rm d}w \right)  \leq  \exp\left( -\int_{\theta}^{u} \frac{\mu}{-\varsigma(1-w)} {\rm d}w \right) \leq  C_1 (1-u)^{-\frac{\mu}{\varsigma}}, 
\end{eqnarray}

\noindent for $u \in (0, \theta]$. Since $\varsigma <0$, Lemma \ref{lemma12} implies that either 
$\mathcal{Q}_{d, \rho}^{c,b}(\theta;1) =  - \infty$ or $\mathcal{Q}_{d, \rho}^{c,b}(\theta;1) \in (-\infty, \infty)$. In both cases, we can find $\theta^{\prime} \in [\theta, 1)$ and a constant $C_2 > 0$ such that
\begin{eqnarray*} 
\int_{0}^{- \ln \theta^{\prime}} g_{\mu}(x) {\rm d}x \leq C_2 \int_{0}^{- \ln \theta^{\prime}} \frac{1}{\Phi_{c,b}(e^{-x})} \exp\left( -\int_{\theta}^{e^{-x}} \frac{\mu}{\Phi_{c,b}(w)} {\rm d}w \right)  {\rm d}x < \infty. 
\end{eqnarray*}
The latter can be obtained by using the estimates in (\ref{eq02}) and (\ref{eq5}) taking into account that $\mu > 0$.

Since $d >0$, there exists $\theta^{\prime \prime} \in (0, \theta]$ such that for all $w  \in [0, \theta^{\prime \prime}]$, $\Psi_{d,\rho}(w) \geq \Psi_{d, \rho}(\theta^{\prime \prime}) > 0$. Then, for some constant $C_3 > 0$ (whose value may change from line to line),
\begin{align*}
\int_{- \ln \theta^{\prime \prime}}^{\infty} g_{\mu}(x) {\rm d}x & \leq  C_3 \int_{- \ln \theta^{\prime \prime}}^{\infty} (1-e^{-x})^{-1} \exp\left(- \int_{e^{-x}}^{\theta^{\prime \prime}} \frac{\Psi_{d, \rho}(w)-\mu w}{w\Phi_{c,b}(w)} {\rm d}w \right) {\rm d}x
\nonumber \\
& \leq  C_3 \int_{- \ln \theta^{\prime \prime}}^{\infty} (1-e^{-x})^{-\frac{\mu}{\varsigma}-1} \exp\left( - \int_{e^{-x}}^{\theta^{\prime \prime}} \frac{\Psi_{d,\rho}(\theta^{\prime \prime})}{cw(1-w)} {\rm d}w \right) {\rm d}x \nonumber \\
& \leq   C_3\int_{- \ln \theta^{\prime \prime}}^{\infty} (1-e^{-x})^{-\frac{\mu}{\varsigma} - \frac{\Psi_{d,\rho}(\theta^{\prime \prime})}{c}-1} e^{-\frac{\Psi_{d,\rho}(\theta^{\prime \prime})}{c}x}  {\rm d}x < \infty,
\end{align*}

\noindent where we used (\ref{eq02}) to obtain the first inequality and (\ref{eq6}) to obtain the second one. The above computations imply our claim, i.e.\ $ \int_{0}^{\infty} g_{\mu}(x) {\rm d}x < \infty$.

Now we set $e_{x}(z) \coloneqq e^{-zx}$   for $z \in  \mathbb{N}_{0,\infty}$ and $x \geq 0$. It is not difficult to see that $e_{x}(\cdot), f_{\mu}(\cdot) \in \mathscr{D}(\mathscr{L}^{{\rm mp}})$. Observe from (\ref{eq10}) that 
\begin{align*}
\mathscr{L}^{{\rm mp}} e_{x}(z) = ( \Psi_{d, \rho}(e^{-x}) + \Phi_{c,b}(e^{-x}) (z-1) ) e^{-x(z-1)},\quad \textrm{for $z \in \mathbb{N}$}.
\end{align*}
We now verify that $f_{\mu}$ is an eigenfunction of the generator $\mathscr{L}^{{\rm mp}}$. For $z \in \mathbb{N}$, 
\begin{align*}
\mathscr{L}^{{\rm mp}} f_{\mu}(z) - \mu f_{\mu}(z) & =  \int_{0}^{\infty} g_{\mu}(x) \left( \mathscr{L}^{{\rm mp}} e_{x}(z) - \mu e_{x}(z) \right) {\rm d}x \\
& =  \int_{0}^{\infty} \Big( e^{x} \Psi_{d,\rho}(e^{-x}) - \mu - e^{x}\Phi_{c,b}(e^{-x}) \Big) g_{\mu}(x)e^{-zx}  {\rm d}x  + \int_{0}^{\infty} e^{x}\Phi_{c,b}(e^{-x}) z e^{-x z} g_{\mu}(x)  {\rm d}x \\
& =  \int_{0}^{\infty} \Big( e^{x} \Psi_{d,\rho}(e^{-x}) - \mu - e^{x}\Phi_{c,b}(e^{-x}) \Big) g_{\mu}(x)e^{-zx}  {\rm d}x - e^{-x(z-1)}\Phi_{c,b}(e^{-x}) g_{\mu}(x) \big |_{x=0}^{x=\infty}\\
& \hspace*{5mm} + \int_{0}^{\infty} \Big( e^{x} \Phi_{c,b}(e^{-x})g_{\mu}^{\prime}(x) +(e^{x} \Phi_{c,b}(e^{-x})- \Phi_{c,b}^{\prime}(e^{-x})) g_{\mu}(x) \Big) e^{-zx}  {\rm d}x \\
& =  \int_{0}^{\infty} \Big(\left( e^{x} \Psi_{d, \rho}(e^{-x}) - \mu - \Phi^{\prime}_{c,b}(e^{-x}) \right) g_{\mu}(x) + e^{x}\Phi_{c,b}(e^{-x}) g_{\mu}^{\prime}(x) \Big) e^{-zx}  {\rm d}x = 0,
\end{align*} 
where in the third identity we have used integration by parts and in the last identity we have used that $g_{\mu}$ solves the following differential equation
\begin{align*}
\left( e^{x} \Psi_{d,\rho}(e^{-x}) - \mu - \Phi^{\prime}_{c,b}(e^{-x}) \right) g_{\mu}(x) + e^{x} \Phi_{c,b}(e^{-x})g_{\mu}^{\prime}(x) = 0, \hspace*{5mm} \text{for all} \hspace*{4mm} x \geq 0.
\end{align*}
\noindent An application of the integration by parts  formula for right-continuous local martingales implies that the process $(e^{-\mu t} f_{\mu}(X_{t}), t\geq 0)$ is a local martingale, see for e.g., \cite[Proposition 2.3.2]{Et1986}. Since the function $f_{\mu}$ is decreasing, one has for any $t \leq \sigma_{a}$, $X_{t} \geq a$ and $f_{\mu}(X_{t}) \leq f_{\mu}(a)< \infty,$ $\mathbf{P}_{z}$-a.s., for all $z \geq a$. Therefore, $(e^{-\mu (t \wedge \sigma_{a})} f_{\mu}(X_{t \wedge \sigma_{a}}), t\geq 0)$ is a bounded martingale and our claim follows by the optional stopping theorem.
\end{proof}

Next, we provide a criterion for the process $X$ to be recurrent or transient whenever $c=d$ and $b=0$,  $d=0$ or $d>0$ and $\rho=0$. In general, we could analyse the  classification of states for $X$ via a coupling argument as is illustrated in the proof of Proposition \ref{Pro2} (iv) below. However, we decided not to provide a general state classification to keep the document at a reasonable size and since the cases here considered are enough for our purposes. 

Recall that for $z \in \mathbb{N}_{0}$, the state $z$ is recurrent if $\mathbf{P}_{z} \left( \{  t \geq 0: X_{t} = z \} \, \, \text{is unbounded} \right) = 1$, and that $z$ is transient if $\mathbf{P}_{z} \left( \{  t \geq 0: X_{t} = z \} \, \, \text{is unbounded} \right) = 0$; see \cite[Section 3.4, page 114]{Norris1998}. We also recall  that recurrence and transience are class properties (see for e.g., \cite[Theorem 3.4.1]{Norris1998}), i.e., if a state $ z_{1} $ is recurrent (or transient) and communicates with another state, say $ z_{2} $, then $ z_{2} $ is also recurrent (or transient). 

\begin{proposition} \label{Pro2}
We have the following classification of states for $X$.
\begin{itemize}
\item[(i)] If $d=0$ and $\rho >0$, then the process $X$ is recurrent or transient on $\mathbb{N}$ according to whether $\mathcal{R}^{c,b}_{0, \rho}(\theta;1) = \infty$ or  $\mathcal{R}^{c,b}_{0, \rho}(\theta;1)  <\infty$, for some $\theta \in (0,1)$. 

\item[(ii)] If $d=c$, $\rho >0$ and $b=0$, then the process $X$ is recurrent or transient on $\mathbb{N}_{0}$ according to whether $\mathcal{R}^{c,0}_{c, \rho}(\theta;1) = \infty$ or  $\mathcal{R}^{c,0}_{c, \rho}(\theta;1)  <\infty$, for some $\theta \in (0,1)$. 

\item[(iii)] If $d= \rho=0$, then the process $X$ is transient on $\mathbb{N} \setminus \{1\}$.

\item[(iv)] If $c \geq d >0$ and $\rho =0$, then the process $X$ is transient on $\mathbb{N}$. 
\end{itemize}
\end{proposition}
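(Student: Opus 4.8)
The plan is to reduce all four cases to estimating the probability that $X$, started from a level $z$, ever reaches a lower level $a$. Since recurrence and transience are class properties (\cite[Theorem 3.4.1]{Norris1998}) and $X$ is conservative (as established just after Proposition \ref{Pro1}) with downward jumps of size one only, it is skip-free downwards; hence on its irreducibility class the chain is recurrent exactly when $\mathbf{P}_{z}(\sigma_{a} < \infty) = 1$ for one (and then every) pair $a < z$ of the class, and transient otherwise, where $\sigma_{a} = \inf\{t \geq 0 : X_{t} \leq a\}$ and $X_{\sigma_{a}} = a$ on $\{\sigma_{a} < \infty\}$ by skip-free descent. In cases (i) and (ii) I would take $a=1$ and $a=0$ respectively.

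For the two cases in which an integral test is asserted, namely (i) $d = 0$, $\rho > 0$ and (ii) $d = c$, $\rho > 0$, $b = 0$, I would use the harmonic function obtained as the $\mu \downarrow 0$ specialisation of the eigenfunction constructed in the proof of Proposition \ref{lemma1}. After the substitution $u = e^{-y}$ this function reads
\[
f(z) \coloneqq \int_{0}^{1} u^{z}\, \frac{\mathcal{J}_{d,\rho}^{c,b}(\theta;u)}{u\, \Phi_{c,b}(u)}\, \mathrm{d}u, \qquad z \in \mathbb{N}_{0}.
\]
Repeating the integration-by-parts computation of Proposition \ref{lemma1} with $\mu = 0$ shows that $\mathscr{L}^{\mathrm{mp}} f(z) = 0$ for every $z$ strictly above the chosen reference level $a$ (the only boundary term involves $\Phi_{c,b}(1) = 0$ together with a contribution that vanishes at $u = 0$ for these $z$), so that $(f(X_{t \wedge \sigma_{a}}))_{t \geq 0}$ is a local martingale; moreover $f$ is strictly decreasing in $z$. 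The estimates \eqref{eq35} guarantee that the integrand is integrable near $u = 0$ in both cases, so $f(z)$ is finite for every admissible $z$ if and only if the integrand is integrable near $u = 1$, that is, if and only if $\mathcal{R}_{d,\rho}^{c,b}(\theta;1) < \infty$; and in that case $f(z) \to 0$ as $z \to \infty$.

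When $\mathcal{R}_{d,\rho}^{c,b}(\theta;1) < \infty$ the function $f$ is bounded, so $(f(X_{t \wedge \sigma_{a}}))_{t \geq 0}$ is a bounded martingale. Optional stopping, together with $X_{\sigma_{a}} = a$ on $\{\sigma_{a} < \infty\}$ and $X_{t} \to \infty$ on $\{\sigma_{a} = \infty\}$ (the latter because $X$ is conservative and skip-free downwards, so it cannot remain bounded without hitting $a$), gives $f(z) = f(a)\, \mathbf{P}_{z}(\sigma_{a} < \infty)$, whence $\mathbf{P}_{z}(\sigma_{a} < \infty) = f(z)/f(a) < 1$ and $X$ is transient. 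When $\mathcal{R}_{d,\rho}^{c,b}(\theta;1) = \infty$ we have $f \equiv \infty$ and this identity degenerates; here I would instead apply the same optional-stopping argument to the bounded harmonic functions obtained by truncating the integral defining $f$ near $u = 1$, equivalently by stopping at $\sigma_{a} \wedge \tau_{N}$ where $\tau_{N}$ is the first time $X$ exceeds $N$, and let $N \to \infty$: the divergence of $\mathcal{R}_{d,\rho}^{c,b}(\theta;1)$ forces $\mathbf{P}_{z}(\sigma_{a} = \infty)$ to vanish, yielding recurrence. I expect the main obstacle to be precisely this recurrent direction: since $X$ is not skip-free upwards, the overshoot $X_{\tau_{N}}$ may greatly exceed $N$, and controlling its contribution to the optional-stopping identity, using the monotonicity of the truncated functions and the conservativeness $\tau_{N} \to \infty$, is the delicate point.

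Cases (iii) and (iv) need no integral test and follow from a reachable absorbing state. For (iii), with $d = \rho = 0$ the state $\{1\}$ is absorbing, and from any $z \geq 2$ the event that the first $z - 1$ jumps are successive deaths, each occurring with positive rate $c(k-1) > 0$ from level $k$, has positive probability and leads to absorption at $\{1\}$ before any return to $z$; thus the return probability to $z$ is strictly less than $1$ and $X$ is transient on $\mathbb{N} \setminus \{1\}$. For (iv), with $c \geq d > 0$ and $\rho = 0$ the state $\{0\}$ is absorbing, but Proposition \ref{Pro1} is unavailable unless $c = d$; I would therefore show that absorption at $\{0\}$ occurs with positive probability from every $z \in \mathbb{N}$ by a coupling argument (see \cite[Chapters IV and V]{Lindvall1992}), comparing $X$ with a dominating chain whose downward descent to $0$ is explicit, for instance the tractable case $c = d$. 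Positive absorption probability again yields return probability strictly below $1$, hence transience.
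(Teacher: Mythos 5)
Your reduction to hitting probabilities of a lower level via skip-free descent is sound, and your parts (iii) and (iv) are correct (indeed, for (iv) you do not even need a coupling: with $\rho=0$ the state $1$ jumps only to $0$, so a finite chain of deaths reaches the absorbing state $0$ with positive probability from any $z$). The transience direction of (i) and (ii) also goes through, but for a reason other than the one you give: after integrating by parts, $\mathscr{L}^{{\rm mp}}f(z)$ equals the boundary term $\big[u^{z-1}\mathcal{J}_{d,\rho}^{c,b}(\theta;u)\big]_{u=0}^{u=1}$, and the factor $\Phi_{c,b}(1)=0$ that you invoke cancels against the $1/\Phi_{c,b}(u)$ inside $f$; the term at $u=1$ is $\mathcal{J}_{d,\rho}^{c,b}(\theta;1)=e^{\mathcal{Q}_{d,\rho}^{c,b}(\theta;1)}$, which vanishes only because $\mathcal{R}_{d,\rho}^{c,b}(\theta;1)<\infty$ forces $\mathcal{Q}_{d,\rho}^{c,b}(\theta;1)=-\infty$ (since $1/(u\Phi_{c,b}(u))\geq 1/(c(1-u))$ is not integrable at $1$). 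So $f$ is harmonic above the base level precisely in the transient regime, and there your optional-stopping identity is valid.

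The genuine gap is the recurrence direction. When $\mathcal{R}_{d,\rho}^{c,b}(\theta;1)=\infty$ the truncated functions $f_{\varepsilon}(z)=\int_{0}^{1-\varepsilon}u^{z-1}\mathcal{J}_{d,\rho}^{c,b}(\theta;u)\Phi_{c,b}(u)^{-1}{\rm d}u$ are \emph{not} harmonic: the same integration by parts gives $\mathscr{L}^{{\rm mp}}f_{\varepsilon}(z)=(1-\varepsilon)^{z-1}\mathcal{J}_{d,\rho}^{c,b}(\theta;1-\varepsilon)>0$, so $f_{\varepsilon}(X_{\cdot})$ is a submartingale and the ``optional-stopping identity'' you want does not hold; moreover $f_{\varepsilon}$ is bounded and decreasing, whereas a Foster--Lyapunov argument for recurrence needs a \emph{super}harmonic function increasing to infinity (e.g.\ $z\mapsto\int_{0}^{1-\varepsilon}(u^{a-1}-u^{z-1})\mathcal{J}_{d,\rho}^{c,b}(\theta;u)\Phi_{c,b}(u)^{-1}{\rm d}u$), followed by a non-trivial interchange of the limits $N\to\infty$ and $\varepsilon\to 0$ in which one must compare the rates at which the numerator and denominator diverge. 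The overshoot of $\tau_{N}$ that you flag is actually harmless by monotonicity; the real work is this Lyapunov construction and double limit, which your sketch does not supply. The paper sidesteps all of this: using the explicit generating function of Proposition \ref{Pro1} (available exactly in the regimes $d=0$ and $d=c,\ b=0$ of parts (i)--(ii)), it computes $\int_{0}^{\infty}\mathbf{P}_{z}(X_{t}=z)\,{\rm d}t$ in closed form, changes variables to identify it with $\mathcal{R}_{d,\rho}^{c,b}(\theta;1)$ up to integrable corrections, and invokes the Green's-function criterion of \cite[Theorem 3.4.2]{Norris1998}, which settles both directions simultaneously.
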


\begin{proof}   
We start with the proof of (i). Let us assume that $d=0$ and $\rho >0$. Proposition \ref{Pro1} implies that
\begin{eqnarray*} 
G_{1}(u, t) = u \exp \left( \int_{u}^{\tilde{\Upsilon}(t + \Upsilon(u)) } \frac{\Psi_{0,\rho}(w)}{w \Phi_{c,b}(w)} {\rm d} w \right), \hspace*{3mm} \text{for} \hspace*{2mm}  t \geq 0 \hspace*{2mm}  \text{and} \hspace*{2mm}  u \in [0,1]. 
\end{eqnarray*}

\noindent We deduce from the folklore of  probability generating functions that
\begin{eqnarray*}
\mathbf{P}_{1}\left( X_{t} = 1 \right) = \lim_{u \downarrow 0} \frac{\partial G_{1}(u,t)}{\partial u} =  \exp \left( \int_{0}^{\tilde{\Upsilon}(t) } \frac{\Psi_{0,\rho}(w)}{w \Phi_{c,b}(w)} {\rm d} w \right). 
\end{eqnarray*}

\noindent Hence 
\begin{eqnarray} \label{eq20}
\int_{0}^{\infty} \mathbf{P}_{1}\left( X_{t} = 1 \right) {\rm d}t  & = & \int_{0}^{\infty} \exp \left( \int_{0}^{\tilde{\Upsilon}(t) } \frac{\Psi_{0, \rho}(w)}{w \Phi_{c,b}(w)} {\rm d} w \right) {\rm d}t \nonumber \\
 & = &\int_{0}^{1} \frac{1}{\Phi_{c,b}(x)} \exp \left( \int_{0}^{x} \frac{\Psi_{0, \rho}(w)}{w \Phi_{c,b}(w)} {\rm d} w \right) {\rm d}x.
\end{eqnarray}

\noindent By \cite[Theorem 3.4.2]{Norris1998}, we known that the state $1$ is recurrent or transient whenever the previous integral is infinite or finite. Moreover that  the state $1$ is recurrent or transient will imply that $X$ is recurrent or transient in $\mathbb{N}$ because $X$ is irreducible on $\mathbb{N}$ when $\rho > 0$ and $c >0$; see for e.g., \cite[Theorem 3.4.1]{Norris1998}. Since the  integrand of  (\ref{eq20}) is integrable for values of $x$ close to $0$, (i) follows from our integral conditions. 

Next, we prove (ii). Suppose that  $d =c$, $\rho >0$ and $b=0$. Proposition \ref{Pro1} implies that
\begin{eqnarray*} 
G_{0}(u, t) = \frac{u}{\tilde{\Upsilon}(t + \Upsilon(u)) } \exp \left( \int_{u}^{\tilde{\Upsilon}(t + \Upsilon(u)) } \frac{\Psi_{c,\rho}(w)}{w \Phi_{c,0}(w)} {\rm d} w \right) =\exp \left( \int_{u}^{\tilde{\Upsilon}(t + \Upsilon(u)) } \frac{\Psi_{0,\rho}(w)}{w \Phi_{c,0}(w)} {\rm d} w \right),
\end{eqnarray*}

\noindent for $t \geq 0$ and $u \in [0,1]$. Then,
\begin{eqnarray*}
\mathbf{P}_{0}\left( X_{t} = 0 \right) = \lim_{u \downarrow 0} G_{0}(u,t) =  \exp \left( \int_{0}^{\tilde{\Upsilon}(t) } \frac{\Psi_{0,\rho}(w)}{w \Phi_{c,0}(w)} {\rm d} w \right). 
\end{eqnarray*}

\noindent Hence 
\begin{eqnarray} \label{eq22}
\int_{0}^{\infty} \mathbf{P}_{0}\left( X_{t} = 0 \right) {\rm d}t  & = & \int_{0}^{\infty} \exp \left( \int_{0}^{\tilde{\Upsilon}(t) } \frac{\Psi_{0, \rho}(w)}{w \Phi_{c,0}(w)} {\rm d} w \right) {\rm d}t \nonumber \\
 & = &\int_{0}^{1} \frac{1}{\Phi_{c,0}(x)} \exp \left( \int_{0}^{x} \frac{\Psi_{0, \rho}(w)}{w \Phi_{c,0}(w)} {\rm d} w \right) {\rm d}x.
\end{eqnarray}

\noindent By \cite[Theorem 3.4.2]{Norris1998} again, we known that the state $0$ is recurrent or transient whenever the previous integral is infinite or finite. This would imply that $X$ is recurrent in $\mathbb{N}_{0}$ because $X$ is irreducible on $\mathbb{N}_{0}$ when $d = c >0$ and $\rho >0$. Since the integrand of (\ref{eq22}) is integrable for values of $x$ close to $0$, it is enough to show that 
\begin{eqnarray*} 
\int_{\theta}^{1} \frac{1}{\Phi_{c,0}(x)} \exp \left( \int_{\theta}^{x} \frac{\Psi_{0, \rho}(w)}{w \Phi_{c,0}(w)} {\rm d} w \right) {\rm d}x & = & \int_{\theta}^{1} \frac{1}{\Phi_{c,0}(x)} \exp \left( \int_{\theta}^{x} \frac{\Psi_{c, \rho}(w) - \Psi_{c, 0}(w)}{w \Phi_{c,0}(w)} {\rm d} w \right) {\rm d}x \\
& = & \int_{\theta}^{1} \frac{\theta}{ x\Phi_{c,0}(x)} \exp \left( \int_{\theta}^{x} \frac{\Psi_{c, \rho}(w)}{w \Phi_{c,0}(w)} {\rm d} w \right) {\rm d}x
\end{eqnarray*}

\noindent is infinite or finite whenever $\mathcal{R}^{c,0}_{c, \rho}(\theta;1) = \infty$ or $\mathcal{R}^{c,0}_{c, \rho}(\theta;1) < \infty$, for some $\theta \in (0,1)$. This concludes the proof of (ii). 

Now, we continue with the proof of (iii). Let us  assume that  $d = \rho = 0$. In this case, $X$ is irreducible on $\mathbb{N} \setminus \{1\}$. Proposition \ref{Pro1} implies that $G_{2}(u,t) = u \tilde{\Upsilon}(t + \Upsilon(u))$, for $t \geq 0$ and $u \in [0,1]$. We deduce from properties of the probability generating functions that 
\[
\mathbf{P}_{2}\left( X_{t} = 2 \right) = \frac{1}{2} \lim_{u \downarrow 0} \frac{\partial^{2}G_{2}(u,t)}{\partial u^{2}} =  c^{-1} \Phi_{c,b}(\tilde{\Upsilon}(t)).
\] Hence it is not difficult to see that $ \int_{0}^{\infty} \mathbf{P}_{2}\left( X_{t} = 2 \right) {\rm d}t < \infty$ by our main assumption (\ref{main}). Therefore, $X$ is transient on $\mathbb{N} \setminus \{1\}$ by \cite[Theorem 3.4.1 and Theorem 3.4.2]{Norris1998}. This completes the proof of (iii). 

Finally, we prove (iv). Suppose that $d = c$ and $\rho = 0$. Observe,  in the particular case when $d > 0$ and $\rho = b= 0$, that the process $X$ only jumps downwards and it should be plain that $X$ is transient on $\mathbb{N}$ since $d=c >0$. We now consider the general case $c \geq d > 0$ and $\rho = 0$. Let $X^{0,0}=(X^{0, 0}_{t}, t \geq 0)$ be a modified branching  process with migration with mechanisms $\Psi_{0,0}$ and $\Phi_{c,b}$. We also let $X^{c, 0}=(X^{c,0}_{t}, t \geq 0)$ be a modified branching  process with migration with mechanisms $\Psi_{c,0}$ and $\Phi_{c,0}$. A classical comparison argument for continuous-time Markov processes on $\mathbb{N}_{0, \infty}$ (see for instance \cite[Chapters IV and V]{Lindvall1992}) shows that $X$ can be coupled with $X^{0, 0}$ and $X^{c,0}$ with $X^{0, 0}_{0} = X^{c, 0}_{0}= X_{0} = z \in \mathbb{N}$ and such that $X^{c, 0}_{t} \leq X_{t} \leq  X^{0, 0}_{t}$ for all $t \geq 0$. Therefore, (iv) follows from the coupling together with (iii) and the case $d = c$ and $\rho = b= 0$ proved before.
\end{proof}


Finally, we show that BPI-processes can be constructed from modified branching processes with migration via a Lamperti type transformation. Recall that $Q= (q_{i,j})_{i,j \in \mathbb{N}_{0}}$ defined in (\ref{BPIQmatrix}) denotes the infinitesimal generator matrix of the BPI-process $Z$. Then, the generator of the BPI-process is the linear operator $(\mathscr{D}(\mathscr{L}^{{\rm BPI}}), \mathscr{L}^{{\rm BPI}})$, where 
\begin{align} \label{eq8}
\mathscr{L}^{{\rm BPI}} f(z) & \coloneqq  z\sum_{i \geq 1} \pi_{i} \left( f \left(z+ i \right) - f(z) \right) \mathds{1}_{\{1 \leq z < \infty \}} + d z\left( f \left(z-1  \right) - f(z) \right) \mathds{1}_{\{1 \leq z < \infty \}}  \nonumber \\
& \hspace*{5mm} + cz (z-1)  \left( f \left(z-1 \right) - f(z) \right) \mathds{1}_{\{1 \leq z < \infty\}}  + z(z-1)\sum_{i \geq 1} b_{i}(f(z+i)-f(z)) \mathds{1}_{\{1 \leq z < \infty \}} \\
& = \sum_{i = -1}^{\infty} q_{z,z+i}(f(z+i) - f(z))\mathbf{1}_{\{0 \leq z < \infty\}}, \nonumber
\end{align}
\noindent for $f \in \mathscr{D}(\mathscr{L}^{{\rm BPI}})$ and $z \in \mathbb{N}_{0, \infty}$ (with the convention $\infty \cdot 0 = 0$) such that 
\begin{eqnarray*}
\mathscr{D}(\mathscr{L}^{{\rm BPI}})  \coloneqq \left \{ f \in B(\mathbb{N}_{0, \infty}, \mathbb{R}):  \sup_{z \in \mathbb{N}_{0}} \sum_{i = -1}^{\infty} |q_{z,z+i}||f(z+i) - f(z)|<\infty  \right\}.
\end{eqnarray*}

Define $\sigma_{0}:= \inf\{t \geq 0: X_{t} =0 \}$ and $\sigma_{1}:= \inf\{t \geq 0: X_{t} =1 \}$; and, for $t\ge 0$, the random clock
\begin{eqnarray*}
\theta_{t} \coloneqq \left\{ \begin{array}{lcl}
             \displaystyle \int_{0}^{t \wedge \sigma_{1}} \frac{{\rm d} s}{X_{s}} & \mbox{  if } & d = \rho = 0, \\[5mm]
              \displaystyle\int_{0}^{t \wedge \sigma_{0}} \frac{{\rm d} s}{X_{s}}  & \mbox{  if } & \text{otherwise}.  \\ 
              \end{array}
    \right.
\end{eqnarray*}
Let us introduce, its right-continuous inverse as follows $t \mapsto C_{t} \coloneqq \inf \{u \geq 0: \theta_{u} > t \} \in [0,\infty]$, with the usual convention $\inf \varnothing = \infty$. The Lamperti time-change of the process $X$ is the process $Z^{{\rm min}} = (Z^{{\rm min}}_{t}, t \geq 0)$ defined by
\begin{eqnarray*}
Z^{{\rm min}}_t  = \left\{ \begin{array}{lcl}
               X_{C_{t}} & \mbox{  if } & 0 \leq t < \theta_{\infty}, \\
              1  & \mbox{  if } & t \geq \theta_{\infty} \, \, \text{and} \, \, \sigma_{1} < \infty, \\
              \infty & \mbox{  if } & t \geq \theta_{\infty} \, \, \text{and} \, \, \sigma_{1} = \infty,  \\ 
              \end{array}
    \right.
\end{eqnarray*}

\noindent if $d = \rho = 0$, and otherwise, 
\begin{eqnarray*}
Z^{{\rm min}}_t  = \left\{ \begin{array}{lcl}
               X_{C_{t}} & \mbox{  if } & 0 \leq t < \theta_{\infty}, \\
              0  & \mbox{  if } & t \geq \theta_{\infty} \, \, \text{and} \, \, \sigma_{0} < \infty, \\
              \infty & \mbox{  if } & t \geq \theta_{\infty} \, \, \text{and} \, \, \sigma_{0} = \infty.  \\ 
              \end{array}
    \right.
\end{eqnarray*}

\noindent If $d>0$ or $\rho >0$, we observe that  $Z^{{\rm min}}$  hits its boundaries $\{0 \}$ or $\{\infty\}$ if and only if $\theta_{\infty} < \infty$. Otherwise, if $d=\rho =0$, we observe that  $Z^{{\rm min}}$ hits  $\{1 \}$ or $\{\infty\}$ if and only if $\theta_{\infty} < \infty$.  In particular, $\{1\}$ is an absorbing state whenever $d = \rho =0$. Let $\zeta^{\rm min}_{0} \coloneqq \inf\{t \geq 0: Z^{{\rm min}}_{t}=0 \}$, $\zeta^{\rm min}_{1} \coloneqq \inf\{t \geq 0: Z^{{\rm min}}_{t}=1\}$ and $\zeta^{\rm min}_{\infty} \coloneqq \inf\{t \geq 0: Z^{{\rm min}}_{t}=\infty \}$. On the one hand, if $d=\rho = 0$ and $\sigma_{1}< \infty$, then $\zeta^{\rm min}_{\infty}  = \infty$ and $\zeta^{\rm min}_{1} = \theta_{\infty} = \int_{0}^{\sigma_{1}} \frac{{\rm d} s}{X_{s}}$. On the other hand, if $\sigma_{1}=\infty$ then $\zeta^{\rm min}_{1} = \infty$ and $\zeta^{\rm min}_{\infty} = \theta_{\infty} = \int_{0}^{\infty} \frac{{\rm d} s}{X_{s}}$. Similarly, if $d > 0$ or $\rho > 0$ and $\sigma_{0}< \infty$, then $\zeta^{\rm min}_{\infty}  = \infty$ and $\zeta^{\rm min}_{0} = \theta_{\infty} = \int_{0}^{\sigma_{0}} \frac{{\rm d} s}{X_{s}}$. If $\sigma_{0}=\infty$ then $\zeta^{\rm min}_{0} = \infty$ and $\zeta^{\rm min}_{\infty} = \theta_{\infty} = \int_{0}^{\infty} \frac{{\rm d} s}{X_{s}}$.

\begin{lemma} \label{lemma2}
For $z \in \mathbb{N}_{0}$, the process $Z^{{\rm min}}$ under $\mathbf{P}_{z}$ is a BPI-process. 
\end{lemma}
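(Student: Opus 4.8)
The plan is to show that $Z^{\min}$ solves the martingale problem associated with $(\mathscr{D}(\mathscr{L}^{\rm BPI}), \mathscr{L}^{\rm BPI})$ and then to invoke well-posedness of that martingale problem, so that $Z^{\min}$ must coincide in law with the BPI-process of Definition \ref{def2}. Since the family $\mathscr{E}$ of Remark \ref{remarkG} separates points, is dense in $C_{0}(\mathbb{N}_{0,\infty},\mathbb{R})$ and is contained in $\mathscr{D}(\mathscr{L}^{\rm BPI})$, it suffices to verify the martingale relation for test functions $f\in\mathscr{E}$. The starting point is Dynkin's formula for $X$: for each $f\in\mathscr{E}\subset\mathscr{D}(\mathscr{L}^{\rm mp})$, the process $M^{f}_{t}:=f(X_{t})-f(X_{0})-\int_{0}^{t}\mathscr{L}^{\rm mp}f(X_{s})\,{\rm d}s$ is a martingale under $\mathbf{P}_{z}$ with respect to the natural filtration $(\mathcal{F}_{t})$ of $X$. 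I note that every $f\in\mathscr{E}$ is bounded (being a finite combination of the $e^{-\lambda z}$ with $\lambda\ge 0$), and a direct inspection of \eqref{eq10} shows that both $\mathscr{L}^{\rm mp}f$ and $\mathscr{L}^{\rm BPI}f=z\,\mathscr{L}^{\rm mp}f$ are likewise bounded and vanish at $\infty$; this boundedness is what will keep the integrability bookkeeping below routine.

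Next I would carry out the time change. On $[0,\theta_{\infty})$ the clock $\theta_{t}$ (built from $\sigma_{1}$ if $d=\rho=0$ and from $\sigma_{0}$ otherwise) is absolutely continuous and strictly increasing as long as $X$ stays in $\mathbb{N}$, so it has a continuous inverse $C$ satisfying ${\rm d}C_{u}=X_{C_{u}}\,{\rm d}u$; in particular $Z^{\min}_{t}=X_{C_{t}}$ has càdlàg paths. A change of variables then gives, for $t<\theta_{\infty}$,
\[
\int_{0}^{C_{t}}\mathscr{L}^{\rm mp}f(X_{s})\,{\rm d}s=\int_{0}^{t}X_{C_{u}}\,\mathscr{L}^{\rm mp}f(X_{C_{u}})\,{\rm d}u=\int_{0}^{t}\mathscr{L}^{\rm BPI}f(Z^{\min}_{u})\,{\rm d}u,
\]
where the last equality uses the pointwise identity $\mathscr{L}^{\rm BPI}f(z)=z\,\mathscr{L}^{\rm mp}f(z)$ recorded just before Definition \ref{def2}. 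Because each $C_{t}$ is a finite $(\mathcal{F}_{s})$-stopping time for $t<\theta_{\infty}$, the time-change theorem for martingales (optional sampling along the family $(C_{t})$; see \cite{Et1986}) shows that $(M^{f}_{C_{t}})$ is a local martingale with respect to $\mathcal{G}_{t}:=\mathcal{F}_{C_{t}}$, and combining the two displays identifies it with
\[
N_{t}:=f(Z^{\min}_{t})-f(Z^{\min}_{0})-\int_{0}^{t}\mathscr{L}^{\rm BPI}f(Z^{\min}_{u})\,{\rm d}u .
\]
Since $|N_{t}|\le 2\Vert f\Vert_{\infty}+\Vert\mathscr{L}^{\rm BPI}f\Vert_{\infty}\,t$ is bounded on each compact $t$-interval, the local martingale $N$ is in fact a true $(\mathcal{G}_{t})$-martingale on $[0,\theta_{\infty})$, which is precisely the Dynkin martingale for $\mathscr{L}^{\rm BPI}$ evaluated along $Z^{\min}$.

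It then remains to extend this identity across the (possibly finite) time $\theta_{\infty}$, and this is where I expect the real work to lie. By construction $Z^{\min}$ is frozen after $\theta_{\infty}$ at the state dictated by $X$: at $0$ or $1$ when $X$ is absorbed there ($\sigma<\infty$), and at the cemetery $\infty$ when $X$ escapes to infinity without absorption ($\sigma=\infty$), the latter being exactly the explosion of $Z^{\min}$ produced by the compression $\theta_{\infty}=\int_{0}^{\infty}{\rm d}s/X_{s}<\infty$. At each absorbing state $\mathscr{L}^{\rm BPI}f\equiv 0$ and $f(\infty)=0$ for $f\in\mathscr{E}$, so the integral term in $N$ does not accumulate after $\theta_{\infty}$ and the frozen value is consistent with the martingale relation; using the boundedness of $f$ and $\mathscr{L}^{\rm BPI}f$ together with dominated convergence as $t\uparrow\theta_{\infty}$, one checks that the martingale property passes to the limit and persists for all $t\ge 0$. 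The delicate points are precisely the verification that no mass is lost at $\theta_{\infty}$ (that the limit $Z^{\min}_{\theta_{\infty}-}$ matches the prescribed absorbing or explosive value $\mathbf{P}_{z}$-a.s.) and that $(\mathcal{G}_{t})$ is a bona fide right-continuous filtration with respect to which $Z^{\min}$ is adapted. Once this is settled, $Z^{\min}$ solves the martingale problem for $(\mathscr{D}(\mathscr{L}^{\rm BPI}),\mathscr{L}^{\rm BPI})$ on $\mathscr{E}$; by the density of $\mathscr{E}$ in $C_{0}(\mathbb{N}_{0,\infty},\mathbb{R})$ from Remark \ref{remarkG} and uniqueness of this martingale problem, $Z^{\min}$ has the law of the BPI-process started at $z$, which proves the lemma.
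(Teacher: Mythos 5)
Your argument follows essentially the same route as the paper's proof: Dynkin's formula for $X$, optional sampling along the stopping times $C_{t}$ to transfer the martingale property through the time change, identification of the resulting process with the Dynkin martingale for $\mathscr{L}^{\rm BPI}$, and conclusion via the martingale problem. The paper simply compresses the technical points you flag (behaviour at $\theta_{\infty}$, well-posedness) by delegating them to the random time-change machinery of Ethier--Kurtz (Lemma 6.1.3 and the proof of Theorem 6.1.3 in \cite{Et1986}), so your more explicit treatment is a faithful expansion of the same proof.
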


\begin{proof}
Observe that
\begin{eqnarray}  \label{eqLamp}
\mathscr{L}^{{\rm BPI}} f(z) =z \mathscr{L}^{{\rm mp}}f(z), \hspace*{5mm} z \in \mathbb{N}_{0, \infty} \hspace*{5mm} \text{and} \hspace*{5mm} f \in B(\mathbb{N}_{0, \infty}, \mathbb{R}),
\end{eqnarray}
\noindent Then, it follows that for $f \in \mathscr{D}(\mathscr{L}^{{\rm BPI}})$, we have that $f \in \mathscr{D}(\mathscr{L}^{{\rm mp}})$. On the one hand, by \cite[Proposition 4.1.7]{Et1986}, the process $(f(X_{t}) - \int_{0}^{t} \mathscr{L}^{{\rm mp}}f(X_{s}) {\rm d}s, t \geq 0 )$ is a martingale, for any $f \in \mathscr{D}(\mathscr{L}^{{\rm BPI}})$. On the other hand, by definition of the time-change, for any $t \in [0, \theta_{\infty})$,  we have the following identities $\int_{0}^{C_{t}} \frac{{\rm d}s}{X_{s}} = t$ and  $C_{t} = \int_{0}^{t} Z_{s}^{{\rm min}} {\rm d}s$. Since we have defined $Z_{t}^{{\rm min}}$ even for $t \geq \theta_{\infty}$, the reasoning of Proposition V.1.5 in \cite{Revuz1999} applies, i.e., a (continuous) time-change of a local martingale remains a local martingale. Hence, for $f \in \mathscr{D}(\mathscr{L}^{{\rm BPI}})$ the process, $M_{t}^{f} = (M_{t}^{f}, t \geq 0)$ given by
\begin{align*}
M_{t}^{f} = f(X_{C_{t}}) - \int_{0}^{C_{t}}\mathscr{L}^{{\rm mp}}(X_{s})  {\rm d}s = f(Z_{t}^{{\rm min}}) - \int_{0}^{t} Z_{s}^{{\rm min}} \mathscr{L}^{{\rm mp}}(Z_{s}^{{\rm min}}) {\rm d}s,
\end{align*}
\noindent for $0 \leq t  < \theta_{\infty}$, and 
\begin{align*}
M_{t}^{f} = f(Z_{t}^{{\rm min}}) - \int_{0}^{C_{\theta_{\infty}-}}\mathscr{L}^{{\rm mp}}(X_{s})  {\rm d}s = f(Z_{t}^{{\rm min}}) - \int_{0}^{\theta_{\infty}} Z_{s}^{{\rm min}} \mathscr{L}^{{\rm mp}}(Z_{s}^{{\rm min}}) {\rm d}s,
\end{align*}
\noindent for $t \geq \theta_{\infty}$, is a local martingale. Since $f \in \mathscr{D}(\mathscr{L}^{{\rm BPI}})$, it follows from (\ref{eqLamp}) that $M_{t}^{f}$ has paths which
are bounded on time-intervals $[0, t]$. Therefore, $M_{t}^{f}$ is a true martingale and we conclude that $Z^{{\rm min}}$ solves the martingale problem associated to the BPI-process.
\end{proof}

\section{Explosion: Proofs of Theorem \ref{Theo1}, Theorem \ref{Theo3}, Proposition \ref{Pro5} and Proposition \ref{lemma3}} \label{sec2}

In this section, we consider the representation of  BPI-processes as a  time-change of  modified branching  processes with migration established in the previous section (see Lemma \ref{lemma2}). For simplicity, we write $Z = (Z_{t}, t \geq 0)$ instead of $Z^{{\rm min}} = (Z_{t}^{{\rm min}}, t \geq 0)$, and we work under the law $\mathbb{P}_{z}$ instead of $\mathbf{P}_{z}$, for $z \in \mathbb{N}_{0}$. We start by stating some preliminary results that are  important ingredients in the proofs of Proposition \ref{Pro5} and Theorems \ref{Theo1} and \ref{Theo3}. For the ease of reading their proofs are referred to the Appendix.

\begin{lemma} \label{lemma13}
If $\mathcal{Q}_{d, \rho}^{c,b}(\theta;1) \in (-\infty, \infty)$, for some $\theta \in (0,1)$, then  $\mathcal{R}_{d, \rho}^{c,b}(\theta; 1) = \infty$ and $\mathcal{E}_{d, \rho}^{c,b}(\theta; 1) = \infty$. 
\end{lemma}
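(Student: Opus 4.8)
The plan is to exploit a simple boundary asymptotics near $u=1$: the hypothesis $\mathcal{Q}_{d,\rho}^{c,b}(\theta;1)\in(-\infty,\infty)$ pins the exponential factor $\mathcal{J}_{d,\rho}^{c,b}(\theta;\cdot)$ between two positive constants close to $1$, while the weight $1/(u\Phi_{c,b}(u))$ is non-integrable there, and non-integrability is what drives both divergences. First I would record the only analytic input needed, namely that under the standing assumption (\ref{main}) the factorisation (\ref{cebolla}) yields
\[
\Phi_{c,b}(u)=(1-u)\Big(c-\sum_{i\ge1}\bar b_i u^i\Big)\le c(1-u),\qquad u\in[0,1],
\]
because every $\bar b_i\ge 0$. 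The virtue of this bound is that it holds \emph{uniformly} in both the subcritical ($\varsigma<0$) and critical ($\varsigma=0$) regimes, so no case distinction is required. Next, since $\mathcal{Q}_{d,\rho}^{c,b}(\theta;1)$ is finite, the limit $\mathcal{J}_{d,\rho}^{c,b}(\theta;1)=\exp(\mathcal{Q}_{d,\rho}^{c,b}(\theta;1))$ lies in $(0,\infty)$, so there are $\theta'\in(\theta,1)$ and $\kappa>0$ with $\mathcal{J}_{d,\rho}^{c,b}(\theta;u)\ge\kappa$ for all $u\in[\theta',1)$.

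For the statement $\mathcal{R}_{d,\rho}^{c,b}(\theta;1)=\infty$ I would then simply combine the two estimates: for $u\in[\theta',1)$ the integrand of $\mathcal{R}_{d,\rho}^{c,b}(\theta;\cdot)$ satisfies
$\mathcal{J}_{d,\rho}^{c,b}(\theta;u)\,(u\Phi_{c,b}(u))^{-1}\ge \kappa\,(c\,u(1-u))^{-1}\ge \kappa\,(c(1-u))^{-1}$,
whose integral over $[\theta',1)$ diverges. Passing to the limit $x\to1$ in $\mathcal{R}_{d,\rho}^{c,b}(\theta;x)=\int_\theta^x \mathcal{J}_{d,\rho}^{c,b}(\theta;u)(u\Phi_{c,b}(u))^{-1}\,{\rm d}u$ (a monotone limit) gives $\mathcal{R}_{d,\rho}^{c,b}(\theta;1)=\infty$.

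For $\mathcal{E}_{d,\rho}^{c,b}(\theta;1)=\infty$ the plan is to reduce to the previous step by a change in the order of integration. Starting from $\mathcal{E}_{d,\rho}^{c,b}(\theta;x)=\int_\theta^x\int_u^x \mathcal{J}_{d,\rho}^{c,b}(u;v)(v\Phi_{c,b}(v))^{-1}\,{\rm d}v\,{\rm d}u$, I would use the cocycle identity $\mathcal{J}_{d,\rho}^{c,b}(u;v)=\mathcal{J}_{d,\rho}^{c,b}(\theta;v)/\mathcal{J}_{d,\rho}^{c,b}(\theta;u)$ (immediate from $\mathcal{Q}_{d,\rho}^{c,b}(u;v)=\mathcal{Q}_{d,\rho}^{c,b}(\theta;v)-\mathcal{Q}_{d,\rho}^{c,b}(\theta;u)$) together with Tonelli's theorem, the integrand being non-negative, to interchange the integrals over $\{\theta\le u\le v\le x\}$ and obtain
\[
\mathcal{E}_{d,\rho}^{c,b}(\theta;x)=\int_\theta^x \frac{\mathcal{J}_{d,\rho}^{c,b}(\theta;v)}{v\Phi_{c,b}(v)}\,\mathcal{S}_{d,\rho}^{c,b}(\theta;v)\,{\rm d}v .
\]
Since $\mathcal{S}_{d,\rho}^{c,b}(\theta;\cdot)$ is nondecreasing and strictly positive, $\mathcal{S}_{d,\rho}^{c,b}(\theta;v)\ge \mathcal{S}_{d,\rho}^{c,b}(\theta;\theta')>0$ for $v\ge\theta'$, whence $\mathcal{E}_{d,\rho}^{c,b}(\theta;x)\ge \mathcal{S}_{d,\rho}^{c,b}(\theta;\theta')\int_{\theta'}^x \mathcal{J}_{d,\rho}^{c,b}(\theta;v)(v\Phi_{c,b}(v))^{-1}\,{\rm d}v$; letting $x\to1$ the right-hand side tends to $\mathcal{S}_{d,\rho}^{c,b}(\theta;\theta')\big(\mathcal{R}_{d,\rho}^{c,b}(\theta;1)-\mathcal{R}_{d,\rho}^{c,b}(\theta;\theta')\big)=\infty$ by the previous step.

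The argument is essentially a routine boundary computation, so I do not expect a genuine obstacle; the two points deserving care are (a) invoking the uniform bound $\Phi_{c,b}(u)\le c(1-u)$ so that both regimes are handled simultaneously, and (b) noting that for the $\mathcal{E}$-bound only the positivity $\mathcal{S}_{d,\rho}^{c,b}(\theta;\theta')>0$ is used, while the value $\mathcal{S}_{d,\rho}^{c,b}(\theta;1)$ (which is in fact finite here) plays no role. The Tonelli interchange is legitimate precisely because all integrands are non-negative, and all limits in $x$ are monotone, so no dominated-convergence subtleties arise.
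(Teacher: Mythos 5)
Your proposal is correct and follows essentially the same route as the paper: bound $\mathcal{J}_{d,\rho}^{c,b}(\theta;\cdot)$ below by a positive constant using the finiteness of $\mathcal{Q}_{d,\rho}^{c,b}(\theta;1)$, combine with $\Phi_{c,b}(u)\le c(1-u)$ from \eqref{eq02} to get $\mathcal{R}_{d,\rho}^{c,b}(\theta;1)=\infty$, and then deduce $\mathcal{E}_{d,\rho}^{c,b}(\theta;1)=\infty$ from this. The only cosmetic difference is in the last step, where the paper reads the divergence off the definition $\mathcal{E}_{d,\rho}^{c,b}(\theta;1)=\int_\theta^1\mathcal{R}_{d,\rho}^{c,b}(u;1)\,{\rm d}u$ (cf.\ Remark \ref{remark8}), while you perform an equivalent Tonelli interchange.
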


\begin{lemma} \label{NEWlemma1}
Assume that $\varsigma < 0$. Then $\mathcal{R}_{d, \rho}^{c,b}(\theta; 1) < \infty$, for some $\theta \in (0,1)$, implies that $\mathcal{E}_{d, \rho}^{c,b}(\theta; 1) < \infty$. 
\end{lemma}

\begin{remark} \label{remark8}
If $\mathcal{R}_{d, \rho}^{c, b}(\theta; 1) = \infty$, for some $\theta\in(0,1)$, then $\mathcal{E}_{d, \rho}^{c, b}(\theta; 1) = \infty$. To see this, note that
\[
\mathcal{E}_{d, \rho}^{c, b}(\theta; x) =  \int_{\theta}^{x} (\mathcal{R}_{d, \rho}^{c, b}(\theta; x)-\mathcal{R}_{d, \rho}^{c, b}(\theta; u)) \mathcal{S}_{d, \rho}^{c, b}(\theta; {\rm d} u),
\] 
\noindent for $x \in [\theta, 1]$, where $\mathcal{S}_{d, \rho}^{c, b}(\theta; {\rm d} u)$ is a strictly positive measure on $(\theta,1)$. 
\end{remark}

\begin{lemma} \label{Pro3}
Suppose that $\varsigma < 0$. Then, $\mathcal{E}_{0, \rho}^{c,b}(\theta; 1)< \infty$ (resp.\ $\mathcal{R}_{0, \rho}^{c,b}(\theta; 1) <\infty$), for some $\theta \in (0,1)$, if and only if 
$\mathcal{E}_{d, \rho}^{c,b}(\theta; 1) < \infty$ (resp.\ $\mathcal{R}_{d, \rho}^{c,b}(\theta; 1) < \infty)$, for some $\theta \in (0,1)$. If in addition $b=0$, then  $\mathcal{E}_{c, \rho}^{c,0}(\theta; 1)< \infty$ (resp.\ $\mathcal{R}_{c, \rho}^{c,0}(\theta; 1)< \infty$), for some $\theta \in (0,1)$, if and only if $\mathcal{E}_{d, \rho}^{c,0}(\theta; 1) < \infty$ (resp.\ $\mathcal{R}_{d, \rho}^{c,0}(\theta; 1) < \infty$), for some $\theta \in (0,1)$.
\end{lemma}

\begin{proposition} \label{Pro6}
For $z \in \mathbb{N}$, we have the following:
\begin{itemize}
\item[(i)] Suppose that $d=0$ and $\rho >0$. If $\mathcal{R}_{0,\rho}^{c,b}(\theta; 1) = \infty$ , for some $\theta \in (0,1)$, then $\mathbb{P}_{z}(\zeta_{\infty} < \infty) = 0$. On the other hand, if $\mathcal{R}_{0,\rho}^{c,b}(\theta; 1) < \infty$ and $\mathcal{E}_{0,\rho}^{c,b}(\theta; 1) < \infty$, for some $\theta \in (0,1)$, then $\mathbb{P}_{z}(\zeta_{\infty} < \infty) =1$.

\item[(ii)] If $d=c$, $\rho >0$ and $b=0$, then $\mathbb{P}_{z}(\zeta_{\infty} < \infty) > 0$ if and only if $\mathcal{R}_{c,\rho}^{c,0}(\theta; 1) < \infty$ for some $\theta \in (0,1)$.

\item[(iii)] If $d=c$ and $\rho = b =0$, then $\mathbb{P}_{z}(\zeta_{\infty} < \infty) = 0$ and $\mathbb{P}_{z}(\zeta_{0} < \infty) =1$.

\item[(iv)] If $d = \rho =0$, then $\mathbb{P}_{z}(\zeta_{\infty} < \infty) = 0$ and $\mathbb{P}_{z}(\zeta_{1} < \infty) =1$.
\end{itemize}
\end{proposition}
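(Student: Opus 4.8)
The plan is to read everything through the Lamperti representation $Z=(X_{C_t})$ of Lemma \ref{lemma2}. Under it the explosion of $Z$ is entirely controlled by the time-change clock: with $\theta_\infty=\int_0^{\sigma_0}X_s^{-1}\,{\rm d}s$ (and $\int_0^{\sigma_1}X_s^{-1}\,{\rm d}s$ when $d=\rho=0$), the path $Z^{{\rm min}}$ reaches $\{\infty\}$ exactly when $\theta_\infty<\infty$ \emph{and} $X$ avoids its lower boundary, i.e.\ $\sigma_0=\infty$ (resp.\ $\sigma_1=\infty$). So in each case I would first use the recurrence/transience dichotomy of Proposition \ref{Pro2}, rephrased through Remark \ref{remark8} (which exchanges the criterion $\mathcal{R}=\infty$ for $\mathcal{E}=\infty$), to decide whether $X$ keeps returning to low states or drifts to $\infty$, and then control $\theta_\infty$ on the drift event.

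For (i) and (ii) the two implications split cleanly. If the relevant $\mathcal{E}$-integral is infinite, Remark \ref{remark8} and Proposition \ref{Pro2} make $X$ recurrent, so it hits its lowest state infinitely often; in (i) this forces $\theta_\infty\ge\int_0^\infty\mathbf 1_{\{X_s=1\}}\,{\rm d}s=\infty$, while in (ii) it forces $\sigma_0<\infty$ and absorption of $Z^{{\rm min}}$ at $0$, and in both cases $Z$ cannot explode. If instead the $\mathcal{E}$-integral is finite, $X$ is transient and I would estimate $\theta_\infty$ via its mean. Using $X_s^{-1}=\int_0^1u^{X_s-1}\,{\rm d}u$ and Fubini, for case (i),
\[
\mathbf E_z[\theta_\infty]=\int_0^1\frac{1}{u}\left(\int_0^\infty G_z(u,s)\,{\rm d}s\right){\rm d}u ,
\]
and the substitution $v=\tilde\Upsilon(s+\Upsilon(u))$, ${\rm d}s={\rm d}v/\Phi_{c,b}(v)$, combined with the explicit $G_z$ of Proposition \ref{Pro1}, turns the inner integral into $u\int_u^1 v^{z}\,(v\Phi_{c,b}(v))^{-1}\mathcal J_{d,\rho}^{c,b}(u;v)\,{\rm d}v$. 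Since $z\ge1$, on $[\theta,1]$ the factor $v^z$ lies between $\theta^z$ and $1$, so the double integral is finite precisely when $\mathcal{E}_{d,\rho}^{c,b}(\theta;1)<\infty$; hence $\theta_\infty<\infty$ (almost surely in (i), and on the positive-probability event $\{\sigma_0=\infty\}$ in (ii)), which yields explosion. Case (ii) uses the same computation applied to $\int_0^\infty X_s^{-1}\mathbf 1_{\{X_s\ge1\}}\,{\rm d}s$, which bounds $\theta_\infty$ on $\{\sigma_0=\infty\}$, up to the boundary correction discussed below.

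For (iii) and (iv) there are no natural births, so the branching mechanism degenerates: $\Psi_{c,0}=\Phi_{c,0}$ in (iii) and $\Psi_{0,0}\equiv0$ in (iv), and Proposition \ref{Pro1} collapses to $G_z(u,t)=(\tilde\Upsilon(t+\Upsilon(u)))^{z}$, respectively $G_z(u,t)=u\,(\tilde\Upsilon(t+\Upsilon(u)))^{z-1}$. In (iii) the process $X$ has only downward jumps, so it is a pure death chain reaching $0$ in finite time, giving $\sigma_0<\infty$, $\theta_\infty\le\sigma_0<\infty$ and absorption of $Z$ at $0$; alternatively one lets $t\to\infty$, using $\tilde\Upsilon(\infty)=1$, to get $\mathbf E_z[u^{X_t}]\to1$, and reads off $\mathbf P_z(\sigma_0<\infty)=1$. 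In (iv) the limit gives $\mathbf E_z[u^{X_t}]\to u$; combined with the transience of $X$ on $\mathbb N\setminus\{1\}$ from Proposition \ref{Pro2}(iii) (which sends the mass at every finite state $\ge2$ to $0$), dominated convergence forces $\mathbf P_z(\sigma_1<\infty)=1$, so $X$ is absorbed at $1$ and $Z^{{\rm min}}$ reaches $1$ while never reaching $\{\infty\}$, exactly as claimed.

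The \emph{main obstacle} is the transient direction of (ii), where $\theta_\infty=\int_0^{\sigma_0}X_s^{-1}\,{\rm d}s$ involves killing at $0$ and the naive integrand $u^{-1}G_z(u,s)$ fails to be integrable near $u=0$ because of the atom $\mathbf P_z(X_s=0)$. I would resolve this by working with $u^{-1}\big(G_z(u,s)-G_z(0,s)\big)=\mathbf E_z[X_s^{-1}\mathbf 1_{\{X_s\ge1\}}]$, i.e.\ subtracting the boundary contribution before integrating; the subtracted term integrates to $\int_0^\infty\mathbf P_z(X_s=0)\,{\rm d}s$, whose value is already available from the proof of Proposition \ref{Pro2}(ii). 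Checking that this correction is finite and that the surviving double integral again matches $\mathcal{E}_{c,\rho}^{c,0}(\theta;1)$ near $v=1$ is the delicate step, for which the bounds \eqref{eq02} on $\Phi_{c,b}$ and the endpoint asymptotics of $\mathcal J_{d,\rho}^{c,b}$ are the essential tools.
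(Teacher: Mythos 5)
Your proposal is correct and follows essentially the same route as the paper: the Lamperti time-change, the recurrence/transience dichotomy of Proposition \ref{Pro2} translated via Remark \ref{remark8}, and the computation of $\mathbf{E}_{z}[\theta_{\infty}]$ from Proposition \ref{Pro1} via the substitution $v=\tilde{\Upsilon}(s+\Upsilon(u))$, which is exactly the paper's identity \eqref{eq17}. The only (harmless) variations are that in the recurrent cases you use the infinite occupation time of the lowest state, respectively the a.s.\ finiteness of $\sigma_{0}$, where the paper uses an excursion-length decomposition, and your subtraction of $G_{z}(0,s)$ in case (ii) is the same device as the paper's inequality bounding $\mathbb{E}_{z}[X_s^{-1},\sigma_0=\infty]$ (the paper additionally pins down $\mathbb{P}_{z}(\sigma_{0}=\infty)\in(0,1)$ explicitly via Proposition \ref{lemma1}, whereas you infer positivity from transience, which also suffices).
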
 

\begin{proof} 
Assume that $\rho >0$. By the time-change $\sigma_{0} < \infty$ implies that $\zeta_{\infty} = \infty$ and $\zeta_{0} = \theta_{\infty} = \int_{0}^{ \sigma_{0}} \frac{{\rm d} s}{X_{s}}$, while $\sigma_{0}= \infty$ implies that $\zeta_{0} = \infty$ and $\zeta_{\infty} = \theta_{\infty} = \int_{0}^{ \infty} \frac{{\rm d} s}{X_{s}}$. In particular, on the event $\{ \sigma_{0} < \infty \}$ the process $Z$ converges towards $0$ a.s.\ and thus never explodes. We then concentrate on the event $\{ \sigma_{0} = \infty \}$. The time-change implies that $Z$ explodes if and only if $\int_{0}^{\infty} \frac{1}{X_{s}} {\rm d}s < \infty$, a.s.\ on $\{ \sigma_{0} = \infty \}$.

We start by proving (i). We first suppose that $\mathcal{R}_{0,\rho}^{c,b}(\theta; 1) = \infty$ and show that $Z$ does not explode. Proposition \ref{Pro2} (i) implies that $X$ is recurrent on $\mathbb{N}$. Consider the successive excursions of $X$ under the level $a >0$. Then, there is an infinite number of such excursions by the recurrence of $X$. Define $h_{0} \coloneqq 0$ and for any $n \geq 1$, $a_{n} \coloneqq \inf \{t > h_{n-1}: X_{t} \leq a\}$, $h_{n} \coloneqq \inf \{t > a_{n}: X_{t} > a\}$. Hence
\begin{eqnarray*}
\int_{0}^{\infty} \frac{{\rm d}s}{X_{s}} \geq \sum_{n \geq 1} \int_{a_{n}}^{h_{n}} \frac{{\rm d}s}{X_{s}} \geq \sum_{n \geq 1} \frac{h_{n}-a_{n}}{a}. 
\end{eqnarray*}

\noindent Since the excursions of $X$ are i.i.d.\ and they have positive lengths, we have that
\begin{eqnarray*}
\mathbb{E}_{z} \left[e^{-\int_{0}^{\infty} \frac{1}{X_{s}} {\rm d}s }, \sigma_{0} = \infty \right] \leq \mathbb{E}_{z} \left[e^{- \frac{1}{a}\sum_{n \geq 1} (h_{n}-a_{n} )}\right] = 0, \hspace*{5mm} z \in \mathbb{N}.
\end{eqnarray*}

\noindent Thus, $\zeta_{\infty} = \theta_{\infty} = \infty$ which implies that $Z_{t} < \infty$ for all $t \geq 0$ and that $Z$ does not explode. 

Now, we suppose that $\mathcal{R}_{0,\rho}^{c,b}(\theta; 1) < \infty$ and $\mathcal{E}_{0,\rho}^{c,b}(\theta; 1) < \infty$. Proposition \ref{Pro2} (i) implies that $X$ is transient on $\mathbb{N}$. Note that the event $\{ \sigma_{0} = \infty \}$ has positive probability; indeed $\mathbb{P}_{z}(\sigma_{0} = \infty) = 1$ since $d=0$. 
Tonelli's theorem and Proposition \ref{Pro1} imply that
\begin{align} \label{additeq1}
\int_{0}^{\infty} \mathbb{E}_{z} \left[ \frac{1}{X_{s}}, \sigma_{0} = \infty \right] {\rm d} s & =   \int_{0}^{1} \int_{0}^{\infty} \mathbb{E}_{z} \left[ u^{ X_{s}-1}  \right] {\rm d}s  {\rm d} u \\
& =   \int_{0}^{1} \int_{0}^{\infty} (\tilde{\Upsilon}(s+ \Upsilon(u)))^{z-1} \exp \left( \int_{u}^{\tilde{\Upsilon}(s+ \Upsilon(u))} \frac{\Psi_{0,\rho}(w)}{w\Phi_{c,b}(w)} {\rm d} w \right) {\rm d}s {\rm d}u\nonumber.
\end{align}
By the change of variable $x = \tilde{\Upsilon}(s+ \Upsilon(u))$, we get that 
\begin{eqnarray} \label{eq17}
\int_{0}^{\infty} \mathbb{E}_{z} \left[ \frac{1}{X_{s}}, \sigma_{0} = \infty \right] {\rm d} s  =  \int_{0}^{1} \int_{u}^{1}  \frac{x^{z}}{\Phi_{c,b}(x)} \exp \left( \int_{u}^{x} \frac{\Psi_{0,\rho}(w)}{w\Phi_{c,b}(w)} {\rm d} w \right) {\rm d}x {\rm d}u.
\end{eqnarray}
Observe that the  first integral at the right-hand side of \eqref{eq17} is always finite for $u$ close to $0$ and since  $\mathcal{E}_{0,\rho}^{c,b}(\theta; 1) < \infty$,  the double integral is finite. Therefore, 
\begin{eqnarray*}
\mathbb{P}_{z}\left( \int_{0}^{\infty}  \frac{1}{X_{s}} {\rm ds} < \infty \big| \sigma_{0} = \infty    \right) = 1, \hspace*{5mm} z \in \mathbb{N},
\end{eqnarray*}
\noindent and $Z$ explodes in finite time almost surely since $\mathbb{P}_{z}\left( \sigma_{0} = \infty    \right) = 1$. 

We now prove (ii). First, suppose that $\mathcal{R}_{c,\rho}^{c,0}(\theta; 1) = \infty$. By Proposition \ref{Pro2} (ii), we know that $X$ is recurrent on $\mathbb{N}_{0}$. Hence, a similar argument as the one used in the proof of part (i) will imply that $Z$ does not explode.

Next, we assume that $\mathcal{R}_{c,\rho}^{c,0}(\theta; 1) < \infty$. By Proposition \ref{Pro2} (ii), we know that $X$ is transient on $\mathbb{N}_{0}$. On the other hand, note that Lemma \ref{NEWlemma1} implies that $\mathcal{E}_{c,\rho}^{c,0}(\theta; 1) < \infty$. Again a similar argument as the one used in the proof of part (i) will imply that $Z$ explodes in finite time. Indeed, we only require to show that 
 the event $\{ \sigma_{0} = \infty \}$ has positive probability and that (\ref{additeq1}) still holds but now as an inequality, i.e. 
 \[
\int_{0}^{\infty} \mathbb{E}_{z} \left[ \frac{1}{X_{s}}, \sigma_{0} = \infty \right] {\rm d} s  \le   \int_{0}^{1} \int_{u}^{1}  \frac{x^{z}}{\Phi_{c,b}(x)} \exp \left( \int_{u}^{x} \frac{\Psi_{0,\rho}(w)}{w\Phi_{c,b}(w)} {\rm d} w \right) {\rm d}x {\rm d}u.
\]
Then the proof proceeds  in the same way. The former  is a consequence of Proposition \ref{lemma1} by taking $a=0$ and letting $\mu \rightarrow 0$,  that is
\begin{eqnarray*}
\mathbb{P}_{z}(\sigma_{0} < \infty) = \lim_{\mu \rightarrow 0}\mathbf{E}_{z}[e^{-\mu \sigma_{0}}] = \frac{\displaystyle\int_{0}^{\infty} \frac{1}{\Phi_{c,0}(e^{-x})} \exp\left( -zx + \int_{\theta}^{e^{-x}} \frac{\Psi_{c, \rho}(w)}{w \Phi_{c,0}(w)} {\rm d}w \right) {\rm d}x }{ \displaystyle\int_{0}^{\infty} \frac{1}{\Phi_{c,0}(e^{-x})} \exp\left(  \int_{\theta}^{e^{-x}} \frac{\Psi_{c, \rho}(w)}{w \Phi_{c,0}(w)} {\rm d}w \right) {\rm d}x} \in (0,1).
\end{eqnarray*}

We now consider the cases when $\rho = 0$, that is, parts (iii) and (iv). We start by showing (iii). Since $d = c$, $\rho = b=0$, the BPI-process only jumps downwards. Therefore, $\mathbb{P}_{z}(\zeta_{\infty} < \infty)=0$ and $\mathbb{P}_{z}(\zeta_{0} < \infty)=1$, for $z \in \mathbb{N}$, which implies (iii).  

Finally, we prove (iv). By the time-change, recall that $d=\rho =0$ implies that $Z$, issued from $z \in \mathbb{N}$, hits  $\{1 \}$ or $\{\infty\}$ if and only if $\theta_{\infty} < \infty$. Recall that $\{1\}$ is an absorbing state in this case. Our goal is to prove that $\sigma_{1} < \infty$, a.s.,\ which would imply that $\zeta_{\infty} = \infty$ and $\mathbb{P}_{z}(\zeta_{1} < \infty)= 1$, i.e., $Z$ does not explode. By Proposition \ref{Pro1}, $G_{z}(u,t) = u (\tilde{\Upsilon}(t + \Upsilon(u)))^{z-1}$, for $t \geq 0$, $u \in [0,1]$ and $z \in \mathbb{N}$. Hence $\mathbf{P}_{z}(\sigma_{1} \leq t) = (\tilde{\Upsilon}(t))^{z-1}$. Therefore, letting $t \rightarrow \infty$ implies $\mathbb{P}_{z}(\zeta_{\infty} < \infty)=0$ and $\mathbb{P}_{z}(\zeta_{1} < \infty)=1$, for $z \in \mathbb{N}$, which proves (iv). 
\end{proof}

To prove Theorem \ref{Theo1}, Theorem \ref{Theo3} and Proposition \ref{Pro5}, we introduce as last ingredient the following coupling or stochastic bounds for  BPI-processes. Let $Z^{0, \rho}=(Z^{0, \rho}_{t}, t \geq 0)$ be a BPI-process with mechanisms $\Psi_{0, \rho}$ and $\Phi_{c,b}$. Let $\tilde{c}=c\lor d$ and $Z^{\tilde{c}, \rho}=(Z^{\tilde{c}, \rho}_{t}, t \geq 0)$ be a BPI-process with mechanisms $\Psi_{\tilde{c}, \rho}$ and $\Phi_{\tilde{c},0}$. 
A classical comparison argument for continuous-time Markov process on $\mathbb{N}_{0,\infty}$ (see for e.g., \cite[Chapters IV and V]{Lindvall1992}) shows that, $Z$ can be coupled with $Z^{0, \rho}$ and $Z^{\tilde{c},\rho}$ with $Z^{0, \rho}_{0} = Z^{\tilde{c}, \rho}_{0}= Z_{0} = z \in \mathbb{N}$ such that $Z^{\tilde{c}, \rho}_{t} \leq Z_{t} \leq  Z^{0, \rho}_{t}$ for all $t \geq 0$. 


\begin{proof}[Proof of Theorem \ref{Theo1}] 
Theorem \ref{Theo1} (i) follows immediately from Proposition \ref{Pro6} (i) and Lemma \ref{NEWlemma1}. Theorem \ref{Theo1} (ii) follows from Proposition \ref{Pro6} (ii) and by using the coupling $Z^{\tilde{c}, \rho}_{t} \leq Z_{t}$ for all $t \geq 0$. Finally, Theorem \ref{Theo1} (iii) follows Proposition \ref{Pro6} (i), Lemma \ref{NEWlemma1}, Lemma \ref{Pro3} and by using the coupling $Z_{t} \leq Z^{0, \rho}_{t}$ for all $t \geq 0$.
\end{proof}

\begin{proof}[Proof of Theorem \ref{Theo3}] 
Theorem \ref{Theo3} (i) is nothing more than Proposition \ref{Pro6} (i). Theorem \ref{Theo3} (ii) follows from Proposition \ref{Pro6} (ii) and by using the coupling $Z_{t} \geq Z_{t}^{\tilde{c}, \rho}$, for all $t \geq 0$. 
Finally, Theorem \ref{Theo3} (iii) follows from Proposition \ref{Pro6} (i) since $Z_{t} \leq Z_{t}^{0, \rho}$, for all $t \geq 0$, again by the coupling argument from above.
\end{proof}

\begin{proof}[Proof of Proposition \ref{Pro5}]
Proposition \ref{Pro5} (i) follows from Proposition \ref{Pro6} (iv) since $Z_{t} \leq  Z^{0, 0}_{t}$, for all $t \geq 0$, by the coupling argument from above. In particular, this implies under our assumptions that $\mathbb{P}_{z} ( \zeta_{1} < \infty) = 1$, for $z \in \mathbb{N}$, which allows us to conclude that $\mathbb{P}_{z} ( \zeta_{0} < \infty) = 1$ because $d>0$ and $\rho =0$. Finally, Proposition \ref{Pro5} (ii) is Proposition \ref{Pro6} (iv).
\end{proof}

\begin{proof}[Proof of Proposition \ref{lemma3}]
Proposition \ref{lemma3} (i) follows from Proposition \ref{Pro6} (i) and Lemma \ref{NEWlemma1}. 

We now prove Proposition \ref{lemma3} (ii). If $d=0$, then (again) Proposition \ref{Pro6} (i) and Lemma \ref{NEWlemma1} imply that  $\mathbb{P}_{z}(\zeta_{\infty} < \infty) =1$. Now, suppose that $z \in \mathbb{N}$ and $d >0$. Since $X$ is irreducible in $\mathbb{N}$ (if $\rho >0$) and there is a positive probability for $X$ to jump from the state $\{ 1\}$ to $\{ 0 \}$, we deduce that $\mathbb{P}_{z}(\sigma_{0} = \infty) < 1$. On the other hand, recall from the discussion at the beginning of the proof of Proposition \ref{Pro6}, $\{ \sigma_{0} < \infty \} \subseteq \{ \zeta_{\infty} = \infty \}$. The above implies that $\mathbb{P}(\zeta_{\infty} < \infty) < 1$, which concludes our proof.
\end{proof}

\section{Duality: Generalisations of Wright-Fisher diffusions} \label{sec3}

The aim of this section is to show that  (sub)critical cooperative BPI-processes are in a duality relationship with  a class of diffusions that we call as generalised Wright-Fisher diffusions. First, we introduce some notation. Let $C([0,1], \mathbb{R})$ be the set of continuous functions from $[0,1]$ to $\mathbb{R}$. Let $C^{2}((0,1), \mathbb{R})$ be the set of continuous functions from $(0,1)$ to $\mathbb{R}$ which together with their first and second derivatives are continuous. Let $C([0,\infty), [0,1])$ be the set of continuous functions from $[0,\infty)$ to $[0,1]$. Let $\mathbb{C}([0,\infty), [0,1])$ denote the space of continuous paths from $[0,\infty)$ to $[0,1]$ furnished with the uniform topology. We also denote by $\mathbb{D}([0,\infty), [0,1])$ the space of c\`adl\`ag paths from $[0,\infty)$ to $[0,1]$ equipped with the Skorokhod topology. 


Consider the linear operator $\mathscr{L}^{{\rm dual}}$ given by
\begin{eqnarray*}
\mathscr{L}^{{\rm dual}}f(u) = \Psi_{d, \rho}(u) f^{\prime}(u) + u \Phi_{c,b}(u) f^{\prime \prime}(u), \hspace*{5mm} u \in (0,1)  \hspace*{5mm} \text{and} \hspace*{5mm} f \in  C([0,1], \mathbb{R}) \cap  C^{2}((0,1), \mathbb{R}).
\end{eqnarray*}

\noindent We shall show that there exists a restriction of the domain 
\begin{align*}
\mathscr{D}( \mathscr{L}^{{\rm dual}}) \coloneqq \Big\{f \in  C([0,1], \mathbb{R}) \cap  C^{2}((0,1), \mathbb{R}):  \mathscr{L}^{{\rm dual}}f \in C([0,1], \mathbb{R})\Big\},
\end{align*}
\noindent such that the operator $\mathscr{L}^{{\rm dual}}$ generates a Feller semi-group on $C([0,1], \mathbb{R})$ that one can associate to a unique (in distribution) Markov process with sample paths in $\mathbb{C}([0,\infty), [0,1])$, and prove that such Markov process is in duality with the (sub)critical cooperative BPI-process. Indeed, there is not a unique semi-group associated with $\mathscr{L}^{{\rm dual}}$ as several boundary conditions are possible. Therefore, one needs to be careful when showing the announced duality. To specify the appropriate restrictions of the domain of the operator $\mathscr{L}^{{\rm dual}}$, we will use Feller's boundary classification for which we refer to Section 8.1 of \cite{Et1986} (see also Chapter 23 of \cite{Kalla2002}). Consider a process taking values on $[\mathfrak{a},\mathfrak{b}]$ with $-\infty \leq \mathfrak{a} < \mathfrak{b} \leq \infty$,
\begin{itemize}
\item[-] the boundary $\mathfrak{a}$ (resp.\ $\mathfrak{b}$) is said to be accessible if with positive probability it will be reached in finite time, i.e., the process can enter into $\mathfrak{a}$ (resp.\ $\mathfrak{b}$). If $\mathfrak{a}$ (resp.\ $\mathfrak{b}$)  is accesible, then either the process cannot get out from $\mathfrak{a}$ (resp.\ $\mathfrak{b}$) and the boundary $\mathfrak{a}$ (resp.\ $\mathfrak{b}$) is said to be an exit or the process can get out from $\mathfrak{a}$ (resp.\ $\mathfrak{b}$) and the boundary $\mathfrak{a}$ (resp.\ $\mathfrak{b}$) is called a regular boundary,

\item[-] the boundary $\mathfrak{a}$ (resp.\ $\mathfrak{b}$) is inaccesible if it cannot be reached  in finite time from the interior of $[\mathfrak{a},\mathfrak{b}]$. If the boundary $\mathfrak{a}$ (resp.\ $\mathfrak{b}$) is inaccesible, then either the process cannot get out from $\mathfrak{a}$ (resp.\ $\mathfrak{b}$), and the boundary $\mathfrak{a}$ (resp.\ $\mathfrak{b}$) is said to be natural or the process can get out from $\mathfrak{a}$ (resp.\ $\mathfrak{b}$) and the boundary $\mathfrak{a}$ (resp.\ $\mathfrak{b}$) is said to be an entrance.
\end{itemize} 

The following two lemmas classify the boundaries $0$ and $1$ of the operator $\mathscr{L}^{{\rm dual}}$. The proofs are rather computational and can be found in  Appendix \ref{Apendice}.
\begin{lemma} \label{lemma14}
The boundary $0$ is classified as follows:
\begin{itemize}
\item[(i)] If $d>c$, then $0$ is an entrance boundary. 
\item[(ii)] If $0<d<c$, then $0$ is a regular reflecting boundary (i.e., regular boundary where the process is reflected, see a proper definition in Appendix \ref{Apendice}). If $d = 0$, then $0$ is an exit boundary. 

\item[(iii)] Suppose that $d =c$. If $\mathcal{S}_{c, \rho}^{c, b}(\theta; 0) = - \infty$, for some $\theta \in (0,1)$, then $0$ is  an entrance boundary. If $|\mathcal{S}_{c, \rho}^{c, b}(\theta; 0)| <  \infty$, for some $\theta \in (0,1)$, then $0$ is a regular reflecting boundary. 
\end{itemize}
\end{lemma}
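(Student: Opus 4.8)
The plan is to read off the nature of the boundary $0$ from Feller's classification of one-dimensional diffusions (Section 8.1 of \cite{Et1986}, or Chapter 23 of \cite{Kalla2002}) applied to the generator $\mathscr{L}^{\rm dual}$, whose second-order coefficient is $a(u)=u\Phi_{c,b}(u)$ and first-order coefficient is $\Psi_{d,\rho}(u)$. Its scale and speed densities are
\[
\mathfrak{s}(u)=\exp\Big(-\mathcal{Q}_{d,\rho}^{c,b}(\theta;u)\Big)=\frac{1}{\mathcal{J}_{d,\rho}^{c,b}(\theta;u)},\qquad \mathfrak{m}(u)=\frac{\mathcal{J}_{d,\rho}^{c,b}(\theta;u)}{u\Phi_{c,b}(u)},
\]
so that $|\mathcal{S}_{d,\rho}^{c,b}(\theta;0)|=\int_0^\theta\mathfrak{s}$ is the scale measure and $|\mathcal{R}_{d,\rho}^{c,b}(\theta;0)|=\int_0^\theta\mathfrak{m}$ the speed measure of $(0,\theta)$. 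Writing the two Feller integrals
\[
\mathsf{A}(\theta)=\int_0^\theta\mathfrak{m}(u)\Big(\int_u^\theta\mathfrak{s}(v)\,{\rm d}v\Big){\rm d}u,\qquad \mathsf{B}(\theta)=\int_0^\theta\mathfrak{s}(u)\Big(\int_u^\theta\mathfrak{m}(v)\,{\rm d}v\Big){\rm d}u,
\]
the boundary $0$ is regular when $\mathsf{A},\mathsf{B}<\infty$, exit when $\mathsf{B}<\infty=\mathsf{A}$, entrance when $\mathsf{A}<\infty=\mathsf{B}$, and natural when both diverge; here $\mathsf{B}<\infty$ encodes accessibility and $\mathsf{A}<\infty$ the possibility of leaving $0$.

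Next I would pin down the behaviour of $\mathfrak{s},\mathfrak{m}$ at $0$. Since $\Psi_{d,\rho}(0)=d$ and $\Phi_{c,b}(0)=c>0$, the integrand $\Psi_{d,\rho}(u)/(u\Phi_{c,b}(u))$ is asymptotic to $d/(cu)$; for $d>0$ this is quantified by the already recorded bounds \eqref{eq35}, giving $(\theta/u)^{d/c-\varepsilon}\le\mathfrak{s}(u)\le(\theta/u)^{d/c+\varepsilon}$ on $(0,\theta)$ and, via \eqref{eq02}, $\mathfrak{m}(u)\asymp u^{d/c-1}$. Hence $\int_0^\theta\mathfrak{s}<\infty$ iff $d/c<1$ and $\int_0^\theta\mathfrak{m}<\infty$ iff $d/c>0$. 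For $d=0$ one instead has $\Psi_{0,\rho}(u)/(u\Phi_{c,b}(u))\to-\rho/c$, so $\mathcal{Q}_{0,\rho}^{c,b}(\theta;0)$ is finite, $\mathfrak{s}$ stays bounded, $\mathfrak{m}(u)\asymp u^{-1}$, and scale is integrable while speed is not.

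The classification then follows regime by regime. When $0<d<c$ both densities are integrable, so $\mathsf{A},\mathsf{B}<\infty$ and $0$ is regular; one then imposes reflection, which fixes the corresponding restriction of $\mathscr{D}(\mathscr{L}^{\rm dual})$. When $d=0$ the scale is integrable but $\mathsf{A}\asymp\int_0^\theta u^{-1}{\rm d}u=\infty$ (speed tail bounded against $\mathfrak{m}\asymp u^{-1}$), so $0$ is exit. When $d>c$ the scale is non-integrable, hence $\mathsf{B}=\infty$, while $\mathsf{A}$ is finite because $\int_u^\theta\mathfrak{s}\asymp u^{1-d/c}$ against $\mathfrak{m}\asymp u^{d/c-1}$ integrates an $O(1)$ function; thus $0$ is entrance. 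Finally, when $d=c$ the speed is always integrable, since $|\mathcal{R}_{c,\rho}^{c,b}(\theta;0)|<\infty$ by the observation preceding Theorem \ref{lemma4}; the outcome is therefore governed by the scale alone, i.e. by $\mathcal{S}_{c,\rho}^{c,b}(\theta;0)$: if it is finite both Feller integrals converge and $0$ is regular, while if it equals $-\infty$ then $\mathsf{B}=\infty$ and $\mathsf{A}<\infty$, giving an entrance boundary.

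The main obstacle is the critical case $d=c$, where $d/c=1$ lands exactly on the borderline of the power comparison \eqref{eq35}: the $\varepsilon$-window straddles the exponent $-1$, so that test cannot by itself decide integrability of the scale, forcing the direct use of $\mathcal{S}_{c,\rho}^{c,b}(\theta;0)$. The delicate computation is to verify, when the scale is non-integrable, that the entrance integral $\mathsf{A}(\theta)$ stays finite (so as to exclude a natural boundary). For this I would use the sharper expansion $\Psi_{c,\rho}(u)/(u\Phi_{c,b}(u))=u^{-1}+(b-\rho)/c+O(u)$ near $0$, which yields $\mathfrak{s}(u)\asymp u^{-1}$ with a genuine constant, together with the Tonelli rearrangement $\mathsf{A}(\theta)=\int_0^\theta\mathfrak{s}(u)\big(\int_0^u\mathfrak{m}(v)\,{\rm d}v\big){\rm d}u$ and the bound $\int_0^u\mathfrak{m}=O(u)$, so that the integrand is $O(1)$ and $\mathsf{A}(\theta)<\infty$. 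A minor additional point is to record precisely the reflecting boundary condition on $\mathscr{D}(\mathscr{L}^{\rm dual})$ in the two regular cases.
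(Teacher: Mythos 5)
Your proposal is correct and follows essentially the same route as the paper: Feller's boundary classification at $0$ via the two integrals $\mathsf{A},\mathsf{B}$ (which are exactly the paper's $\mathcal{E}_{d,\rho}^{c,b}(\theta;0)$ and $\mathcal{I}_{d,\rho}^{c,b}(\theta;0)$), with the power-law asymptotics $\mathfrak{s}(u)\asymp u^{-d/c\pm\varepsilon}$, $\mathfrak{m}(u)\asymp u^{d/c-1\pm\varepsilon}$ drawn from \eqref{eq35} and \eqref{eq02}, and the same case analysis on $d/c$ including the borderline $d=c$ where the sign of $\mathcal{S}_{c,\rho}^{c,b}(\theta;0)$ decides matters. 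The only point to finish off is the one you already flag: in the two regular cases the paper concludes "reflecting" by checking that the speed measure puts no mass at $0$ (immediate from the integrability of $\mathfrak{m}$ near $0$ that you have already established), rather than by imposing it.
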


\begin{lemma}  \label{lemma15}
The boundary $1$ is classified as follows:
\begin{itemize}
\item[(i)] Suppose that $\varsigma <0$. Then $1$ is an exit boundary if $\mathcal{E}_{d,\rho}^{c,b}(\theta;1) =  \infty$, for some $\theta \in (0,1)$, or a regular boundary if $\mathcal{E}_{d,\rho}^{c,b}(\theta;1) <  \infty$.

\item[(ii)] Suppose that $\varsigma =0$. If $\mathcal{I}_{d, \rho}^{c, b}(\theta; 1) < \infty$, for some $\theta \in (0,1)$, then $1$ is an exit or a regular boundary accordingly as $\mathcal{E}_{d,\rho}^{c,b}(\theta;1) =  \infty$ or $\mathcal{E}_{d,\rho}^{c,b}(\theta;1) <  \infty$.  When $\mathcal{I}_{d, \rho}^{c, b}(\theta; 1) = \infty$, for some $\theta \in (0,1)$, then $1$ is a natural boundary or an entrance boundary according as $\mathcal{E}_{d,\rho}^{c,b}(\theta;1) =  \infty$ or  $\mathcal{E}_{d,\rho}^{c,b}(\theta;1) <  \infty$.
\end{itemize}
\end{lemma}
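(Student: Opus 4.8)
The plan is to regard $\mathscr{L}^{{\rm dual}}$ as the generator of a one-dimensional diffusion on $[0,1]$ of the form $\tfrac12\sigma^{2}(u)f''+\mu(u)f'$ with $\tfrac12\sigma^{2}(u)=u\Phi_{c,b}(u)$ and $\mu(u)=\Psi_{d,\rho}(u)$, and to read off the nature of the right endpoint $1$ from Feller's boundary test (Section 8.1 of \cite{Et1986}). The first step is to compute scale and speed. The scale density is $s(u)=\exp(-\int_{\theta}^{u}\Psi_{d,\rho}(w)(w\Phi_{c,b}(w))^{-1}{\rm d}w)=\mathcal{J}_{d,\rho}^{c,b}(\theta;u)^{-1}$ and the speed density is $m(u)=(u\Phi_{c,b}(u)s(u))^{-1}=\mathcal{J}_{d,\rho}^{c,b}(\theta;u)(u\Phi_{c,b}(u))^{-1}$; consequently $\mathcal{S}_{d,\rho}^{c,b}(\theta;\cdot)$ and $\mathcal{R}_{d,\rho}^{c,b}(\theta;\cdot)$ are exactly the scale function and the speed measure accumulated from $\theta$. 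All of these are finite on compact subsets of $(0,1)$, so only the behaviour as $u\uparrow 1$ matters.

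Next I would recall that Feller's classification of the endpoint $1$ is governed by the two mixed integrals, the accessibility integral $\tilde A:=\int_{\theta}^{1}(S(1)-S(u))\,{\rm d}M(u)$ and its dual $\tilde B:=\int_{\theta}^{1}(M(1)-M(u))\,{\rm d}S(u)$, according to the scheme: $1$ is regular, exit, entrance or natural accordingly as $(\tilde A<\infty,\tilde B<\infty)$, $(\tilde A<\infty,\tilde B=\infty)$, $(\tilde A=\infty,\tilde B<\infty)$, $(\tilde A=\infty,\tilde B=\infty)$. The crux is then to identify these with the paper's integrals. A single application of Tonelli gives $\tilde A=\int_{\theta}^{1}s(w)(M(w)-M(\theta))\,{\rm d}w=\int_{\theta}^{1}\mathcal{J}_{d,\rho}^{c,b}(\theta;w)^{-1}\mathcal{R}_{d,\rho}^{c,b}(\theta;w)\,{\rm d}w=\mathcal{I}_{d,\rho}^{c,b}(\theta;1)$, and similarly $\tilde B=\int_{\theta}^{1}m(w)(S(w)-S(\theta))\,{\rm d}w=\int_{\theta}^{1}\mathcal{S}_{d,\rho}^{c,b}(\theta;w)\,{\rm d}M(w)=\mathcal{E}_{d,\rho}^{c,b}(\theta;1)$, the latter matching the definition $\mathcal{E}_{d,\rho}^{c,b}(\theta;x)=\int_{\theta}^{x}\mathcal{R}_{d,\rho}^{c,b}(u;x)\,{\rm d}u$ after writing $\mathcal{R}_{d,\rho}^{c,b}(u;x)=s(u)(M(x)-M(u))$. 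With $\tilde A=\mathcal{I}_{d,\rho}^{c,b}(\theta;1)$ and $\tilde B=\mathcal{E}_{d,\rho}^{c,b}(\theta;1)$, the four-way scheme is precisely the statement of part (ii).

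For part (i) it then suffices to show that $\varsigma<0$ forces $\mathcal{I}_{d,\rho}^{c,b}(\theta;1)=\tilde A<\infty$, so that $1$ is automatically accessible and only the dichotomy exit/regular (driven by $\mathcal{E}$) survives. Here I would use that, by \eqref{cebolla}, $\Phi_{c,b}$ has a simple zero at $1$ when $\varsigma<0$, namely $\Phi_{c,b}(u)\sim -\varsigma(1-u)$, together with the two-sided bound \eqref{eq02}; combined with Lemma \ref{lemma12}, which guarantees $\mathcal{Q}_{d,\rho}^{c,b}(\theta;1)\in[-\infty,\infty)$, one estimates $\mathcal{R}_{d,\rho}^{c,b}(\theta;w)$ and $\mathcal{J}_{d,\rho}^{c,b}(\theta;w)$ as $w\uparrow1$ and checks that the integrand of $\mathcal{I}_{d,\rho}^{c,b}$ is integrable near $1$ (when $\mathcal{Q}_{d,\rho}^{c,b}(\theta;1)$ is finite this reduces to the integrability of a logarithmic singularity).

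The main obstacle will be precisely this asymptotic analysis near $u=1$, and two features make it delicate. First, the critical regime $\varsigma=0$ is genuinely degenerate: by \eqref{Reveq02} the coefficient $\Phi_{c,b}$ then has a double zero at $1$, which is exactly what can push $\mathcal{I}_{d,\rho}^{c,b}(\theta;1)$ to $+\infty$ and render $1$ inaccessible (natural or entrance), so the cases $\mathcal{I}_{d,\rho}^{c,b}(\theta;1)<\infty$ and $\mathcal{I}_{d,\rho}^{c,b}(\theta;1)=\infty$ must both be realised and carefully separated. Second, when $\sum_{i\ge1}i\pi_{i}=\infty$ one has $\mathcal{Q}_{d,\rho}^{c,b}(\theta;1)=-\infty$ and hence $\mathcal{J}_{d,\rho}^{c,b}(\theta;w)\to0$, so the scale and speed densities blow up at opposite rates and the finiteness of $\tilde A$ rests on a careful cancellation; an integration-by-parts argument, or working directly from the representation $\tilde A=\int_{\theta}^{1}(S(1)-S(u))\,{\rm d}M(u)$, is the cleanest way to control it. These computations are routine but lengthy, which is why they are deferred to the Appendix.
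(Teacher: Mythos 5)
Your proposal takes the same route as the paper: the lemma is read off from Feller's test at the endpoint $1$ once one identifies $\mathcal{S}_{d,\rho}^{c,b}(\theta;\cdot)$ and $\mathcal{R}_{d,\rho}^{c,b}(\theta;\cdot)$ with the scale function and speed measure and the two mixed integrals with $\mathcal{I}_{d,\rho}^{c,b}(\theta;1)$ and $\mathcal{E}_{d,\rho}^{c,b}(\theta;1)$ (the paper records exactly this dictionary just before the proof of Lemma~\ref{lemma14}), so that part (ii) is literally the four-way table and part (i) reduces to showing $\mathcal{I}_{d,\rho}^{c,b}(\theta;1)<\infty$ whenever $\varsigma<0$. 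That reduction, and your identifications of $\tilde A$ and $\tilde B$, are correct.

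The one place where your sketch does not yet close is the estimate $\mathcal{I}_{d,\rho}^{c,b}(\theta;1)<\infty$ for $\varsigma<0$. The ingredients you list --- the simple zero of $\Phi_{c,b}$ at $1$, the two-sided bound \eqref{eq02}, and Lemma~\ref{lemma12} --- are not sufficient on their own: when $\mathcal{Q}_{d,\rho}^{c,b}(\theta;1)=-\infty$ the factor $e^{-\mathcal{Q}_{d,\rho}^{c,b}(x;u)}$ in the integrand of $\mathcal{I}$ can a priori blow up like $\bigl((1-x)/(1-u)\bigr)^{K}$ for a fixed $K$, and for $K\geq 1$ the resulting double integral diverges; no ``cancellation'' or integration by parts rescues this in general. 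The decisive observation, which the paper uses and your outline omits, is that $\Psi_{d,\rho}(1)=0$, so that for any $\varepsilon>0$ one may choose $\theta$ with $|\Psi_{d,\rho}(u)|<\varepsilon$ on $[\theta,1]$; combined with $\Phi_{c,b}(w)\geq-\varsigma(1-w)$ this gives $\mathcal{Q}_{d,\rho}^{c,b}(x;y)\geq(\varepsilon/\varsigma)\ln\bigl(\tfrac{y}{1-y}\tfrac{1-x}{x}\bigr)$, i.e.\ the exponent $K=\varepsilon/|\varsigma|$ above can be made arbitrarily small, and taking $\varepsilon<|\varsigma|$ makes the double integral bounding $\mathcal{I}_{d,\rho}^{c,b}(\theta;1)$ finite. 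With that single addition your argument matches the paper's.
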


\begin{remark} \label{remark9}
A more precise classification of the boundary $1$ can be given when it is regular, i.e. when $\mathcal{I}_{d,\rho}^{c,b}(\theta;1) <  \infty$ and $\mathcal{E}_{d,\rho}^{c,b}(\theta;1) <  \infty$. However, in what follows we assume that assumption \eqref{mainE1} is always satisfied which guarantee that $\mathcal{E}_{d,\rho}^{c,b}(\theta;1) = \infty$. The above is a consequence of the definition of $\mathcal{R}_{d,\rho}^{c,b}(\theta;1)$ and Remark \ref{remark8}. 

On the other hand, since \eqref{mainE1} implies that $\mathcal{E}_{d,\rho}^{c,b}(\theta;1) = \infty$, Lemma \ref{lemma15} ensures that $1$ is a natural boundary when $\mathcal{I}_{d,\rho}^{c,b}(\theta;1) =  \infty$.
\end{remark}

Now, we establish the duality between (sub)critical cooperative BPI-processes $Z = (Z_{t}, t \geq 0)$ and some generalisations of Wright-Fisher diffusions under assumption \eqref{mainE1}. Recall that $\mathbb{P}_{z}$ denotes the law of $Z$ starting from $z \in \mathbb{N}_{0}$. 
\begin{theorem} \label{Theo2}
Suppose that assumption \eqref{mainE1} is satisfied.  Fix $\theta \in (0,1)$, and for $i =0,1$, define 
\begin{eqnarray*}
\mathscr{D}_{i}  (\mathscr{L}^{{\rm dual}} ) \coloneqq \left\{ \begin{array}{lcl}
             \mathscr{D}(\mathscr{L}^{{\rm dual}} )    & \mbox{  if } & i \, \, \text{is inaccessible}, \\
              \{ f \in  \mathscr{D}(\mathscr{L}^{{\rm dual}} ) :   \lim_{u \rightarrow i} \mathscr{L}^{{\rm dual}} f(u) = 0 \}& \mbox{  if } & i \, \, \text{is an exit}, \\
               \{ f \in  \mathscr{D}(\mathscr{L}^{{\rm dual}} )  :   \lim_{u \rightarrow 0} \mathcal{J}^{c,b}_{d, \rho}(\theta; u) f^{\prime}(u) = 0 \}& \mbox{  if } & i=0 \, \, \text{is regular reflecting}.
              \end{array}
    \right.
\end{eqnarray*}

\noindent There exists a unique (in distribution) Markov process $U = (U_{t}, t \geq 0)$ with sample paths in $\mathbb{C}([0,\infty), [0,1])$ whose transition semi-group is generated by 
\begin{eqnarray*}
\mathscr{A}^{\rm dual} \coloneqq \Big\{ (f,\mathscr{L}^{{\rm dual}}f) : f \in \mathscr{D}_{0}(\mathscr{L}^{{\rm dual}} ) \cap \mathscr{D}_{1}(\mathscr{L}^{{\rm dual}} ) \Big\}.
\end{eqnarray*}
\noindent Moreover, let $\mathbb{Q}_{u}$ denote the law of $U$ issued from $u \in [0,1]$, then
\begin{eqnarray} \label{eq11}
\mathbb{E}_{z}\left[u^{Z_{t}} \right] = \mathbb{E}_{\mathbb{Q}_{u}} \left[U_{t}^{z} \right], \hspace*{4mm} \text{for all} \hspace*{2mm} t \geq 0, \hspace*{2mm} z \in \mathbb{N} \hspace*{2mm} \text{and} \hspace*{2mm} u \in [0,1].
\end{eqnarray}
\end{theorem}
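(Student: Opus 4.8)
The plan is to prove the two assertions of the statement separately: first the existence and uniqueness of the diffusion $U$, and then the moment duality \eqref{eq11}. For the first part I would appeal to the classical theory of one-dimensional diffusions. The operator $\mathscr{L}^{{\rm dual}}$ is a regular diffusion generator on $(0,1)$ whose scale density is $1/\mathcal{J}^{c,b}_{d,\rho}(\theta;\cdot)$ and whose speed density is proportional to $\mathcal{J}^{c,b}_{d,\rho}(\theta;\cdot)/(u\Phi_{c,b}(u))$. The boundary behaviour at $0$ and $1$ has already been settled in Lemmas \ref{lemma14} and \ref{lemma15}, and under \eqref{mainE1} (so that $\mathcal{E}^{c,b}_{d,\rho}(\theta;1)=\infty$, see Remark \ref{remark9}) the endpoint $1$ is either an exit or a natural boundary, whence no lateral condition beyond the one recorded in $\mathscr{D}_{1}(\mathscr{L}^{{\rm dual}})$ is required. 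I would then check the hypotheses of the Hille--Yosida theorem for $(\mathscr{A}^{\rm dual},\mathscr{D}_{0}\cap\mathscr{D}_{1})$: density of the domain in $C([0,1],\mathbb{R})$, the positive maximum principle, and solvability of the resolvent equation $(\lambda-\mathscr{L}^{{\rm dual}})f=g$ for every $\lambda>0$ and $g\in C([0,1],\mathbb{R})$, the solution being built explicitly from the scale function and speed measure subject to the prescribed boundary conditions. This produces a unique Feller semigroup and hence a Markov process unique in law; continuity of its paths follows from the purely local, second-order (jump-free) form of $\mathscr{L}^{{\rm dual}}$. I would cite Chapter 8 of \cite{Et1986} and Chapter 23 of \cite{Kalla2002} for these facts.

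The core of the duality is an infinitesimal identity for the duality function $H(z,u)=u^{z}$. Using $\Psi_{d,\rho}(u)=d-(d+\rho)u+\sum_{i\ge1}\pi_{i}u^{i+1}$ and $\Phi_{c,b}(u)=c-(c+b)u+\sum_{i\ge1}b_{i}u^{i+1}$, a direct computation from \eqref{eq8} gives
\[
\mathscr{L}^{{\rm BPI}}\big(H(\cdot,u)\big)(z)=z u^{z-1}\big(\Psi_{d,\rho}(u)+(z-1)\Phi_{c,b}(u)\big),
\]
while differentiating $u\mapsto u^{z}$ twice yields exactly
\[
\mathscr{L}^{{\rm dual}}\big(H(z,\cdot)\big)(u)=z u^{z-1}\big(\Psi_{d,\rho}(u)+(z-1)\Phi_{c,b}(u)\big),
\]
so the two coincide for all $z\in\mathbb{N}$ and $u\in[0,1]$. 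I would then verify that each monomial lies in the appropriate domain. On the branching side, $H(\cdot,u)=u^{\cdot}\in\mathscr{E}\subset\mathscr{D}(\mathscr{L}^{{\rm BPI}})$ and $\mathscr{L}^{{\rm BPI}}(u^{\cdot})$ vanishes at $\infty$. On the diffusion side, $H(z,\cdot)=(\cdot)^{z}\in\mathscr{D}_{0}(\mathscr{L}^{{\rm dual}})\cap\mathscr{D}_{1}(\mathscr{L}^{{\rm dual}})$: the condition at $1$ holds because $\Psi_{d,\rho}(1)=\Phi_{c,b}(1)=0$, so $\mathscr{L}^{{\rm dual}}(u^{z})\to0$ as $u\to1$, and the condition at $0$ is checked case by case from the boundary type in Lemma \ref{lemma14} together with the estimate $\mathcal{J}^{c,b}_{d,\rho}(\theta;u)\asymp u^{d/c}$ as $u\to0$, which follows from \eqref{eq35}.

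With the generator identity in hand I would run the standard interpolation argument. Taking $Z$ and $U$ independent with $Z_{0}=z$ and $U_{0}=u$, set $\Xi(s):=\mathbb{E}_{z}\otimes\mathbb{Q}_{u}\big[\,U_{t-s}^{Z_{s}}\,\big]$ for $s\in[0,t]$, so that $\Xi(0)=\mathbb{E}_{\mathbb{Q}_{u}}[U_{t}^{z}]$ and $\Xi(t)=\mathbb{E}_{z}[u^{Z_{t}}]$. Using that $H(Z_{s},u)-\int_{0}^{s}\mathscr{L}^{{\rm BPI}}H(\cdot,u)(Z_{r})\,\mathrm{d}r$ and $H(z,U_{s})-\int_{0}^{s}\mathscr{L}^{{\rm dual}}H(z,\cdot)(U_{r})\,\mathrm{d}r$ are martingales, Fubini (justified by independence), and the pointwise identity above, the two infinitesimal contributions to $\Xi'(s)$ cancel, so $\Xi$ is constant and $\Xi(0)=\Xi(t)$, which is precisely \eqref{eq11}.

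The delicate point, and the main obstacle, is the integrability needed to differentiate through the expectation, since $\mathscr{L}^{{\rm BPI}}H(\cdot,u)(z)$ carries the quadratic factor $z(z-1)$ coming from the interaction term and $\sup_{z,u}z^{2}u^{z-1}=\infty$ near $u=1$. I would circumvent this by localising $Z$ at the stopping times $\tau_{n}:=\inf\{s\ge0:Z_{s}\ge n\}$, carrying out the crossing identity for the bounded process $Z_{\cdot\wedge\tau_{n}}$, and then letting $n\to\infty$. Under \eqref{mainE1} the process $Z$ is conservative by Theorems \ref{Theo1} and \ref{Theo3}, hence $\tau_{n}\uparrow\infty$ almost surely, and since $|U^{Z}|\le1$ the residual error terms vanish by dominated convergence. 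This is the step requiring the most care, and it is exactly where the (sub)critical cooperative hypothesis, through conservativeness, is genuinely used; it is also what forces an extension of the subcritical argument of \cite{Pa2017} to the critical regime.
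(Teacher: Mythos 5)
The existence/uniqueness part and the generator identity $\mathscr{L}^{{\rm BPI}}H(\cdot,u)(z)=\mathscr{L}^{{\rm dual}}H(z,\cdot)(u)$ are fine and essentially match the paper's argument (the paper invokes Theorem 8.1.1 of \cite{Et1986} rather than re-proving Hille--Yosida, and it establishes path continuity by a Kolmogorov-criterion computation with the test functions $(u-x)^{2}$ and $(u-x)^{4}$ rather than by appeal to locality, but these are cosmetic differences).

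The gap is in the localisation step, which you correctly single out as the delicate point but resolve on the wrong side of the duality. You stop $Z$ at $\tau_{n}=\inf\{s:Z_{s}\ge n\}$ and assert that $Z_{\cdot\wedge\tau_{n}}$ is bounded; it is not, because the overshoot $Z_{\tau_{n}}$ is a jump governed by $(\pi_{i})$ or $(b_{i})$, and the paper makes no moment assumptions beyond $\sum_{i}\pi_{i}<\infty$, $\sum_{i}b_{i}<\infty$. The quantity you must dominate is not $|U^{Z}|\le 1$ but the cross term $h(z,u)=\Psi_{d,\rho}(u)zu^{z-1}+u\Phi_{c,b}(u)z(z-1)u^{z-2}$: uniformly in $u\in[0,1]$ one only has $|h(z,u)|\le (d+\rho)z+(c+b)z(z-1)$, so the Ethier--Kurtz integrability condition for the stopped process requires $\mathbf{E}[Z_{\tau_{n}}^{2}]<\infty$, i.e.\ a second moment of the offspring laws that is not assumed. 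Moreover, even granting integrability, stopping $Z$ breaks the generator matching after $\tau_{n}$ and leaves a residual $\int_{0}^{t}\mathbb{E}[\mathbf{1}_{\{s\ge\tau_{n}\}}\,h(Z_{\tau_{n}},U_{t-s})]\,{\rm d}s$, which is bounded only by a multiple of $(1-U_{t-s})^{-2}$ and is not dominated by an integrable function, so "dominated convergence as $n\to\infty$" does not close the argument.

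The paper localises the \emph{dual} process instead: it applies \cite[Corollary 4.4.15]{Et1986} to $Z$ and the stopped diffusion $(U_{t\wedge\tau_{w}})$ with $\tau_{w}=\inf\{t:U_{t}=w\}$, $w<1$. For $u\in[0,w]$ one has $z(z-1)u^{z-1}\le C_{w}$ uniformly in $z$, so $|h(z,u)|\le C_{w}$ and the integrability condition is immediate with no moment hypotheses. The crossing identity then reads $\mathbb{E}_{\mathbb{Q}_{u}}[U_{t\wedge\tau_{w}}^{z}]-\mathbb{E}_{z}[u^{Z_{t}}]=w^{z}-\mathbb{E}_{z}[w^{Z_{t-t\wedge\tau_{w}}}]$, and letting $w\to 1$ the right-hand side tends to $1-\mathbb{P}_{z}(Z_{t-t\wedge\tau_{1}}<\infty)=0$ by conservativeness under \eqref{mainE1}; the identification $\mathbb{E}_{\mathbb{Q}_{u}}[U_{t\wedge\tau_{1}}^{z}]=\mathbb{E}_{\mathbb{Q}_{u}}[U_{t}^{z}]$ then uses that $1$ is an exit or natural boundary (Lemma \ref{lemma15} and Remark \ref{remark9}). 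You should restructure your argument along these lines, or else add the moment hypotheses your localisation secretly requires.
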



The identity in  (\ref{eq11}) is known as {\sl moment duality} (see e.g., \cite{Jan2014}) and has appeared between many interesting branching processes with interactions and frequency processes in population genetics. The most simple example corresponds to the case $d=\rho=b =0$ and thus, $U$ is the classical Wright-Fisher diffusion. Furthermore, by choosing $d=b=0$, $c=1$, $\pi_{1} = \rho$ and $\pi_{i} = 0$, for all $i \geq 2$, one sees that $U$ is the  so-called Wright-Fisher diffusion with selection. A formal construction via diffusion approximations of the previous two stochastic processes can be found in Section 10.2 of   \cite{Et1986}. Recently, Gonz\'alez-Casanova et al.\ \cite{PaMi2019} studied a model that they called Wright-Fisher diffusion with efficiency which corresponds to the case $d=0$, $c=1$, $\pi_{1} = \rho$, $b_{1} = b \in [0,1]$ and $\pi_{i}=b_{i} = 0$ for all $i \geq 2$. The previous examples emphasise the relevance of this dual relationship. We refer to the works of Alkemper and Hutzenthaler \cite{Alk2007}, Gonz\'alez-Casanova et al.\ \cite{Pa2017} and references therein for further examples.

\begin{proof}[Proof of Theorem \ref{Theo2}]
From Theorem 8.1.1 in \cite{Et1986}, we deduce that $\mathscr{A}^{\rm dual}$ generates a Feller semi-group on $C([0,\infty), [0,1])$. Then, by Theorem 4.2.7 and Proposition 4.1.6 in \cite{Et1986}, there exists a unique (in distribution) Markov process $U = (U_{t}, t \geq 0 )$ with sample paths in $\mathbb{D}([0,\infty), [0,1])$ whose transition semi-group is generated by $\mathscr{A}^{\rm dual}$. We claim that  $U$ admits a modification with sample paths in $\mathbb{C}([0,\infty), [0,1])$. In order to do so, we will show that $U$ satisfies the conditions of the Kolmogorov's continuity criterion (see \cite[Theorem I.1.8]{Revuz1999}). Consider the functions $g_{x}^{(1)}(u) = (u-x)^{2}$ and $g_{x}^{(2)}(u) = (u-x)^{4}$, for $u,x \in [0,1]$. Note that $g_{x}^{(1)}(\cdot), g_{x}^{(2)}(\cdot) \in  \mathscr{A}^{\rm dual}$, for all $x \in [0,1]$. To see the latter, we  only need to be careful in the case when $0$ is regular reflecting but such case follows from the claim that if  $d>0$ then $\mathcal{J}_{d, \rho}^{c, b}(\theta; 0) = 0$, for some $\theta \in (0,1)$. Indeed, if $d>0$ we 
  note that $\lim_{u \rightarrow 0} \Psi_{d, \rho}(u)/\Phi_{c, b}(u) =d/c \in (0,\infty)$ which implies that  for all $\varepsilon >0$, there exists $\theta \in (0,1)$ such that 
\begin{eqnarray}  \label{eq35}
(d/c-\varepsilon) \ln (\theta / x) \leq \Big|\mathcal{Q}_{d,\rho}^{c,b}(\theta;x)\Big| \leq (d/c+\varepsilon) \ln (\theta / x), \qquad \text{for} \qquad x \in (0, \theta).
\end{eqnarray}
From the latter, we clearly deduce that  $\mathcal{Q}_{d, \rho}^{c, b}(\theta; 0) =- \infty$ and therefore $\mathcal{J}_{d, \rho}^{c, b}(\theta; 0) = 0$, for some $\theta \in (0,1)$ as claimed.

On the other hand, there exists a (finite) constant $C >0$ such that $|\mathscr{L}^{{\rm dual}}g_{x}^{(1)}(u)| \leq C$ and  $|\mathscr{L}^{{\rm dual}}g_{x}^{(2)}(u)| \leq Cg_{x}^{(1)}(u)$, for $u,x \in [0,1]$. Then, it follows from Proposition 1.1.5 in \cite{Et1986} that, for $t \geq 0$ and $x \in [0,1]$,
\begin{align*}
\mathbb{E}_{\mathbb{Q}_{x}}\Big[(U_{t}-x)^{2}\Big] \leq \int_{0}^{t} \mathbb{E}_{\mathbb{Q}_{x}}\Big[|\mathscr{L}^{{\rm dual}}g_{x}^{(1)}(U_{s})|\Big] {\rm d} s \leq C t. 
\end{align*}
\noindent Another application of Proposition 1.1.5 in \cite{Et1986} and the previous inequality show that, for $t \geq 0$ and $x \in [0,1]$,
\begin{align*}
\mathbb{E}_{\mathbb{Q}_{x}}\Big[(U_{t}-x)^{4}\Big] \leq \int_{0}^{t} \mathbb{E}_{\mathbb{Q}_{x}}\Big[|\mathscr{L}^{{\rm dual}}g_{x}^{(2)}(U_{s})|\Big] {\rm d} s \leq C \int_{0}^{t} \mathbb{E}_{\mathbb{Q}_{x}}\Big[(U_{s}-x)^{2}\Big] {\rm d} s \leq C^{2}t^{2}. 
\end{align*}
\noindent Then, the Markov property and the previous inequality imply that, for $0 \leq s < t$ and $u \in [0,1]$,
\begin{align*}
\mathbb{E}_{\mathbb{Q}_{u}}\Big[(U_{t}-U_{s})^{4}\Big] = \mathbb{E}_{\mathbb{Q}_{u}}\left[ \mathbb{E}_{\mathbb{Q}_{U_{s}}}\Big[ (U_{t-s}-U_{0})^{4}\Big] \right] \leq C^{2}(t-s)^{2}.
\end{align*}
\noindent Finally, the above shows that the conditions \cite[Theorem I.1.8]{Revuz1999} are satisfied with $\alpha = 4$ and $\beta =1$. This concludes with the proof of the first part of this Theorem.

We now proceed with the proof of (\ref{eq11}). Consider the function $H(z,u) = u^{z}$, for $u \in [0,1]$ and $z \in \mathbb{N}_{0}$. Note that $H(\cdot, u) \in \mathscr{D}(\mathscr{L}^{{\rm BPI}})$ and that $H(z, \cdot) \in \mathscr{D}_{0}(\mathscr{L}^{{\rm dual}}) \cap \mathscr{D}_{1}(\mathscr{L}^{{\rm dual}})$.   Again to see the latter, we need to be careful in the case when $0$ is regular reflecting but in this case recall that $\mathcal{J}_{d, \rho}^{c, b}(\theta; 0) = 0$, for some $\theta \in (0,1)$, which guarantees that $H(z, \cdot) \in \mathscr{D}_{0}(\mathscr{L}^{{\rm dual}}) \cap \mathscr{D}_{1}(\mathscr{L}^{{\rm dual}})$. Then, it follows from \cite[Proposition 4.1.7]{Et1986} that the processes
\begin{eqnarray*}
\left(u^{Z_{t}} - \int_{0}^{t} \mathscr{L}^{{\rm BPI}}H(\cdot, u)(Z_{s}) {\rm d} s, t \geq 0 \right) \hspace*{2mm} \text{and} \hspace*{2mm} \left(U_{t}^{z} - \int_{0}^{t} \mathscr{L}^{{\rm dual}}H(z, \cdot)(U_{s}) {\rm d} s, t \geq 0 \right) 
\end{eqnarray*}

\noindent are martingales, for $u \in [0,1]$ and $z \in \mathbb{N}_{0}$. Note that
\begin{align} \label{eq13}
\mathscr{L}^{{\rm BPI}}H(\cdot, u)(z) & =  \sum_{i \geq 1} z \pi_{i} \left( u^{z+ i} - u^{z} \right) + d z\left( u^{z-1} - u^{z} \right) \nonumber  \\
& \hspace*{10mm} +
c z(z-1)  \left( u^{z-1} - u^{z} \right) + z(z-1)\sum_{i \geq 1} b_{i}(u^{z+i} - u^{z}) \nonumber \\
& =  \Psi_{d, \rho}(u) z u^{z-1} + u\Phi_{c,b}(u)  z (z-1) u^{z-2} =  \mathscr{L}^{{\rm dual}}H(z, \cdot)(u), 
\end{align}
\noindent for $z \in \mathbb{N}$ and $u \in [0,1]$. We also have that $\mathscr{L}^{{\rm BPI}}H(\cdot, u)(0) = 0 = \mathscr{L}^{{\rm dual}}H(0, \cdot)(u)$, for $u \in [0,1]$. Set $h(z,u) = \Psi_{d, \rho}(u) z u^{z-1} + u\Phi_{c,b}(u)  z (z-1) u^{z-2}$, for $u \in [0,1]$ and $z \in \mathbb{N}_{0}$. We apply Ethier-Kurtz's duality result \cite[Corollary 4.4.15]{Et1986} (with $\alpha = \beta=0$, $\tau= \infty$, $\sigma = \tau_{w}$) to the (sub)critical cooperative BPI-process $Z$ and the stopped process $(U_{t \wedge \tau_{w}},t \geq 0)$ where $\tau_{w} \coloneqq \inf\{t \geq 0: U_{t} = w\}$, for $w\in (0,1)$. To apply \cite[Corollary 4.4.15]{Et1986}, an integrability condition \cite[(4.50)]{Et1986} is required. We will verify the latter below and proceed with the proof of  (\ref{eq11}). Following the framework of \cite[Corollary 4.4.15]{Et1986}, we consider that the processes $Z$ and $U$ are independent and we can verify similarly that,  for $u \in [0, w]$ and $z \in \mathbb{N}$, the following identity is satisfied
\begin{eqnarray*}
\mathbb{E}_{z}[u^{Z_{t}}] - \mathbb{E}_{\mathbb{Q}_{u}}[U_{t \wedge \tau_{w}}^{z}] = \int_{0}^{t} \mathbb{E}_{z,u}[\mathbf{1}_{\{t-s > \tau_{w}\}} h(Z_{s}, U_{(t-s) \wedge \tau_{w}})] {\rm d} s = \mathbb{E}_{z,u} \left[ \int_{0}^{t-t\wedge\tau_{w}}  h(Z_{s}, w) {\rm d} s \right],
\end{eqnarray*}

\noindent where the expectation $\mathbb{E}_{z,u}$ is with respect the product measure $\mathbb{P}_{z}\times\mathbb{Q}_{u}$. Since the process $(w^{Z_{t}}- \int_{0}^{t} h(Z_{s}, w) {\rm d}s, t \geq 0)$ is a martingale and $\tau_{w}$ is independent of $Z$, we deduce that
\begin{eqnarray*}
\mathbb{E}_{\mathbb{Q}_{u}}[U_{t \wedge \tau_{w}}^{z}] - \mathbb{E}_{z}[u^{Z_{t}}] = w^{z} - \mathbb{E}_{z}[w^{Z_{t-t\wedge\tau_{w}}}].
\end{eqnarray*}

\noindent On the one hand, by letting $w \rightarrow 1$, $\tau_{w} \rightarrow \tau_{1}$ a.s. On the other hand, by Theorems \ref{Theo1} and \ref{Theo3}, we know that $Z$ does not explode in finite time under  condition (\ref{mainE1}). Therefore,
\begin{eqnarray*}
\lim_{w \rightarrow 1} \Big(\mathbb{E}_{\mathbb{Q}_{u}}[U_{t \wedge \tau_{w}}^{z}] - \mathbb{E}_{z}[u^{Z_{t}}] \Big)= 1 - \mathbb{P}_{z}(Z_{t-t\wedge\tau_{1}} < \infty) = 0, 
\end{eqnarray*}

\noindent i.e., $\mathbb{E}_{z}[u^{Z_{t}}] = \mathbb{E}_{\mathbb{Q}_{u}}[U_{t \wedge \tau_{1}}^{z}]$, for all $u \in [0,1]$ and $z \in \mathbb{N}$. Finally, (\ref{eq11}) follows from Lemma \ref{lemma15} and Remark \ref{remark9}  since $1$ is either an exit or a natural boundary for $U$, under assumption (\ref{mainE1}). 

It remains to verify the technical condition \cite[(4.50)]{Et1986}. Namely, for any $T > 0$ and $w \in (0,1)$ fixed, we need to show that the random variables
\begin{eqnarray*}
\sup_{s,t \in [0,T]} H(Z_{t}, U_{s \wedge \tau_{w}}) \qquad \text{ and } \qquad \sup_{s,t \in [0,T]} |h(Z_{t}, U_{s \wedge \tau_{w}})|
\end{eqnarray*}

\noindent are integrable with respect to $\mathbb{P}_{z}\times\mathbb{Q}_{u}$. Since $\sup_{s,t \in [0,T]} H(Z_{t}, U_{s \wedge \tau_{w}})$ is bounded by $1$, we only need to focus on $\sup_{s,t \in [0,T]} |h(Z_{t}, U_{s \wedge \tau_{w}})|$. Note that $|\Psi_{d,\rho}(u)| \leq d+ \rho$ and $|\Phi_{c,b}(u)| \leq c+ b$, for $u \in [0,1]$. Recall the following two inequalities: 
\[
zu^{z-1} \leq \frac{z}{(1-z)\ln u}\qquad \textrm{and} \qquad z(z-1)u^{z-1} \leq \frac{z^2}{(z-1)^2(\ln u)^{2}}, \qquad \textrm{for } z \geq 2, u \in [0,1].
\]
 Then, for $z \in \mathbb{N}_{0}$, $u \in [0, w]$ and $w \in (0,1)$, there exists a constant $C_{w} >0$ (depending only on $w$) such that
\begin{align*}
|h(z, u)| \leq |\Psi_{d,\rho}(u)| \mathbf{1}_{\{ z =1\}} + |\Psi_{d, \rho}(u)| z  u^{z-1} \mathbf{1}_{\{ z \geq 2\}} + |\Phi_{c,b}(u)| z (z-1) u^{z-1} \mathbf{1}_{\{ z \geq 2\}} \leq C_{w}.
\end{align*}
\noindent Since for all $s \geq 0$, $U_{s \wedge \tau_{w}} \leq w$,  $\mathbb{Q}_{u}$-a.s., for $u \in [0,w]$, the previous inequality implies that the random variable $\sup_{s,t \in [0,T]} |h(Z_{t}, U_{s \wedge \tau_{w}})|$ is integrable. 
\end{proof}

For the remainder of this section, we always assume that (\ref{mainE1}) is satisfied. We then study the behaviour of the dual process $U$. Recall that  $\tau_{w} = \inf \{t \geq 0: U_{t} = w \}$, for $w \in [0,1]$. 

\begin{lemma}  \label{lemma7}
Fix an arbitrary $\theta \in (0,1)$ and $0 < \mathfrak{a} < \mathfrak{b} < 1$. For $u \in [\mathfrak{a},\mathfrak{b}]$, 
\begin{itemize}
\item[(i)] the stopping time $\tau_{\mathfrak{a}} \wedge \tau_{\mathfrak{b}}$ is $\mathbb{Q}_{u}$-a.s.\ finite, and

\item[(ii)] $\mathbb{Q}_{u}(\tau_{\mathfrak{a}} > \tau_{\mathfrak{b}}) = \frac{\mathcal{S}_{d, \rho}^{c, b}(\theta; u) - \mathcal{S}_{d, \rho}^{c, b}(\theta; \mathfrak{a})}{\mathcal{S}_{d, \rho}^{c, b}(\theta; \mathfrak{b})-\mathcal{S}_{d, \rho}^{c, b}(\theta; \mathfrak{a})}$. 
\end{itemize}
\end{lemma}

The function $\mathcal{S}_{d, \rho}^{c, b}$ is well-defined, that is to say $|\mathcal{S}_{d, \rho}^{c, b}(\theta; u)| < \infty$, for all $u \in (0,1)$. Moreover, the quantity $\mathbb{Q}_{u}(\tau_{\mathfrak{a}} > \tau_{\mathfrak{b}}) $ does not depend upon $\theta$.
 
\begin{proof}[Proof of Lemma  \ref{lemma7}]
For $u \in [0,1]$, define the functions
\begin{align*}
f(u) \coloneqq   - \frac{\Psi_{d, \rho}(u)}{u\Phi_{c,b}(u)} \mathbf{1}_{\{u \in [\mathfrak{a},\mathfrak{b}] \}} - \frac{\Psi_{d, \rho}(\mathfrak{a})}{\mathfrak{a}\Phi_{c,b}(\mathfrak{a})} \mathbf{1}_{\{u \in [0,\mathfrak{a}) \}} - \frac{\Psi_{d, \rho}(\mathfrak{b})}{\mathfrak{b}\Phi_{c,b}(\mathfrak{b})} \mathbf{1}_{\{u \in (\mathfrak{b},1] \}},
\end{align*}
\begin{align*}
h(u)  \coloneqq  \frac{1}{u\Phi_{c,b}(u)} \mathbf{1}_{\{u \in [\mathfrak{a},\mathfrak{b}]\}} + \frac{1}{\mathfrak{a}\Phi_{c,b}(\mathfrak{a})} \mathbf{1}_{\{u \in [0,\mathfrak{a}) \}} + \frac{1}{\mathfrak{b}\Phi_{c,b}(\mathfrak{b})} \mathbf{1}_{\{u \in (\mathfrak{b},1] \}},
\end{align*}
\begin{align*}
S(u) \coloneqq \int_{\theta}^{u} \exp \left( \int_{\theta}^{x} f(w) \text{d}w \right) \text{d} x,
\end{align*}
\noindent and
\begin{align*}
T(u) & \coloneqq \frac{S(u)-S(\mathfrak{a})}{S(\mathfrak{b})-S(\mathfrak{a})}  \int_{u}^{\mathfrak{b}} \Big(S(\mathfrak{b})- S(x)\Big) h(x) \exp \left( - \int_{\theta}^{x} f(w) \text{d}w \right) \text{d}x \\
& \hspace*{5mm} + \frac{S(\mathfrak{b})-S(u)}{S(\mathfrak{b})-S(\mathfrak{a})}  \int_{\mathfrak{a}}^{u} (S(x)- S(\mathfrak{a})) h(x) \exp \left( - \int_{\theta}^{x} f(w) \text{d}w \right) \text{d}x.
\end{align*}

\noindent Note that $S(\cdot) \in \mathscr{D}_{0}(\mathscr{L}^{{\rm dual}} ) \cap \mathscr{D}_{1}(\mathscr{L}^{{\rm dual}} )$ and $T(\cdot) \in \mathscr{D}_{0}(\mathscr{L}^{{\rm dual}} ) \cap \mathscr{D}_{1}(\mathscr{L}^{{\rm dual}} )$. In particular, 
\begin{align}  \label{eq24}
\mathscr{L}^{{\rm dual}} S(u) = 0 \hspace*{4mm} \text{and} \hspace*{4mm} \mathscr{L}^{{\rm dual}} T(u) = -1, \hspace*{4mm} \text{for} \hspace*{2mm} u \in [\mathfrak{a}, \mathfrak{b}]. 
\end{align}

By Proposition 4.1.7 in \cite{Et1986}, we know that the process $( T(U_{t})- \int_{0}^{t}  \mathscr{L}^{{\rm dual}} T(U_{s}) {\rm d} s,  t \geq 0)$ is a martingale. Therefore, the optional stopping theorem and (\ref{eq24}) imply that
\begin{eqnarray*}
\mathbb{E}_{\mathbb{Q}_{u}} \left[ t \wedge \tau_{\mathfrak{a}} \wedge \tau_{\mathfrak{b}} \right] = -\mathbb{E}_{\mathbb{Q}_{u}} \left[ \int_{0}^{t \wedge \tau_{\mathfrak{a}} \wedge \tau_{\mathfrak{b}}} \mathscr{L}^{{\rm dual}} T(U_{s}) {\rm d} s \right] = T(u) - \mathbb{E}_{\mathbb{Q}_{u}} \left[ T(U_{t \wedge \tau_{\mathfrak{a}} \wedge \tau_{\mathfrak{b}}}) \right],
\end{eqnarray*}
for $u \in [\mathfrak{a},\mathfrak{b}]$ and $t \geq 0$. By letting $t \rightarrow \infty$, we see that $\mathbb{E}_{\mathbb{Q}_{u}} \left[ \tau_{\mathfrak{a}} \wedge \tau_{\mathfrak{b}}  \right] \leq T(u) < \infty$, i.e. $\tau_{\mathfrak{a}} \wedge \tau_{\mathfrak{b}} < \infty$ a.s.\ which shows (i). 

Again it follows from Proposition 4.1.7 in \cite{Et1986} that the process $(S(U_{t})-\int_{0}^{t}  \mathscr{L}^{{\rm dual}} S(U_{s}) {\rm d} s,  t \geq 0)$ is a martingale. Then, by the optional stopping theorem and (\ref{eq24}), we conclude that 
\begin{eqnarray*}
S(u) = \mathbb{E}_{\mathbb{Q}_{u}} \left[ S(U_{\tau_{\mathfrak{a}} \wedge \tau_{\mathfrak{b}} \wedge t}) \right] - \mathbb{E}_{\mathbb{Q}_{u}} \left[ \int_{0}^{\tau_{\mathfrak{a}} \wedge \tau_{\mathfrak{b}}\wedge t} \mathscr{L}^{{\rm dual}} S(U_{s}) {\rm d} s \right]  = \mathbb{E}_{\mathbb{Q}_{u}} \left[ S(U_{\tau_{\mathfrak{a}} \wedge \tau_{\mathfrak{b}} \wedge t}) \right],
\end{eqnarray*}
for $u \in [\mathfrak{a},\mathfrak{b}]$ and $t \geq 0$. By letting $t \rightarrow \infty$, the dominated convergence theorem and (i) imply that 
\begin{eqnarray*}
S(u) =  \mathbb{E}_{\mathbb{Q}_{u}} \left[ S(U_{\tau_{\mathfrak{a}} \wedge \tau_{\mathfrak{b}}}) \right] =  \mathbb{E}_{\mathbb{Q}_{u}} \left[ S(U_{\tau_{\mathfrak{a}} \wedge \tau_{\mathfrak{b}}}) \mathbf{1}_{\{ \tau_{\mathfrak{a}} < \tau_{\mathfrak{b}} \}}  + S(U_{\tau_{\mathfrak{a}} \wedge \tau_{\mathfrak{b}}}) \mathbf{1}_{\{ \tau_{\mathfrak{a}} > \tau_{\mathfrak{b}} \}} \right], 
\end{eqnarray*}

\noindent for $u \in [\mathfrak{a},\mathfrak{b}]$. In other words, (ii) follows by noticing that $S(u) = \mathcal{S}_{d,\rho}^{c, b}(\theta,u)$, for $u \in [\mathfrak{a}, \mathfrak{b}]$.
\end{proof}

Next, we verify that the dual process $U$ is regular in $(0,1)$, i.e., there is a positive probability that $U$ reach any point in $(0,1)$ from any starting point in $(0,1)$. 

\begin{corollary} \label{corollary2}
Suppose that $u \in (0,1)$. Then, for any $w \in (0,1)$, $\mathbb{Q}_{u}(\tau_{w} < \infty)>0$.
\end{corollary}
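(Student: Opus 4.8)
The plan is to derive this regularity property directly from the two-sided exit estimates in Lemma \ref{lemma7}, the one extra ingredient being the strict monotonicity of the scale function $\mathcal{S}_{d,\rho}^{c,b}(\theta;\cdot)$. First I would note that, by definition, for every $u\in(0,1)$
\[
\frac{{\rm d}}{{\rm d}u}\mathcal{S}_{d,\rho}^{c,b}(\theta;u)=\frac{1}{\mathcal{J}_{d,\rho}^{c,b}(\theta;u)}=\exp\big(-\mathcal{Q}_{d,\rho}^{c,b}(\theta;u)\big)>0,
\]
so $\mathcal{S}_{d,\rho}^{c,b}(\theta;\cdot)$ is strictly increasing on $(0,1)$. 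If $w=u$ the claim is trivial, since then $\tau_w=0$, $\mathbb{Q}_u$-a.s.; hence from now on I assume $u\neq w$.

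Suppose first that $u<w$. I would fix any $\mathfrak{a}\in(0,u)$, set $\mathfrak{b}:=w$ and pick $\theta\in(\mathfrak{a},\mathfrak{b})$, so that $u\in(\mathfrak{a},\mathfrak{b})$. Lemma \ref{lemma7}(ii) then gives
\[
\mathbb{Q}_u(\tau_{\mathfrak{a}}>\tau_w)=\frac{\mathcal{S}_{d,\rho}^{c,b}(\theta;u)-\mathcal{S}_{d,\rho}^{c,b}(\theta;\mathfrak{a})}{\mathcal{S}_{d,\rho}^{c,b}(\theta;w)-\mathcal{S}_{d,\rho}^{c,b}(\theta;\mathfrak{a})},
\]
and since $\mathfrak{a}<u<w$ the strict monotonicity forces both numerator and denominator to be strictly positive, whence $\mathbb{Q}_u(\tau_{\mathfrak{a}}>\tau_w)>0$. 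For the case $u>w$ I would instead take $\mathfrak{a}:=w$ and $\mathfrak{b}\in(u,1)$, so that $w<u<\mathfrak{b}$; the same identity together with $\mathcal{S}_{d,\rho}^{c,b}(\theta;u)<\mathcal{S}_{d,\rho}^{c,b}(\theta;\mathfrak{b})$ gives $\mathbb{Q}_u(\tau_w>\tau_{\mathfrak{b}})=\mathbb{Q}_u(\tau_{\mathfrak{a}}>\tau_{\mathfrak{b}})<1$, and since the continuity of $U$ rules out $\{\tau_w=\tau_{\mathfrak{b}}\}$, this yields $\mathbb{Q}_u(\tau_w<\tau_{\mathfrak{b}})>0$.

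It remains to argue that, on the favourable event, $w$ is actually reached in finite time. By Lemma \ref{lemma7}(i) we have $\tau_{\mathfrak{a}}\wedge\tau_{\mathfrak{b}}<\infty$ $\mathbb{Q}_u$-a.s., so on $\{\tau_{\mathfrak{a}}>\tau_w\}$ (resp.\ $\{\tau_w<\tau_{\mathfrak{b}}\}$) the smaller of the two hitting times is finite and coincides with $\tau_w$; hence $\tau_w<\infty$ on that event. Consequently $\mathbb{Q}_u(\tau_w<\infty)\geq\mathbb{Q}_u(\tau_{\mathfrak{a}}>\tau_w)>0$ in the first case and $\mathbb{Q}_u(\tau_w<\infty)\geq\mathbb{Q}_u(\tau_w<\tau_{\mathfrak{b}})>0$ in the second, which is the assertion. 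There is no serious obstacle in this argument; the only point deserving care is precisely this last step, namely ensuring that the event of hitting $w$ before the opposite endpoint really produces a finite value of $\tau_w$, which is exactly where the almost-sure finiteness of $\tau_{\mathfrak{a}}\wedge\tau_{\mathfrak{b}}$ from Lemma \ref{lemma7}(i) enters.
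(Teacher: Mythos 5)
Your argument is correct and rests on exactly the same ingredients as the paper's proof, namely the two-sided exit probabilities of Lemma \ref{lemma7}(ii) combined with the a.s.\ finiteness of $\tau_{\mathfrak{a}}\wedge\tau_{\mathfrak{b}}$ from part (i) and the strict monotonicity of the scale function. The only (immaterial) difference is that you take $w$ as an endpoint of the exit interval, whereas the paper places $w$ strictly between $u$ and the far endpoint and uses path continuity to conclude that hitting the far endpoint forces a prior passage through $w$.
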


\begin{proof}
 First, we assume  that $0<u < w<1$ and observe that  we can always find $\mathfrak{a} \in (0,u)$ and $\mathfrak{b} \in (w,1)$. Thus $\tau_{w} < \tau_{\mathfrak{b}}$, under $\mathbb{Q}_{u}$,  and Lemma \ref{lemma7} (ii) implies that $\mathbb{Q}_{u}(\tau_{w} < \infty) > \mathbb{Q}_{u}(\tau_{\mathfrak{b}} < \infty) > \mathbb{Q}_{u}(\tau_{\mathfrak{b}} < \tau_{\mathfrak{a}}) >0$.
\end{proof}

The function $\mathcal{S}_{d,\rho}^{c, b}(\theta; u)$, for $u \in (0,1)$, is the so-called scale function of $U$. In particular, it is continuous and strictly increasing; see e.g., \cite[Chapter 14, Section 6, page 226-227]{Taylor1981}, \cite[Proposition VII.3.2]{Revuz1999} or \cite[Theorem 23.7]{Kalla2002}. 

Let us  study the long term behaviour of $U$ under the assumption that $d=0$. The next result generalises  Corollaries 1 and  2 in \cite{Pa2017} and Corollary 1 in \cite{PaMi2019}. Recall  that $|\mathcal{S}_{0, \rho}^{c,b}(\theta; u)| < \infty$, for all $u \in [0,1)$ (see the comments before Theorem \ref{lemma6} for an explanation of this fact). 

\begin{proposition} \label{lemma5}
Suppose  $u \in (0,1)$ and that $d=0$. Then, there exists a $\{0,1\}$-valued random variable $U_{\infty}$ such that $\lim_{t \rightarrow \infty} U_{t} = U_{\infty}$ a.s.  More precisely,
\begin{itemize}
\item[(i)] if $\mathcal{S}_{0,\rho}^{c,b}(\theta; 1)  < \infty$, for some $\theta \in (0,1)$, then  
\begin{align*}
\mathbb{Q}_{u}(U_{\infty} = 1) = 1- \mathbb{Q}_{u}(U_{\infty} = 0) = \frac{\mathcal{S}_{0,\rho}^{c,b}(0; u)} {\mathcal{S}_{0,\rho}^{c,b}(0; 1)}.
\end{align*}
\item[(ii)] Otherwise, $\mathbb{Q}_{u} \left( U_{\infty} = 0 \right) = 1$. 
\end{itemize}
\end{proposition}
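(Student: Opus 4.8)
The plan is to first establish the almost-sure convergence of $U$ to a $\{0,1\}$-valued limit, and then to identify the absorption probabilities by passing to the limit in the two-sided exit formula of Lemma \ref{lemma7} (ii). Throughout I write $\mathcal{S}(\cdot) := \mathcal{S}_{0,\rho}^{c,b}(\theta;\cdot)$ for the scale function of $U$, whose derivative is $\mathcal{S}^{\prime}(u) = \mathcal{J}_{0,\rho}^{c,b}(\theta;u)^{-1} = \exp(-\mathcal{Q}_{0,\rho}^{c,b}(\theta;u))$, so that a direct computation gives $\mathscr{L}^{{\rm dual}}\mathcal{S} \equiv 0$ on $(0,1)$. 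Since $d = 0$, Lemma \ref{lemma14} (ii) tells us that $0$ is an exit (hence absorbing) boundary, while under assumption \eqref{mainE1}, Lemma \ref{lemma15} together with Remark \ref{remark9} guarantee that $1$ is either an exit or a natural boundary; in particular neither boundary is regular. Recall also that $|\mathcal{S}(0)| < \infty$ because $\Psi_{0,\rho} \le 0$ forces $\mathcal{S}^{\prime}$ to be bounded near $0$.

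For the convergence step, I would use that $Y_{t} := \mathcal{S}(U_{t})$ is a continuous local martingale on the lifetime of $U$ in $(0,1)$: localising along the stopping times $\tau_{\mathfrak{a}_{n}}\wedge\tau_{\mathfrak{b}_{n}}$ with $\mathfrak{a}_{n}\downarrow 0$, $\mathfrak{b}_{n}\uparrow 1$, each stopped process $\mathcal{S}(U_{\cdot\wedge\tau_{\mathfrak{a}_{n}}\wedge\tau_{\mathfrak{b}_{n}}})$ is a bounded martingale by Lemma \ref{lemma7} (i) and the identity $\mathscr{L}^{{\rm dual}}\mathcal{S}=0$. Because $\mathcal{S}(0) > -\infty$, the process $Y_{t} - \mathcal{S}(0) \ge 0$ is a nonnegative continuous local martingale, hence a supermartingale, and therefore converges almost surely to a finite limit; consequently $U_{t} = \mathcal{S}^{-1}(Y_{t})$ converges almost surely to some $U_{\infty} \in [0,1]$, where $\mathcal{S}^{-1}$ is extended by $\mathcal{S}^{-1}(\infty)=1$ when $\mathcal{S}(1)=\infty$. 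To see that $U_{\infty}$ cannot take an interior value, I would invoke the Dambis--Dubins--Schwarz representation $Y = Y_{0} + B_{\langle Y\rangle}$: convergence of $Y$ forces $\langle Y\rangle_{\infty} < \infty$ a.s., while $\langle Y\rangle_{t} = \int_{0}^{t} \mathcal{S}^{\prime}(U_{s})^{2}\, 2U_{s}\Phi_{c,b}(U_{s})\,{\rm d}s$ has an integrand bounded away from $0$ on every compact subinterval of $(0,1)$; hence $U$ spends only finite time in any such subinterval and $U_{\infty} \in \{0,1\}$ a.s. This interior-exclusion is the step I expect to be the most delicate to phrase cleanly, since it is exactly where the nondegeneracy of the diffusion inside $(0,1)$ --- equivalently the regularity of Corollary \ref{corollary2} --- is used.

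It remains to compute $\mathbb{Q}_{u}(U_{\infty} = 1)$. Fixing $\mathfrak{a} \in (0,u)$ and $\mathfrak{b}\in(u,1)$, Lemma \ref{lemma7} (ii) gives
\begin{eqnarray*}
\mathbb{Q}_{u}(\tau_{\mathfrak{a}} > \tau_{\mathfrak{b}}) = \frac{\mathcal{S}(u)-\mathcal{S}(\mathfrak{a})}{\mathcal{S}(\mathfrak{b})-\mathcal{S}(\mathfrak{a})}.
\end{eqnarray*}
Letting $\mathfrak{a} \downarrow 0$ and $\mathfrak{b} \uparrow 1$ along sequences, I would argue that $\mathbf{1}_{\{\tau_{\mathfrak{a}} > \tau_{\mathfrak{b}}\}} \to \mathbf{1}_{\{U_{\infty} = 1\}}$ almost surely: on $\{U_{\infty} = 1\}$ the path stays bounded away from $0$ (as $0$ is absorbing, $U$ never reaches it), so $\tau_{\mathfrak{a}} = \infty$ for $\mathfrak{a}$ small while $\tau_{\mathfrak{b}}<\infty$; on $\{U_{\infty} = 0\}$ the path stays bounded away from $1$, so $\tau_{\mathfrak{b}}=\infty$ while $\tau_{\mathfrak{a}}<\infty$. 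Bounded convergence together with $\mathcal{S}(u)-\mathcal{S}(\mathfrak{a}) \to \mathcal{S}_{0,\rho}^{c,b}(0;u)$ and $\mathcal{S}(\mathfrak{b})-\mathcal{S}(\mathfrak{a})\to \mathcal{S}_{0,\rho}^{c,b}(0;1)$ then yield
\begin{eqnarray*}
\mathbb{Q}_{u}(U_{\infty} = 1) = \frac{\mathcal{S}_{0,\rho}^{c,b}(0;u)}{\mathcal{S}_{0,\rho}^{c,b}(0;1)}.
\end{eqnarray*}
This single formula covers both cases: when $\mathcal{S}_{0,\rho}^{c,b}(\theta;1)<\infty$ it is statement (i), and when $\mathcal{S}_{0,\rho}^{c,b}(\theta;1)=\infty$ the denominator is infinite, giving $\mathbb{Q}_{u}(U_{\infty}=1)=0$ and hence statement (ii). Finally, when $\rho = 0$ one has $\Psi_{0,\rho}\equiv 0$, so $\mathcal{Q}_{0,\rho}^{c,b}(\theta;\cdot)\equiv 0$, $\mathcal{S}^{\prime}\equiv 1$ and the scale function is affine; then $\mathcal{S}_{0,\rho}^{c,b}(0;u)=u$ and $\mathcal{S}_{0,\rho}^{c,b}(0;1)=1$, which recovers $\mathbb{Q}_{u}(U_{\infty}=1)=u$.
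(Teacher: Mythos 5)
Your proof is correct, and it reorganises the argument in a way that differs from the paper's in two respects. The paper splits into two cases according to whether $\mathcal{S}_{0,\rho}^{c,b}(\theta;1)$ is finite: when it is infinite, the paper does not prove convergence directly but instead invokes the Feller-test machinery (Lemmas \ref{corollary3} and \ref{lemma16}) to rule out attraction to $1$ and then shows $\mathbb{Q}_u(\tau_0<\infty)=1$ via the exit formula; when it is finite, it uses exactly your bounded-martingale/Dambis--Dubins--Schwarz argument for convergence, but excludes interior limits via Lemma \ref{lemma7}~(i) and the Markov property rather than via your finite-occupation-time argument, and then obtains the absorption probabilities by iterated limits in the exit formula. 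You instead exploit the one-sided bound $\mathcal{S}(0)>-\infty$ to treat $\mathcal{S}(U)$ as a nonnegative supermartingale uniformly in both cases, which buys a single convergence proof, a single interior-exclusion argument, and a single passage to the limit in Lemma \ref{lemma7}~(ii) whose right-hand side degenerates to $0$ automatically when $\mathcal{S}_{0,\rho}^{c,b}(0;1)=\infty$; the price is that you bypass the boundary-classification lemmas the paper reuses elsewhere. Two points deserve a cleaner write-up: (a) $\mathcal{S}(U)$ is a local martingale only on the stochastic interval $[0,\tau_0\wedge\tau_1)$, since the localising times $\tau_{\mathfrak{a}_n}\wedge\tau_{\mathfrak{b}_n}$ increase to $\tau_0\wedge\tau_1$ and not to $\infty$; the supermartingale property of the stopped process should be obtained by Fatou's lemma from the bounded martingales $\mathcal{S}(U_{\cdot\wedge\tau_{\mathfrak{a}_n}\wedge\tau_{\mathfrak{b}_n}})$, after which convergence on $\{\tau_0<\infty\}$ is trivial because $0$ is absorbing; (b) in the limit $\mathbf{1}_{\{\tau_{\mathfrak{a}_n}>\tau_{\mathfrak{b}_n}\}}\to\mathbf{1}_{\{U_\infty=1\}}$ you should justify $\sup_t U_t<1$ on $\{U_\infty=0\}$ by distinguishing whether $1$ is exit or natural (in either case $U$ cannot touch $1$ without being absorbed there or at all), which is where Lemma \ref{lemma15} and Remark \ref{remark9} enter. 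Neither point is a gap, only a matter of phrasing.
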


\begin{proof} 
It is well-known that the process $(\mathcal{S}_{0, \rho}^{c,b}(\theta; U_{t}), t \geq 0)$ is a regular diffusion on a natural scale; see e.g., \cite[Proposition VII.3.4]{Revuz1999}. Then, our claim follows from \cite[Theorem 23.15]{Kalla2002}.
\end{proof}

Finally, we study the case $d >0$.  

\begin{proposition} \label{Pro7}
Suppose that $u \in (0,1)$ and that $d>0$.
\begin{itemize}
\item[(i)] If $\mathcal{S}_{d,\rho}^{c,b}(\theta; 1) < \infty$, for some $\theta \in (0,1)$, then $\lim_{t \rightarrow \infty} U_{t} = 1$ almost surely. 

\item[(ii)] Otherwise, $U$ is null-recurrent.
\end{itemize}
\end{proposition}

\begin{proof}
Recall that $(\mathcal{S}_{d, \rho}^{c,b}(\theta; U_{t}), t \geq 0)$ is a regular diffusion on a natural scale; see e.g., \cite[Proposition VII.3.4]{Revuz1999}. Then, our claim follows again from \cite[Theorem 23.15]{Kalla2002}. We only need to be careful when $\mathcal{S}_{d,\rho}^{c,b}(\theta; 1) = \infty$, for some $\theta \in (0,1)$, that is, part (ii). In this case, we have that $\mathcal{I}_{d,\rho}^{c,b}(\theta; 1) = \infty$. To see this, note that 
\begin{align*}
\mathcal{I}_{d, \rho}^{c, b}(\theta; x) =  \int_{\theta}^{x} (\mathcal{S}_{d, \rho}^{c, b}(\theta; x)-\mathcal{S}_{d, \rho}^{c, b}(\theta; u)) \mathcal{R}_{d, \rho}^{c, b}(\theta; {\rm d} u),\qquad \textrm{ for} \quad x \in [\theta, 1], 
\end{align*} 
\noindent where $\mathcal{R}_{d, \rho}^{c, b}(\theta; {\rm d} u)$ has  a strictly positive density with respect to the Lebesgue measure  on $(\theta,1)$. Therefore the fact that $\mathcal{I}_{d,\rho}^{c,b}(\theta; 1) = \infty$ implies  that $\varsigma =0$ and $1$ is a natural boundary by Lemma \ref{lemma15} and Remark \ref{remark9}.

On the other hand, by our assumption (\ref{mainE1}), we always have that the so-called speed measure (see \eqref{SpeedM} for a definition) of $U$ (or equivalently of $(\mathcal{S}_{d, \rho}^{c,b}(\theta; U_{t}), t \geq 0)$) is unbounded. Therefore, \cite[Theorem 23.15]{Kalla2002} implies that $U$ is null-recurrent whenever $\mathcal{S}_{d,\rho}^{c,b}(\theta; 1) = \infty$. 
\end{proof}

\section{Extinction and stationarity: Proof of Theorem \ref{lemma4}, Theorem \ref{lemma6}, Lemma \ref{lemma11} and Theorem \ref{Lamtype}} \label{sec4}

The aim of this section is to prove Theorems \ref{lemma4} and \ref{lemma6} that are devoted to the event of extinction and the existence of a stationary distribution for (sub)critical cooperative BPI-processes. The proofs are based on the duality relationship presented in Section \ref{sec3}.  We also provide some details of the proofs of Lemma \ref{lemma11} and Theorem \ref{Lamtype} which are  natural extensions of the results of Lambert \cite{La2005}. 

Recall that $U = (U_{t}, t  \geq 0)$ is the dual process of the BPI-process defined in Theorem \ref{Theo2}. 
\begin{proof}[Proof of Theorem \ref{lemma4}]
The duality relationship (\ref{eq11}) in Theorem  \ref{Theo2} and the Dominated Convergence Theorem imply that
\begin{eqnarray} \label{eq37}
\lim_{t \rightarrow \infty} \mathbb{E}_{z}[u^{Z_{t}}] =  \mathbb{E}_{z}[u^{\lim_{t \rightarrow \infty} Z_{t}}] = \mathbb{E}_{\mathbb{Q}_{u}}\left[ \left(\lim_{t \rightarrow \infty} U_{t} \right)^{z} \right],  \hspace*{5mm} \text{for} \hspace*{3mm} z \in \mathbb{N} \hspace*{3mm} \text{and} \hspace*{3mm} u \in [0,1].
\end{eqnarray}
\noindent Since $d >0$, $\mathcal{S}_{d,\rho}^{c,b}(\theta; 1) < \infty$, for some $\theta \in (0,1)$, and (\ref{mainE1}) is satisfied, we conclude from Proposition \ref{Pro7} (i) that $\mathbb{P}_{z} \left(\lim_{t \rightarrow \infty} Z_{t} = 0 \right)= 1$. Moreover, $\mathbb{P}_{z}(\lim_{t \rightarrow \infty} Z_{t} = 0 ) = \mathbb{P}_{z}(\zeta_{0} < \infty)$ which implies Theorem \ref{lemma4} (i). Theorem \ref{lemma4} (ii) follows from (\ref{eq37}) and Proposition \ref{Pro7} (ii). 
\end{proof}

Next, we prove Theorem \ref{lemma6} which deals with the stationarity of the BPI-process.
\begin{proof}[Proof of Theorem \ref{lemma6}]
Since $d=0$, our claim follows from \eqref{eq37} and Proposition \ref{lemma5}.
\end{proof}

Finally, we provide some details of the proofs of Lemma \ref{lemma11} and Theorem \ref{Lamtype} just to convince the reader that everything can be carried out as in \cite{La2005}.

\begin{proof}[Proof of Lemma \ref{lemma11}]
We start by proving (i). The fact that the function $H_{q,z} \in C^{2}([0,1), \mathbb{R})$ follows easily by the Dominated Convergence Theorem. Moreover, the function $f(u) \coloneqq u^{z}$, for $z \in \mathbb{N}_{0}$ and $u \in [0,1]$, belongs to $\mathscr{D}(\mathscr{L}^{{\rm BPI}})$. Then the forward Kolmogorov equation implies that $q H_{q,z}(u) = u^{z} + \int_{0}^{\infty} e^{-qt} \mathbb{E}_{z}[\mathscr{L}^{{\rm BPI}} f(Z_{t})] {\rm d} t$, where $\mathscr{L}^{{\rm BPI}}$ denotes the generator of the BPI-process defined in (\ref{eq8}). Therefore, the second claim in (i) follows from (\ref{eq13}).

Now, we prove (ii). Since $f_{q}$ solves the homogeneous equation $(E_{q}^{h})$ associated with (\ref{eq1}), we have that $- u \Phi_{c,b}(u) f_{q}^{\prime \prime}(u) - \Psi_{d,\rho}(u)f_{q}^{\prime}(u) + q f_{q}(u) = 0$, for $u \in I$. Note from the definition of $g_{q}$ in (ii) that $f^{\prime \prime}_{q}(u) / f_{q}(u) = g_{q}^{\prime}(u) + g_{q}^{2}(u)$, for $u \in I_{0}$. By taking $u \in I_{0}$ such that $f_{q}(u) \neq 0$, the claim in (ii) follows by dividing the previous differential equation by $-u \Phi_{c,b}(u) f_{q}(u)$. 

Next, we show (iii). From the definition of the function $h_{q}$ in (iii), we see that
\begin{eqnarray*}
h_{q}^{\prime}(u) = e^{-2m(\varphi(u))} \left( m^{\prime}(\varphi(u)) g_{q}(\varphi(u))- g^{\prime}_{q}(\varphi(u)) \right) = e^{-2m(\varphi(u))} \left(  g_{q}^{2}(\varphi(u))- \frac{q}{\varphi(u) \Phi_{c,b}(\varphi(u))} \right), 
\end{eqnarray*}

\noindent for $u \in \tau(J)$, where we have used that $g_{g}$ satisfies (\ref{eq3}) and $m^{\prime}(u) = - \Psi_{d,\rho}(u)/(u \Phi_{c,b}(u))$ to obtain the last equality. By recalling that $\varphi^{\prime}(u) = - e^{-m(\varphi(u))}$, we obtain that 
\begin{eqnarray*}
h_{q}^{\prime}(u) = h_{q}^{2}(u) - \frac{q \varphi^{\prime}(u)^{2}}{\varphi(u) \Phi_{c,b}(\varphi(u))}, 
\end{eqnarray*}

\noindent which implies the claim in (iii). 

Finally, (iv) follows from a simple adaptation of the argument of \cite[Proof of Lemma 2.1]{La2005}.
\end{proof}

\begin{proof}[Proof of Theorem \ref{Lamtype}]
The proof follows along the lines of  \cite[Theorem 2.3 and Theorem 3.9]{La2005}. We only need to note that $\mathcal{Q}_{d, \rho}^{c, b}(\theta; 1) \in (-\infty, \infty)$, for some $\theta \in (0,1)$, implies that $\mathcal{S}_{d, \rho}^{c, b}(\theta; 1) < \infty$ and therefore, $\mathbb{P}_{z}(\zeta_{0} < \infty) = 1$, for $z \in \mathbb{N}$, by Theorem \ref{lemma4} (i). 
\end{proof}

\section{Coming-down from infinity: Proof of Theorem \ref{lemma9}} \label{sec5}

In this section, we prove Theorem \ref{lemma9} which deals with  the property of coming-down from infinity. We also show how to construct a version of the (sub)critical cooperative BPI-process coming-down from infinity, i.e., a BPI-process with an entrance law from $\{ \infty\}$. Our approach is based on \cite[Lemmas 6.2 and  6.3]{Cle2017}. Let $U = (U_{t}, t  \geq 0)$ be the dual process of the (sub)critical cooperative BPI-process defined in Theorem \ref{Theo2}. 

\begin{lemma}
Suppose that $\mathcal{I}_{d, \rho}^{c, b}(\theta; 1) < \infty$, for some $\theta \in (0,1)$, and that (\ref{mainE1}) is satisfied. For any $t >0$, $u \mapsto \mathbb{Q}_{u}(U_{t} = 1)$, for $u \in [0,1]$, is the Laplace transform of certain probability measure $\eta_{t}$ over $\mathbb{N}_{0}$. Moreover, $\eta_{t} \rightarrow \eta_{0} \coloneqq \delta_{\{ \infty\}}$ weakly, as $t \rightarrow 0$. 
\end{lemma}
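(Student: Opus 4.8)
The plan is to identify $\eta_t$ as the (candidate) law of the BPI-process at time $t$ issued ``from infinity'', reading it off the dual process $U$ through the moment duality \eqref{eq11}. First I would fix $t>0$ and set $G_t(u):=\mathbb{Q}_u(U_t=1)$ for $u\in[0,1]$. Since $U$ takes values in $[0,1]$, the bounded variables $U_t^{z}$ decrease to $\mathbf{1}_{\{U_t=1\}}$ as $z\to\infty$, so dominated convergence together with \eqref{eq11} yields
\begin{equation*}
G_t(u)=\lim_{z\to\infty}\mathbb{E}_{\mathbb{Q}_u}\big[U_t^{z}\big]=\lim_{z\to\infty}\mathbb{E}_{z}\big[u^{Z_t}\big],\qquad u\in[0,1],
\end{equation*}
the limit being decreasing in $z$. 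Thus $G_t$ is a pointwise decreasing limit of the probability generating functions $f_z(u):=\mathbb{E}_z[u^{Z_t}]=\sum_{n\ge0}\mathbb{P}_z(Z_t=n)\,u^{n}$.

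Next I would promote $G_t$ to a generating function. Each $f_z$ extends to the complex disc $\{|\zeta|<1\}$ as an analytic function with nonnegative Taylor coefficients and $|f_z(\zeta)|\le\sum_{n}\mathbb{P}_z(Z_t=n)=1$ (the process is conservative under \eqref{mainE1} by Theorems \ref{Theo1} and \ref{Theo3}). Hence $\{f_z\}$ is a locally bounded, thus normal, family, and pointwise convergence on $[0,1)$ forces, by Vitali's theorem, locally uniform convergence to an analytic limit whose Taylor coefficients are $\eta_t(\{n\}):=\lim_{z\to\infty}\mathbb{P}_z(Z_t=n)\ge0$. Therefore $G_t(u)=\sum_{n\ge0}\eta_t(\{n\})\,u^{n}$ on $[0,1)$; that is, with $u=e^{-\lambda}$, the function $G_t$ is the Laplace transform of the nonnegative measure $\eta_t$ on $\mathbb{N}_0$, whose total mass equals $G_t(1^-)$ by Abel's theorem.

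The main obstacle is to show $G_t(1^-)=1$, i.e.\ that no mass escapes to $\infty$ at a fixed positive time. Under \eqref{mainE1} and $\mathcal{I}_{d,\rho}^{c,b}(\theta;1)<\infty$, Lemma \ref{lemma15} and Remark \ref{remark9} tell us that $1$ is an exit (hence accessible and absorbing) boundary, and \eqref{rem1.1} gives $\mathcal{S}_{d,\rho}^{c,b}(\theta;1)<\infty$. I would then use that $1$ is absorbing to write $1-G_t(u)\le\mathbb{Q}_u(\tau_1>t)$ and split $\{\tau_1>t\}\subseteq\{\tau_a<\tau_1\}\cup\{\tau_a\wedge\tau_1>t\}$ for a fixed small $a\in(0,1)$. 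Letting $\mathfrak{b}\uparrow1$ in Lemma \ref{lemma7} gives, on the one hand, $\mathbb{Q}_u(\tau_a<\tau_1)=\big(\mathcal{S}_{d,\rho}^{c,b}(\theta;1)-\mathcal{S}_{d,\rho}^{c,b}(\theta;u)\big)\big/\big(\mathcal{S}_{d,\rho}^{c,b}(\theta;1)-\mathcal{S}_{d,\rho}^{c,b}(\theta;a)\big)\to0$ as $u\to1^-$, and on the other hand $\mathbb{E}_{\mathbb{Q}_u}[\tau_a\wedge\tau_1]=T_{a,1}(u)\to0$; the latter follows since a Fubini computation identifies $\int_\theta^1\big(\mathcal{S}_{d,\rho}^{c,b}(\theta;1)-\mathcal{S}_{d,\rho}^{c,b}(\theta;x)\big)\big(x\Phi_{c,b}(x)\big)^{-1}\mathcal{J}_{d,\rho}^{c,b}(\theta;x)\,{\rm d}x=\mathcal{I}_{d,\rho}^{c,b}(\theta;1)<\infty$, so both terms defining $T_{a,1}(u)$ vanish as $u\to1$. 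Markov's inequality then gives $\mathbb{Q}_u(\tau_a\wedge\tau_1>t)\le T_{a,1}(u)/t\to0$, whence $\mathbb{Q}_u(\tau_1>t)\to0$, $G_t(1^-)=1$, and $\eta_t$ is a probability measure on $\mathbb{N}_0$.

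Finally, for the weak convergence $\eta_t\to\delta_{\{\infty\}}$ as $t\to0$ I would invoke path-continuity of $U$: for fixed $u\in[0,1)$ one has $\tau_1>0$ $\mathbb{Q}_u$-a.s., so $G_t(u)=\mathbb{Q}_u(U_t=1)\le\mathbb{Q}_u(\tau_1\le t)\to0$ as $t\downarrow0$. Since $u^{N}\,\eta_t(\{0,\dots,N\})\le\sum_{n\le N}\eta_t(\{n\})u^{n}\le G_t(u)$, this forces $\eta_t(\{0,\dots,N\})\to0$ for every $N$; as each $\eta_t$ is a probability measure, all the mass escapes to infinity, which is precisely $\eta_t\to\delta_{\{\infty\}}$ weakly on the one-point compactification $\mathbb{N}_{0,\infty}$. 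I expect the verification that $T_{a,1}(1^-)=0$ (the fixed-time control of the accessible absorbing boundary) to be the only genuinely delicate point, while the remaining steps are soft.
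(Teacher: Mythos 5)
Your proposal is correct and follows the same skeleton as the paper's proof: duality to identify $\mathbb{Q}_u(U_t=1)$ as $\lim_{z\to\infty}\mathbb{E}_z[u^{Z_t}]$, the exit-boundary classification of $1$ (Lemma \ref{lemma15} and Remark \ref{remark9}) to show no mass is lost at $u=1^-$, a continuity theorem to produce $\eta_t$, and path-continuity of $U$ for the limit $t\downarrow 0$. The differences are in how two sub-steps are justified. Where you invoke Vitali/normal families to extract the generating function, the paper simply applies L\'evy's continuity theorem; both work. More substantively, where you prove $\lim_{u\to 1}\mathbb{Q}_u(\tau_1\le t)=1$ by hand from Lemma \ref{lemma7} (letting $\mathfrak{b}\uparrow 1$, splitting $\{\tau_1>t\}$ and using Markov's inequality), the paper cites Karlin--Taylor, Chapter 15, Theorem 7.1, which gives $\lim_{u\to 1}\mathbb{E}_{\mathbb{Q}_u}[e^{-\lambda\tau_1}]=1$ at an exit boundary. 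Your route is more self-contained and makes visible exactly where $\mathcal{I}_{d,\rho}^{c,b}(\theta;1)<\infty$ enters; the paper's is shorter.

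One point in your argument deserves more care: the claim that \emph{both} terms of $T_{\mathfrak{a},1}(u)$ vanish as $u\to 1$. The first term is handled directly by your Fubini identity. But in the second term the prefactor $\mathcal{S}_{d,\rho}^{c,b}(\theta;1)-\mathcal{S}_{d,\rho}^{c,b}(\theta;u)$ tends to $0$ while the accompanying integral $\int_{\mathfrak{a}}^{u}\bigl(\mathcal{S}_{d,\rho}^{c,b}(\theta;x)-\mathcal{S}_{d,\rho}^{c,b}(\theta;\mathfrak{a})\bigr)\mathcal{R}_{d,\rho}^{c,b}(\theta;{\rm d}x)$ tends to $+\infty$ under assumption \eqref{mainE1} (since $\mathcal{E}_{d,\rho}^{c,b}(\theta;1)=\infty$, hence $\mathcal{R}_{d,\rho}^{c,b}(\theta;1)=\infty$ by Remark \ref{remark8}), so you face a $0\times\infty$ product. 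It does converge to $0$: bounding the decreasing factor inside the integral by its value at the integration variable gives
\[
\bigl(\mathcal{S}(\theta;1)-\mathcal{S}(\theta;u)\bigr)\int_{v}^{u}(\cdots)\,\mathcal{R}(\theta;{\rm d}x)\le C\int_{v}^{1}\bigl(\mathcal{S}(\theta;1)-\mathcal{S}(\theta;x)\bigr)\mathcal{R}(\theta;{\rm d}x),
\]
which is small for $v$ near $1$ because this last integral is exactly the tail of $\mathcal{I}_{d,\rho}^{c,b}(\theta;1)<\infty$, while the contribution from $[\mathfrak{a},v]$ is killed by the vanishing prefactor. This is the standard Feller-test computation, so the gap is minor, but as written your appeal to the Fubini identity alone does not cover this term.
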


\begin{proof}
The duality relationship (\ref{eq11}) in Theorem  \ref{Theo2} implies that
\begin{eqnarray*}
\lim_{z \rightarrow \infty} \mathbb{E}_{z}[u^{Z_{t}}]  = \lim_{z \rightarrow \infty} \mathbb{E}_{\mathbb{Q}_{u}}[U_{t}^{z}] = \mathbb{Q}_{u}(U_{t} = 1), \hspace*{4mm} \text{for} \hspace*{4mm} u \in [0,1] \hspace*{2mm} \text{and} \hspace*{2mm} t \geq 0. 
\end{eqnarray*}

\noindent Since $\mathcal{I}_{d, \rho}^{c, b}(\theta; 1) < \infty$, for some $\theta \in (0,1)$, and (\ref{mainE1}) is satisfied, Lemma \ref{lemma15} and Remark \ref{remark9} imply that $1$ is an exit boundary for the dual process $U$. Furthermore, \cite[Chapter 15, Theorem 7.1]{Taylor1981} shows that $\lim_{u \rightarrow 1} \mathbb{E}_{\mathbb{Q}_{u}}[e^{-\lambda \tau_{1}}] = 1$, for $\lambda >0$ and where $\tau_{1} = \inf \{ t \geq 0: U_{t} =1\}$. So, for $u$ close enough to $1$, we get for any $t >0$ that $\mathbb{Q}_{u} (U_{t} = 1) = \mathbb{Q}_{u} (\tau_{1} \leq t) >0$. Moreover, $\lim_{u \rightarrow 1} \mathbb{Q}_{u} (\tau_{1} \leq t) =1$.  Therefore,  L\'evy's continuity theorem (see e.g., \cite[Theorem 5.3]{Kalla2002}) implies that $u \mapsto \mathbb{Q}_{u}(U_{t} = 1)$ is the Laplace transform of certain finite measure $\eta_{t}$ on $\mathbb{N}_{0}$, which is the weak limit of the law of $Z_{t}$ under $\mathbb{P}_{z}$, as $z \rightarrow \infty$. Indeed, since $U$ has continuous paths, if $u \in [0,1)$, then $\lim_{t \rightarrow 0} \mathbb{Q}_{u}(U_{t} = 1)= 0$, and if $u =1$ then $\lim_{t \rightarrow 0} \mathbb{Q}_{1}(U_{t} = 1)= 1$. This entails that $\lim_{t \rightarrow 0} \eta_{t} = \delta_{\{ \infty \}}$ weakly. 
\end{proof}

\begin{lemma} \label{lemma20}
Suppose that $\mathcal{I}_{d, \rho}^{c, b}(\theta; 1) < \infty$, for some $\theta \in (0,1)$, and that (\ref{mainE1}) is satisfied. For any function $f \in B(\mathbb{N}_{0,\infty}, \mathbb{R})$ and any $t \geq 0$, set $P_{t}f(z) \coloneqq \mathbb{E}_{z}[f(Z_{t})]$, for $z \in \mathbb{N}_{0}$, and $P_{t}f(\infty) \coloneqq \sum_{z=0}^{\infty} f(z) \eta_{t}(\{z\})$. This defines a Feller semigroup $(P_{t}, t \geq 0)$ over $\mathbb{N}_{0,\infty}$. Furthermore, if $(Z_{t}, t \geq 0)$ is a c\`adl\`ag Markov process with semigroup $(P_{t}, t \geq 0)$, and $\zeta  \coloneqq \inf \{ t \geq 0: Z_{t} < \infty \}$, then $\mathbb{P}_{\infty}(\zeta = 0)=1$. 
\end{lemma}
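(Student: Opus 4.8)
The plan is to verify that $(P_t)_{t\ge 0}$ is a Feller semigroup on the compact space $\mathbb{N}_{0,\infty}$ (on which $C_0=C$) and then to read off the coming-down property from the fact that each $\eta_t$ charges only $\mathbb{N}_0$. The elementary properties come first: positivity of $P_t$ and the bound $\|P_tf\|_\infty\le\|f\|_\infty$ are immediate from $\mathbb{E}_z[\,\cdot\,]\ge0$ and $\eta_t(\mathbb{N}_0)=1$, and $P_t\mathbf 1=\mathbf 1$. That $P_tf\in C(\mathbb{N}_{0,\infty})$ reduces to continuity at $\infty$, i.e.\ $\lim_{z\to\infty}P_tf(z)=P_tf(\infty)$; since the preceding lemma identifies $\eta_t$ as the weak limit on $\mathbb{N}_{0,\infty}$ of the law of $Z_t$ under $\mathbb{P}_z$ as $z\to\infty$, this is precisely $\lim_{z\to\infty}\mathbb{E}_z[f(Z_t)]=\int f\,\mathrm d\eta_t$ for $f\in C(\mathbb{N}_{0,\infty})$. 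For the semigroup identity, Chapman--Kolmogorov for $Z$ gives $P_{s+t}=P_sP_t$ at every finite state; at $\infty$ I would write, using the Markov property and $P_tf\in C(\mathbb{N}_{0,\infty})$,
\begin{equation*}
P_{s+t}f(\infty)=\lim_{z\to\infty}\mathbb{E}_z\big[f(Z_{s+t})\big]=\lim_{z\to\infty}\mathbb{E}_z\big[(P_tf)(Z_s)\big]=\int (P_tf)\,\mathrm d\eta_s=P_s(P_tf)(\infty),
\end{equation*}
where the first and third equalities apply the weak convergence of the preceding lemma to the continuous functions $f$ and $P_tf$.

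The main obstacle is the strong continuity $\lim_{t\downarrow 0}\|P_tf-f\|_\infty=0$, whose difficulty is concentrated entirely near the state $\infty$: for a fixed finite $z$ the chain is well behaved, but the total jump rate out of $z$, namely $z(d+\rho+(b+c)(z-1))$, blows up as $z\to\infty$, so a naive pointwise bound is not uniform. Because $(P_t)$ is a contraction semigroup, it suffices to establish strong continuity on the dense algebra $\mathscr E$ of Remark \ref{remarkG} (via $\|P_tf-f\|_\infty\le 2\|f-g\|_\infty+\|P_tg-g\|_\infty$), hence by linearity on the functions $z\mapsto u^z$ for $u\in(0,1]$; the case $u=1$ is trivial. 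For $u\in(0,1)$ I would exploit the duality \eqref{eq11} of Theorem \ref{Theo2}, which gives $P_t(u^{\cdot})(z)=\mathbb{E}_{\mathbb{Q}_u}[U_t^z]$ for all $z\in\mathbb{N}_{0,\infty}$ with the conventions $u^\infty:=0$ and $U_t^\infty:=\mathbf 1_{\{U_t=1\}}$, together with the path-continuity of the dual diffusion $U$. Fixing $\delta\in(0,1-u)$ and splitting on $A_\delta:=\{\sup_{s\le t}|U_s-u|\le\delta\}$, on $A_\delta$ the mean value theorem yields $|U_t^z-u^z|\le z(u+\delta)^{z-1}\delta\le \delta\,M_\delta$ with $M_\delta:=\sup_{k\ge1}k(u+\delta)^{k-1}<\infty$ and $\delta M_\delta\to0$ as $\delta\to0$, while on $A_\delta^c$ one has $|U_t^z-u^z|\le 1$; hence
\begin{equation*}
\sup_{z\in\mathbb{N}_{0,\infty}}\big|P_t(u^{\cdot})(z)-u^z\big|\le \delta M_\delta+\mathbb{Q}_u\big(A_\delta^c\big).
\end{equation*}
Choosing $\delta$ small and then letting $t\downarrow0$, the path-continuity of $U$ forces $\mathbb{Q}_u(A_\delta^c)\to0$, giving the claim uniformly in $z$ (the state $z=\infty$ is absorbed into $A_\delta^c$ since $u+\delta<1$).

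With $(P_t)$ established as a Feller semigroup, Theorem 4.2.7 in \cite{Et1986} (as already invoked in the paper) provides a càdlàg strong Markov process realising it, and in particular the law $\mathbb{P}_\infty$ of the process issued from $\infty$, under which $Z_t$ has law $\eta_t$. Since each $\eta_t$ is a probability measure on $\mathbb{N}_0$ by the preceding lemma, $\mathbb{P}_\infty(Z_t<\infty)=\eta_t(\mathbb{N}_0)=1$ for every $t>0$; as $\{Z_t<\infty\}\subseteq\{\zeta\le t\}$ this yields $\mathbb{P}_\infty(\zeta\le t)=1$ for all $t>0$, and taking the decreasing intersection over $t=1/n$ gives $\mathbb{P}_\infty(\zeta=0)=\mathbb{P}_\infty\big(\bigcap_n\{\zeta\le 1/n\}\big)=1$. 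I expect the uniform strong-continuity estimate near $\infty$ to be the only genuinely delicate point; the remaining steps combine the duality, the preceding lemma, and standard Feller theory.
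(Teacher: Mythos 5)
Your argument is correct, and it is worth noting that the paper itself does not actually write out a proof of Lemma \ref{lemma20}: it simply asserts that the statement is the discrete-space counterpart of Lemma 6.3 in \cite{Cle2017} and that the same argument goes through. What you have done is supply that argument in full in the discrete setting, and the substance matches what the citation is implicitly relying on: the identification of $\eta_t$ as the weak limit of the laws of $Z_t$ under $\mathbb{P}_z$ on the compact space $\mathbb{N}_{0,\infty}$ gives continuity of $P_tf$ at $\infty$ and the semigroup identity there, while the coming-down statement is immediate from $\eta_t(\mathbb{N}_0)=1$ for $t>0$. The genuinely delicate point, which you correctly isolate, is the strong continuity of $(P_t)$ uniformly in the state variable, where the unbounded jump rates rule out a naive bound; your reduction to the dense algebra $\mathscr{E}$ of Remark \ref{remarkG} and the uniform-in-$z$ estimate $\sup_z|\mathbb{E}_{\mathbb{Q}_u}[U_t^z]-u^z|\le \delta M_\delta+\mathbb{Q}_u(\sup_{s\le t}|U_s-u|>\delta)$, obtained from the moment duality of Theorem \ref{Theo2} and the path continuity of the dual diffusion at its starting point, is exactly the right mechanism (and is the discrete analogue of what Foucart's proof does). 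Your version thus buys a self-contained verification where the paper offers only a pointer; the only implicit ingredient you should perhaps flag is that the Chapman--Kolmogorov step at finite states uses that $Z$ is conservative under \eqref{mainE1}, so that $Z_s<\infty$ almost surely and $(P_tf)(Z_s)$ is unambiguous.
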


\begin{proof}
This result is the discrete-space counterpart of \cite[Lemma 6.3]{Cle2017} and can be proved following exactly the same argument. 
\end{proof}

\begin{proof}[Proof of Theorem \ref{lemma9}]
It follows from Lemma \ref{lemma20} since we have that $\mathbb{P}_{\infty}(\zeta = 0) = \mathbb{P}_{\infty} ( \, \forall \, \, t >0, Z_{t} < \infty ) = 1$; see \cite[Definition 2.1 and Proposition 2.2]{Ban2016}.
\end{proof}

\appendix
\section{Appendix} \label{Apendice}
Here, we provide the proofs of the technical Corollary \ref{corollary4}, Lemmas \ref{lemma1Red}, \ref{lemma2Red}, \ref{lemma12}, \ref{lemma13}, \ref{Pro3}, \ref{lemma14} and \ref{lemma15}. 
 

\begin{proof}[Proof of Corollary \ref{corollary4}]
Suppose first that $\varsigma < 0$. Note (\ref{eq9}) implies that, for $\theta \in (0,1)$ and $u \in (\theta, 1)$,
\begin{align*}
\mathcal{Q}_{d, \rho}^{c,b}(\theta; u) = \int_{\theta}^{u} \frac{d(1-w)}{w \Phi_{c,b}(w)} {\rm d}w + \mathcal{Q}_{0, \rho}^{c,b}(\theta; u).
\end{align*}
\noindent It follows from (\ref{eq02}) that $\Phi_{c,b}(u) \geq - \varsigma (1-u)$, for $u \in [0,1]$. Then, for $\theta \in (0,1)$ and $u \in (\theta, 1)$,
\begin{align*}
\mathcal{Q}_{0, \rho}^{c,b}(\theta; u) \leq \mathcal{Q}_{d, \rho}^{c,b}(\theta; u) \leq - \frac{d (\theta -u)}{\varsigma \theta}  + \mathcal{Q}_{0, \rho}^{c,b}(\theta; u).
\end{align*}
\noindent The above implies that $\mathcal{Q}_{d, \rho}^{c,b}(\theta; 1)$ and $\mathcal{Q}_{0, \rho}^{c,b}(\theta; 1)$ have the same nature whenever $\varsigma < 0$. Then, Corollary \ref{corollary4} (i) follows from Lemma \ref{lemma13}, Theorem \ref{Theo1} and Theorem \ref{Theo3}. 

If $\varsigma = 0$, Corollary \ref{corollary4} (ii)  follows from Lemma \ref{lemma13} and Theorem \ref{Theo3}. 
\end{proof}

\begin{proof}[Proof of Lemma \ref{lemma12}]
Note that $\Psi_{d, \rho}(u) \leq (1-u)d$, for $u \in [0,1]$; see (\ref{eq9}). Then, (\ref{eq02}) implies that $\mathcal{Q}_{d, \rho}^{c,b}(\theta;x) \leq  - d(x-\theta) (\theta \varsigma)^{-1}$, for  $x \in (\theta, 1)$, and our claim follows.
\end{proof}

\begin{proof}[Proof of Lemma \ref{lemma13}]
Since $\mathcal{Q}_{d, \rho}^{c,b}(\theta;1) \in (-\infty, \infty)$, there exists a constant $C >0$ such that $\mathcal{J}_{d, \rho}^{c,b}(\theta;x) \geq C$, for $x \in [\theta, 1]$ (one could choose $\theta$ closer to $1$ if necessary). From \eqref{eq02}, we know that  $\Phi_{c,b}(u) \leq c(1-u)$, for $u \in [0,1]$. Then, there is a constant $C^{\prime} >0$ such that $\mathcal{R}_{d, \rho}^{c,b}(\theta; 1) \geq C^{\prime} \int_{\theta}^{1} (1-x)^{-1} {\rm d}x= \infty$. Moreover, since $\mathcal{E}_{d, \rho}^{c,b}(\theta; 1) = \int_{\theta}^{1} \mathcal{R}_{d, \rho}^{c,b}(u; 1) {\rm d} u$, it follows that $\mathcal{E}_{d, \rho}^{c,b}(\theta; 1) = \infty$.
\end{proof}

\begin{proof}[Proof of Lemma \ref{NEWlemma1}]
Let $\mathbf{m} \coloneqq - d + \sum_{i \geq 1} i \pi_{i}$. We can and will assume that $\mathbf{m} > 0$. To see this, note that \eqref{cebolla} and \eqref{eq02} imply that $|\mathcal{Q}_{d, \rho}^{c,b}(\theta; 1)| < \infty$ whenever $\mathbf{m} \leq 0$ and in particular, Lemma \ref{lemma13} implies that $\mathcal{R}_{d, \rho}^{c,b}(\theta; 1) = \infty$. We henceforth assume that  $\mathbf{m} > 0$. Note that, for $u \in (\theta, 1)$, $\mathcal{E}_{d, \rho}^{c,b}(\theta; u) \leq  \mathcal{R}_{d, \rho}^{c,b}(\theta; 1)  \mathcal{S}_{d, \rho}^{c,b}(\theta; u)$. Therefore, we only need to verify that $\mathcal{S}_{d, \rho}^{c,b}(\theta; 1)< \infty$. Since $\mathbf{m} > 0$, we consider $\theta \in (0,1)$ such that $\Psi_{d, \rho}$ is increasing and negative on $[\theta, 1)$. Note that \eqref{eq02} implies that
\begin{align}
\mathcal{Q}_{d, \rho}^{c,b}(\theta; x) \geq - \frac{\Psi_{d, \rho}(\theta)}{\varsigma} \int_{\theta}^{x} \frac{{\rm d} w}{w (1-w)} =  - \frac{\Psi_{d, \rho}(\theta)}{\varsigma} \ln \left( \frac{x(1-\theta)}{(1-x)\theta}\right),
\end{align}
\noindent and thus, by choosing $\theta \in (0,1)$ such that $\Psi_{d, \rho}(\theta) / \varsigma < 1$,
\begin{align}
\mathcal{S}_{d, \rho}^{c,b}(\theta; 1) \leq \int_{\theta}^{1} \left( \frac{x(1-\theta)}{(1-x)\theta}\right)^{\frac{\Psi_{d, \rho}(\theta)}{\varsigma}} {\rm d} x < \infty.
\end{align}
\end{proof}

\begin{proof}[Proof of Lemma \ref{Pro3}]
Since $\Psi_{0, \rho}(w)  = \Psi_{d,\rho}(w) - d(1-w)$, for $w \in [0,1]$, we have 
\begin{eqnarray} \label{meq1}
\mathcal{Q}_{0, \rho}^{c,b}(u;x) = \int_{u}^{x} \frac{\Psi_{0, \rho}(w)}{w \Phi_{c,b}(w)} {\rm d}w = \mathcal{Q}_{d, \rho}^{c,b}(u;x) - \int_{u}^{x} \frac{d(1-w)}{w \Phi_{c,b}(w)} {\rm d}w,
\end{eqnarray}
\noindent for $u \in (0,1)$  and  $x \in [u,1)$. Since $\varsigma < 0$, (\ref{eq02}) implies
\begin{eqnarray} \label{meq1b}
\frac{d}{c} \ln \left(\frac{x}{u} \right)\leq \int_{u}^{x} \frac{d(1-w)}{w \Phi_{c,b}(w)} {\rm d}w  \leq -\frac{d}{\varsigma} \ln \left(\frac{x}{u} \right), \hspace*{5mm} \text{for} \hspace*{3mm} u \in (0,1)  \hspace*{2mm} \text{and} \hspace*{2mm} x \in [u,1).
\end{eqnarray}
\noindent From the above inequality and (\ref{meq1}), one concludes the first claim. 

Next, we assume that $b=0$ and observe that $\Psi_{c, \rho}(w)  = \Psi_{d,\rho}(w) + (c-d)(1-w)$, for $w \in [0,1]$. Hence 
\begin{eqnarray*} 
\mathcal{Q}_{c, \rho}^{c,0}(u;x) = \int_{u}^{x} \frac{\Psi_{c, \rho}(w)}{w \Phi_{c,0}(w)} {\rm d}w = \int_{u}^{x} \frac{\Psi_{d,\rho}(w)}{w \Phi_{c,0}(w)} {\rm d}w + \frac{c-d}{c}\ln \left(\frac{x}{u} \right), \hspace*{4mm} \text{for} \hspace*{3mm} u \in (0,1)  \hspace*{2mm} \text{and} \hspace*{2mm} x \in [u,1),
\end{eqnarray*}
which clearly implies the second claim.  
\end{proof}


\begin{proof}[Proof of Lemma \ref{lemma1Red}]
It follows from \eqref{meq1} that, $0 < \theta < x < 1$,
\begin{align} \label{Ident11}
\mathcal{Q}_{d, \rho}^{c, b}(\theta; x) = \mathcal{Q}_{0, \rho}^{c, b}(\theta; x) + \mathcal{Q}_{d, 0}^{c, b}(\theta; x).
\end{align}
\noindent  On the other hand, \eqref{cebolla} and \eqref{eq02}, we have that
\begin{align} \label{Ident12}
 \widetilde{\mathcal{Q}}_{\rho}^{c, b}(\theta; x) \leq \mathcal{Q}_{0, \rho}^{c, b}(\theta; x) \leq  -\frac{\varsigma}{c}   \widetilde{\mathcal{Q}}_{\rho}^{c, b}(\theta; x),
\end{align}
\noindent Since, by \eqref{meq1b},
\begin{align} \label{Ident13}
\lim_{x \uparrow 1} \mathcal{Q}_{d,0}^{c, b}(\theta; x) = 0,
\end{align}
\noindent our first claim follows from \eqref{Ident11} and \eqref{Ident12}. 
Note that, by \eqref{cebolla}, $\Phi_{c,b}(w)\sim-\varsigma(1-w)$ as $w\uparrow 1$. Hence again from \eqref{cebolla}, For all $\varepsilon >0$, there exists $\theta^{\prime} \in (0,1)$ such that for $\theta^{\prime} < \theta < x < 1$ such that
\begin{align*}
\frac{1}{1-\varepsilon} \widetilde{\mathcal{Q}}_{\rho}^{c, b}(\theta; x) \leq \mathcal{Q}_{0, \rho}^{c, b}(\theta; x)  \leq \frac{1}{1+\varepsilon}\widetilde{\mathcal{Q}}_{\rho}^{c, b}(\theta; x),
\end{align*}
\noindent and our second claim follows from \eqref{Ident11} and \eqref{Ident13}. 
\end{proof}

\begin{proof}[Proof of Lemma \ref{lemma2Red}]
It follows from \eqref{Ident11} (with $b=0$) and \eqref{cebolla} that, for $0 < \theta < x < 1$,
\begin{align*}
\mathcal{R}_{d, \rho}^{c, 0}(\theta; x)  & = \int_{\theta}^{x} \frac{1}{c(1-u)} e^{\mathcal{Q}_{0, \rho}^{c, 0}(\theta; u) + \mathcal{Q}_{d,0}^{c, 0}(\theta; u) } {\rm d} u \nonumber \\
& = \exp\left(\frac{1}{c}\sum_{i\ge 1}\frac{\overline{\pi}_i}{i}\theta^i\right) \int_{\theta}^{x} \frac{u^{\frac{d}{c}}}{\theta^{\frac{d}{c}}c(1-u)} \exp\left(-\frac{1}{c}\sum_{i\ge 1}\frac{\overline{\pi}_i u^i}{i}\right) {\rm d} u, 
\end{align*}
\noindent which implies our claim. 
\end{proof}

Finally, we study the boundaries $0$ and $1$ of the operator $\mathscr{L}^{{\rm dual}}$ and prove Lemmas \ref{lemma14} and \ref{lemma15}. We will use Feller's boundary classification for which we refer to \cite[Chapter 8, Section 1]{Et1986} (or \cite[Chapter 23]{Kalla2002}). Following the presentation of \cite[Chapter 8, Section 1]{Et1986}, for $x \in [0,1]$, note, after some simple interchanges of order integration, that
\begin{eqnarray*}
\mathcal{I}_{d, \rho}^{c,b}(\theta; x) = \int_{\theta}^{x} \mathcal{R}_{d, \rho}^{c,b}(\theta; u)\mathcal{S}_{d, \rho}^{c,b}(\theta;  {\rm d} u) \qquad \text{and} \qquad \mathcal{E}_{d, \rho}^{c,b}(\theta; x) = \int_{\theta}^{x} \mathcal{S}_{d, \rho}^{c,b}(\theta; u) \mathcal{R}_{d, \rho}^{c,b}(\theta; {\rm d} u).
\end{eqnarray*}
\noindent Then, for $i=0,1$, the boundary $i$ is said to be
\begin{table}[!htbp]
\centering
\begin{tabular}{lcccc}
regular  & if  & $\mathcal{I}_{d, \rho}^{c,b}(\theta; i) < \infty$  & and & $\mathcal{E}_{d, \rho}^{c,b}(\theta; i) < \infty$,  \\
exit     & if  & $\mathcal{I}_{d, \rho}^{c,b}(\theta; i) < \infty$  & and & $\mathcal{E}_{d, \rho}^{c,b}(\theta; i) = \infty$, \\
entrance & if  & $\mathcal{I}_{d, \rho}^{c,b}(\theta; i) = \infty$  & and & $\mathcal{E}_{d, \rho}^{c,b}(\theta; i) < \infty$, \\
natural  & if  & $\mathcal{I}_{d, \rho}^{c,b}(\theta; i) = \infty$  & and & $\mathcal{E}_{d, \rho}^{c,b}(\theta; i) = \infty$.
\end{tabular}
\end{table}

\noindent Regular and exit boundaries are said to be accesible; entrance and natural boundaries are said to be inaccesible. We also introduce the so-called speed measure $M_{d, \rho}^{c,b}$ by letting
\begin{eqnarray} \label{SpeedM}
M_{d, \rho}^{c,b}([u,v]) = \int_{u}^{v} \frac{1}{2x\Phi_{c,b}(x)} \exp \left( \int_{\theta}^{x} \frac{\Psi_{d, \rho}(y)}{y\Phi_{c,b}(y)} {\rm d}y \right) {\rm d} x, \hspace*{4mm} \text{for} \hspace*{2mm} [u,v] \subset (0,1),
\end{eqnarray}
\noindent where $\theta \in (0,1)$ is an arbitrary point. If $i$ is regular, the boundary behaviour depends on the value of $M_{d, \rho}^{c,b}(\{i\})$; see \cite[Theorem 23.12]{Kalla2002} or \cite[Definition VII.3.11]{Revuz1999}. Thus, in this case, the boundary $i$ is 
\begin{table}[!htbp]
\centering
\begin{tabular}{lcl}
regular absorbing  & if  & $M_{d, \rho}^{c,b}(\{ i\}) = \infty$,  \\
regular slowly reflecting & if  & $M_{d, \rho}^{c,b}(\{ i\}) \in (0, \infty)$, \\
regular (instantaneously) reflecting & if  & $M_{d, \rho}^{c,b}(\{ i\}) = 0$.
\end{tabular}
\end{table}
~\\
\begin{proof}[Proof of Lemma \ref{lemma14}]
By condition (\ref{main}), $\lim_{u \rightarrow 0} \Phi_{c, b}(u) = c$ and in particular, $\Phi_{c, b}$ is bounded away from zero. 

We start with the proof of (i) and suppose that $d > c$. Since $\Phi_{c, b}$ is bounded away from zero, for any $0< \varepsilon_{1} < d/c -1$, there exist $\theta_{1} \in (0,\theta)$ and a constant $C_{1}>0$ such that (\ref{eq35}) implies that
\begin{eqnarray*}
\mathcal{I}_{d, \rho}^{c,b}(\theta_{1}; 0) \geq \int_{0}^{\theta_{1}} \int_{0}^{x} \frac{1}{x\Phi_{c,b}(x)} \left(\frac{x}{y} \right)^{d/c-\varepsilon_{1}} {\rm d}y {\rm d}x \geq C_{1} \int_{0}^{\theta_{1}} \int_{0}^{x} \frac{1}{x}  \left(\frac{x}{y} \right)^{d/c-\varepsilon_{1}} {\rm d}y {\rm d}x = \infty.
\end{eqnarray*} 

\noindent On the other hand, since $d >0$ and $\Phi_{c, b}$ is bounded away from zero, for any $0 < \varepsilon_{2} < d/c$, there exist $\theta_{2} \in (0,\theta)$ and a constant $C_{2}>0$ such that (\ref{eq35}) implies that
\begin{eqnarray} \label{eq28}
\mathcal{E}_{d, \rho}^{c,b}(\theta_{2}; 0) \leq \int_{0}^{\theta_{2}} \int_{0}^{x}\frac{1}{y\Phi_{c,b}(y)} \left(\frac{x}{y} \right)^{-d/c+\varepsilon_{2}}  {\rm d}y {\rm d}x \leq C_{2} \int_{0}^{\theta_{3}} \int_{0}^{x}\frac{1}{y} \left(\frac{x}{y} \right)^{-d/c+\varepsilon_{2}}  {\rm d}y {\rm d}x < \infty.
\end{eqnarray}

\noindent This shows that $0$ is an entrance boundary if $d > c$ which concludes the proof of (i). 

Next, we prove (ii) and suppose that $0 \leq d < c$. Since $\Phi_{c, b}$ is bounded away from zero, for any $0 < \varepsilon_{3} < 1-d/c$, there exist $\theta_{3} \in (0,\theta)$ and a constant $C_{3} >0$ such that (\ref{eq35}) implies that
\begin{eqnarray*}
\mathcal{I}_{d, \rho}^{c,b}(\theta_{3}; 0) \leq \int_{0}^{\theta_{3}} \int_{0}^{x} \frac{1}{x\Phi_{c,b}(x)} \left(\frac{x}{y} \right)^{d/c+\varepsilon_{3}} {\rm d}y {\rm d}x \leq C_{3} \int_{0}^{\theta_{3}} \int_{0}^{x} \frac{1}{x}  \left(\frac{x}{y} \right)^{d/c+\varepsilon_{3}} {\rm d}y {\rm d}x < \infty,
\end{eqnarray*} 

\noindent If $0 < d < c$, (\ref{eq28}) implies that $\mathcal{E}_{d, \rho}^{c,b}(\theta; 0) < \infty $, for some $\theta \in (0,1)$. Thus, $0$ is a regular boundary if $0 < d < c$. Since $d >0$ and $\lim_{u \rightarrow 0} \Psi_{d, \rho}(u) = d$, we have that $\Psi_{d, \rho}$ is bounded away from zero. Then, for $[u,v] \subset (0,1)$, there exists a constant $C_{4}>0$ such that
\begin{eqnarray} \label{regularR}
M_{d, \rho}^{c,b}([u, v]) \leq C_{4} \int_{u}^{v} \frac{\Psi_{d, \rho}(x)}{x\Phi_{c,b}(x)} \exp \left( \int_{\theta}^{x} \frac{\Psi_{d, \rho}(y)}{y\Phi_{c,b}(y)} {\rm d}y \right) {\rm d} x = C_{4} ( e^{\mathcal{Q}_{d, \rho}^{c,b}(\theta; v)} - e^{\mathcal{Q}_{d, \rho}^{c,b}(\theta; u)}).
\end{eqnarray} 

\noindent It is not difficult to see that $\lim_{x \rightarrow 0} \mathcal{Q}_{d, \rho}^{c,b}(\theta; x) = -\infty$ and thus, $M_{d, \rho}^{c,b}(\{0\}) = 0$, i.e., $0$ is regular reflecting. If $d=0$, then $\Psi_{d,\rho}(u) \leq 0$ for all $u \in [0,1]$. Thus, since $\Phi_{c, b}$ is bounded away from zero, there is a constant $C_{5} >0$ such that $\mathcal{E}_{d, \rho}^{c,b}(\theta; 0) \geq C_{5} \int_{0}^{\theta} \int_{0}^{x}  y^{-1} {\rm d}y {\rm d}x = \infty$. Hence $0$ is an exit boundary if $d=0$ which concludes with the proof of (ii).

Finally, we prove (iii), and thus, assume that $d=c$. Note that 
\[
\mathcal{I}_{d, \rho}^{c,b}(\theta; x) = \int_{x}^{\theta} (\mathcal{S}_{d, \rho}^{c,b}(\theta; u) - \mathcal{S}_{d, \rho}^{c,b}(\theta;x)) \mathcal{R}_{d, \rho}^{c,b}(\theta;  {\rm d} u),\qquad \textrm{ for }\quad x \in [0, \theta].
\] 
\noindent If $\mathcal{S}_{c, \rho}^{c, b}(\theta; 0) = - \infty$, for some $\theta \in (0,1)$, then $\mathcal{S}_{c, \rho}^{c,b}(\theta; u)- \mathcal{S}_{c, \rho}^{c,b}(\theta; 0) = \infty$, for all $u \in (0,1)$. Hence $\mathcal{I}_{c, \rho}^{c,b}(\theta; 0) = \infty$ since $\mathcal{R}_{c, \rho}^{c,b}(\theta;  {\rm d} u)$ is a strictly positive measure on $(0, \theta)$. Since $d= c$, (\ref{eq28}) implies that $\mathcal{E}_{c, \rho}^{c,b}(\theta; 0) < \infty$, for some $\theta \in (0,1)$, and thus, $0$ is an entrance boundary which proves the first part in (iii). Suppose now that $|\mathcal{S}_{c, \rho}^{c, b}(\theta; 0)|<  \infty$. Since $d = c$ and $\Phi_{c, b}$ is bounded away from zero, for any $0 < \varepsilon_{4} < 1$, there are $\theta_{4} \in (0,\theta)$  and a constant $C_{6} >0$ such that (\ref{eq35}) implies that $|\mathcal{R}_{c, \rho}^{c,b}(\theta;0) | \leq C_{6} \int_{0}^{\theta_{4}} x^{\varepsilon_{4}} {\rm d} x < \infty$. Therefore, by noticing that $\mathcal{I}_{d, \rho}^{c,b}(\theta; u) + \mathcal{E}_{d, \rho}^{c,b}(\theta; u) = \mathcal{R}_{d, \rho}^{c,b}(\theta; u)\mathcal{S}_{d, \rho}^{c,b}(\theta; u)$, for $u \in [0,1]$, we deduce that $0$ is a regular boundary. Moreover, one can deduce from (\ref{regularR}) that $0$ is regular reflecting. This concludes with the proof of the second part in (iii). 
\end{proof}

\begin{proof}[Proof of Lemma \ref{lemma15}]
We claim that if $\varsigma < 0$, then $\mathcal{I}_{d, \rho}^{c,b}(\theta; 1) < \infty$, for some $\theta \in (0,1)$. To see this, note that $\lim_{u \rightarrow 1} \Psi_{d,\rho}(u) = 0$ and choose $\theta \in (0,1)$ such that $- \varepsilon < \Psi_{d,\rho}(u) < \varepsilon$, for $u \in [\theta,1]$. Then, (\ref{eq02}) implies that $\mathcal{Q}_{d, \rho}^{c, b}(x; y) \geq (\varepsilon/\varsigma)\ln(\frac{y}{1-y} \frac{1-x}{x})$, for all $\theta \leq x \leq y <1$. Thus, one can find a constant $C >0$ such that  $\mathcal{I}_{d, \rho}^{c, b}(\theta; 1) \leq C \int_{\theta}^{1} \int_{\theta}^{u} (1-u)^{\varepsilon/\varsigma} (1- x)^{-1-\varepsilon/\varsigma} {\rm d}x {\rm d} u < \infty$. 

Therefore, our claim in Lemma \ref{lemma15} follows from Feller's tests and the previous remark. 
\end{proof}


\paragraph{Acknowledgements.}
GB thanks Uppsala University where most of the work was developed while he was supporting by the Knut and Alice Wallenberg Foundation, a grant from the Swedish Research Council and The Swedish Foundations' starting grant from Ragnar S\"oderbergs Foundation. GB  also acknowledges to the University of G\"ottingen and the support from the DFG-SPP Priority Programme 1590, {\sl Probabilistic Structures in Evolution}  where this work was initiated.   Part of this work was done whilst JCP was on sabbatical leave holding a David
Parkin Visiting Professorship at the University of Bath, he gratefully acknowledges the kind
hospitality of the Department and University.


\begin{thebibliography}{10}

\bibitem{Aldous1999}
D.~J. Aldous, \emph{Deterministic and stochastic models for coalescence
  (aggregation and coagulation): a review of the mean-field theory for
  probabilists}, Bernoulli \textbf{5} (1999), no.~1, 3--48. \MR{1673235}

\bibitem{Alk2007}
R.~Alkemper and M.~Hutzenthaler, \emph{Graphical representation of some duality
  relations in stochastic population models}, Electron. Comm. Probab.
  \textbf{12} (2007), 206--220. \MR{2320823}

\bibitem{Ath2004}
K.~B. Athreya and P.~E. Ney, \emph{Branching processes}, Dover Publications,
  Inc., Mineola, NY, 2004, Reprint of the 1972 original [Springer, New York;
  MR0373040]. \MR{2047480}

\bibitem{Ban2016}
V.~Bansaye, S.~M\'{e}l\'{e}ard, and M.~Richard, \emph{Speed of coming down from
  infinity for birth-and-death processes}, Adv. in Appl. Probab. \textbf{48}
  (2016), no.~4, 1183--1210. \MR{3595771}

\bibitem{Steffanson2015}
J.~E. Bj\"{o}rnberg and S.~O. Stef\'{a}nsson, \emph{Random walk on random
  infinite looptrees}, J. Stat. Phys. \textbf{158} (2015), no.~6, 1234--1261.
  \MR{3317412}

\bibitem{MAC09}
M.~E. Caballero, A.~Lambert, and G.~Uribe~Bravo, \emph{Proof(s) of the
  {L}amperti representation of continuous-state branching processes}, Probab.
  Surv. \textbf{6} (2009), 62--89. \MR{2592395}

\bibitem{Chen1997}
R.-R. Chen, \emph{An extended class of time-continuous branching processes}, J.
  Appl. Probab. \textbf{34} (1997), no.~1, 14--23. \MR{1429050}

\bibitem{Et1986}
S.~N. Ethier and T.~G. Kurtz, \emph{Markov processes}, Wiley Series in
  Probability and Mathematical Statistics: Probability and Mathematical
  Statistics, John Wiley \& Sons, Inc., New York, 1986, Characterization and
  convergence. \MR{838085}

\bibitem{Feller1966}
W.~Feller, \emph{An introduction to probability theory and its applications.
  {V}ol. {II}}, John Wiley \& Sons, Inc., New York-London-Sydney, 1966.
  \MR{0210154}

\bibitem{Cle2017}
C.~Foucart, \emph{Continuous-state branching processes with competition:
  duality and reflection at infinity}, Electron. J. Probab. \textbf{24} (2019),
  Paper No. 33, 38. \MR{3940763}

\bibitem{Pa20192}
A.~{Gonz{\'a}lez Casanova}, G.~Kersting, and J.~C. {Pardo}, \emph{{Branching
  processes with interactions: the critical cooperative regime}}, Work in
  progress (2023+).

\bibitem{Pa2017}
A.~Gonz\'{a}lez~Casanova, J.~C. Pardo, and J.~L. P\'{e}rez, \emph{Branching
  processes with interactions: subcritical cooperative regime}, Adv. in Appl.
  Probab. \textbf{53} (2021), no.~1, 251--278. \MR{4232756}

\bibitem{PaMi2019}
A.~{González Casanova}, V.~{Miró Pina}, and J.~C. Pardo, \emph{The
  wright–fisher model with efficiency}, Theoretical Population Biology
  \textbf{132} (2020), 33 -- 46.

\bibitem{Harris2002}
T.~E. Harris, \emph{The theory of branching processes}, Dover Phoenix Editions,
  Dover Publications, Inc., Mineola, NY, 2002, Corrected reprint of the 1963
  original [Springer, Berlin; MR0163361 (29 \#664)]. \MR{1991122}

\bibitem{Jagers1994}
P.~Jagers, \emph{Towards dependence in general branching processes}, Classical
  and modern branching processes ({M}inneapolis, {MN}, 1994), IMA Vol. Math.
  Appl., vol.~84, Springer, New York, 1997, pp.~127--139. \MR{1601717}

\bibitem{Jan2014}
S.~Jansen and N.~Kurt, \emph{On the notion(s) of duality for {M}arkov
  processes}, Probab. Surv. \textbf{11} (2014), 59--120. \MR{3201861}

\bibitem{Ka82}
A.~V. Kalinkin, \emph{On the probability of the extinction of branching process
  with interaction of particles}, Teor. Veroyatnost. i Primenen. \textbf{27}
  (1982), no.~1, 201--205.

\bibitem{Ka99}
A.~V. Kalinkin, \emph{Final probabilities of a branching process with interaction of
  particles, and the epidemic process}, Teor. Veroyatnost. i Primenen.
  \textbf{43} (1998), no.~4, 773--780. \MR{1692393}

\bibitem{Ka01}
A.~V. Kalinkin, \emph{Exact solutions of the {K}olmogorov equations for critical
  branching processes with two complexes of particle interaction}, Uspekhi Mat.
  Nauk \textbf{56} (2001), no.~3(339), 173--174. \MR{1859738}

\bibitem{Ka02}
A.~V. Kalinkin, \emph{On the probability of the extinction of a branching process with
  two complexes of particle interaction}, Teor. Veroyatnost. i Primenen.
  \textbf{46} (2001), no.~2, 376--381. \MR{1968693}

\bibitem{Ka02-1}
A.~V. Kalinkin, \emph{Markov branching processes with interaction}, Uspekhi Mat. Nauk
  \textbf{57} (2002), no.~2(344), 23--84. \MR{1918194}

\bibitem{Kalla2002}
O.~Kallenberg, \emph{Foundations of modern probability}, second ed.,
  Probability and its Applications (New York), Springer-Verlag, New York, 2002.
  \MR{1876169}

\bibitem{Taylor1981}
S.~Karlin and H.~M. Taylor, \emph{A second course in stochastic processes},
  Academic Press, Inc. [Harcourt Brace Jovanovich, Publishers], New
  York-London, 1981. \MR{611513}

\bibitem{Kingman1982}
J.~F.~C. Kingman, \emph{The coalescent}, Stochastic Process. Appl. \textbf{13}
  (1982), no.~3, 235--248. \MR{671034}

\bibitem{Kyprianou2017}
A.~E. Kyprianou, S.~W. Pagett, T.~Rogers, and J.~Schweinsberg, \emph{A phase
  transition in excursions from infinity of the ``fast''
  fragmentation-coalescence process}, Ann. Probab. \textbf{45} (2017), no.~6A,
  3829--3849. \MR{3729616}

\bibitem{La2005}
A.~Lambert, \emph{The branching process with logistic growth}, Ann. Appl.
  Probab. \textbf{15} (2005), no.~2, 1506--1535. \MR{2134113}

\bibitem{Lamb2008}
A.~Lambert, \emph{Population dynamics and random genealogies}, Stoch. Models
  \textbf{24} (2008), no.~suppl. 1, 45--163. \MR{2466449}
\bibitem{Leman}
H.~Leman and J.~C.~Pardo, \emph{Extinction time of logistic branching processes in a  {B}rownian environment}, ALEA Lat. Am. J. Probab. Math. Stat., \textbf{18} (2021), no.2, 1859--1890. \MR{4332223}
    
\bibitem{Lindvall1992}
T.~Lindvall, \emph{Lectures on the coupling method}, Wiley Series in
  Probability and Mathematical Statistics: Probability and Mathematical
  Statistics, John Wiley \& Sons, Inc., New York, 1992, A Wiley-Interscience
  Publication. \MR{1180522}

\bibitem{MT93}
S.~P. Meyn  and R.~L. Tweedie, \emph{Stability of Markovian processes III: Foster-Lyapunov criteria for continuous-time processes.}, Adv. in Appl.
  Probab. \textbf{25} (1993), 518--548. 

\bibitem{Norris1998}
J.~R. Norris, \emph{Markov chains}, Cambridge Series in Statistical and
  Probabilistic Mathematics, vol.~2, Cambridge University Press, Cambridge,
  1998, Reprint of 1997 original. \MR{1600720}

\bibitem{Pakes2007}
A.~G. Pakes, \emph{Extinction and explosion of nonlinear {M}arkov branching
  processes}, J. Aust. Math. Soc. \textbf{82} (2007), no.~3, 403--428.
  \MR{2318173}

\bibitem{Revuz1999}
D.~Revuz and M.~Yor, \emph{Continuous martingales and {B}rownian motion}, third
  ed., Grundlehren der Mathematischen Wissenschaften [Fundamental Principles of
  Mathematical Sciences], vol. 293, Springer-Verlag, Berlin, 1999. \MR{1725357}

\bibitem{Sturm2015}
A.~Sturm and J.~M. Swart, \emph{A particle system with cooperative branching
  and coalescence}, Ann. Appl. Probab. \textbf{25} (2015), no.~3, 1616--1649.
  \MR{3325283}

\bibitem{Temme75}
N.~M. Temme, \emph{Uniform asymptotic expansions of the incomplete gamma
  functions and the incomplete beta function}, Math. Comp. \textbf{29} (1975),
  no.~132, 1109--1114. \MR{387674}
\end{thebibliography}

\providecommand{\bysame}{\leavevmode\hbox to3em{\hrulefill}\thinspace}
\providecommand{\MR}{\relax\ifhmode\unskip\space\fi MR }
\providecommand{\MRhref}[2]{%
  \href{http://www.ams.org/mathscinet-getitem?mr=#1}{#2}
}
\providecommand{\href}[2]{#2}

\end{document}